\newcommand{\diff}{\mt{d}}
\newcommand{\mt}[1]{{\text{\rm #1}}}  
\newcommand{\comment}[1]{}            
\newcommand{\set}[1]{\{#1\}}
\newcommand{\bigset}[1]{\big\{#1\big\}}
\newcommand{\Bigset}[1]{\Big\{#1\Big\}}
\newcommand{\Biggset}[1]{\Bigg\{#1\Bigg\}}
\newcommand{\suchthat}{\;|\;}         
\newcommand{\bigsuchthat}{\;\big|\;}
\newcommand{\Bigsuchthat}{\;\Big|\;}
\newcommand{\restrict}{\,|}           
\newcommand{\compose}{\circ}          
\newcommand{\define}{\mathrel{\rm:=}}
\newcommand{\without}{\mathord{\setminus}}
\newcommand{\R}{\mathbb{R}}           
\newcommand{\N}{\mathbb{N}}           
\newcommand{\leer}{\varnothing}       
\newcommand{\mfbd}{\partial}          
\newcommand{\RP}{\mathbb{RP}}         
\newcommand{\bigeval}[2]{\big\langle #1,#2 \big\rangle}
\newcommand{\Bigeval}[2]{\Big\langle #1,#2 \Big\rangle}
\newcommand{\abs}[1]{\lvert#1\rvert}  
\newcommand{\bigabs}[1]{\big\lvert#1\big\rvert}
\newcommand{\Bigabs}[1]{\Big\lvert#1\Big\rvert}
\newcommand{\biggabs}[1]{\bigg\lvert#1\bigg\rvert}
\newcommand{\Biggabs}[1]{\Bigg\lvert#1\Bigg\rvert}
\newcommand{\norm}[1]{\lVert#1\rVert} 
\newcommand{\bignorm}[1]{\big\lVert#1\big\rVert}
\newcommand{\Bignorm}[1]{\Big\lVert#1\Big\rVert}
\newcommand{\eps}{\varepsilon}        
\newcommand{\ooi}[2]{\mathord{\left]#1,#2\right[}}  
\newcommand{\oci}[2]{\left]#1,#2\right]}  
\newcommand{\coi}[2]{\left[#1,#2\right[}  
\newcommand{\cci}[2]{\left[#1,#2\right]}  
\newcommand{\supp}{\mt{supp}}
\theoremstyle{definition}
\newtheorem{remark}{Remark}[section]
\newtheorem{remarks}[remark]{Remarks}
\newtheorem{void}[remark]{}
\newtheorem{definition}[remark]{Definition}
\newtheorem{facts}[remark]{Facts}
\newtheorem{notations}[remark]{Notations}
\newtheorem{examples}[remark]{Examples}
\newtheorem{conventions}[remark]{Conventions}
\newtheorem{exhaustions}[remark]{Compact exhaustions}
\newtheorem{semicont}[remark]{Upper and lower semicontinuity}
\newtheorem{yamabe}[remark]{Yamabe constant and $\sigma$-invariant}
\theoremstyle{plain}
\newtheorem{theorem}[remark]{Theorem}
\newtheorem{lemma}[remark]{Lemma}
\newtheorem{factit}[remark]{Fact}
\newtheorem{factsit}[remark]{Facts}
\newtheorem{exampleit}[remark]{Example}
\newtheorem{corollary}[remark]{Corollary}
\newtheorem{conjecture}[remark]{Conjecture}
\newcommand{\Ric}{\mathord{\mt{Ric}}}
\newcommand{\Sym}{\mathord{\mt{Sym}}}
\newcommand{\divergence}{\mathord{\mt{div}}}
\newcommand{\GL}{\mt{GL}}
\renewcommand{\bar}{\overline}
\newcommand{\laplace}{\mathord{\Delta}}
\newcommand{\scal}{\mt{scal}}
\newcommand{\pr}{\mt{pr}}
\newcommand{\BerardBergery}{B\'{e}rard Bergery}
\newcommand{\Yam}{Y}
\newcommand{\Yamfunc}{\mt{$E$}}
\newcommand{\Yaminfty}{\bar{Y}}
\DeclareMathOperator{\Metr}{Metr}
\DeclareMathOperator{\vol}{vol}
\newcommand{\graph}{\mt{graph}}
\newcommand{\F}[2]{\tfrac{\diff\mu_{#1}}{\diff\mu_{#2}}}
\newcommand{\Tamma}{\tilde{\Gamma}}
\begin{document}

\title{The Yamabe constant on noncompact manifolds}

\author{Nadine Gro{\ss}e}
\address{Department of Mathematics, University of Leipzig}
\email{Nadine.Grosse@math.uni-leipzig.de}

\author{Marc Nardmann}
\address{Department of Mathematics, University of Hamburg}
\email{Marc.Nardmann@math.uni-hamburg.de}

\thanks{This work was done during our joint stay at the Hausdorff Institute for Mathematics. We thank the organizers for the hospitality.}

\begin{abstract}
We prove several facts about the Yamabe constant of Riemannian metrics on general noncompact manifolds and about S.~Kim's closely related ``Yamabe constant at infinity''. In particular we show that the Yamabe constant depends continuously on the Riemannian metric with respect to the fine $C^2$-topology, and that the Yamabe constant at infinity is even locally constant with respect to this topology. We also discuss to which extent the Yamabe constant is continuous with respect to coarser topologies on the space of Riemannian metrics.
\end{abstract}

\maketitle

\section{Introduction}

For a nonempty manifold $M$ of dimension $n\geq3$, the \emph{Yamabe map} \comment{$\Yam=$}$\Yam_M$ assigns to every Riemannian metric $g$ on $M$ a number $\Yam_M(g)\in\R\cup\set{-\infty}$, the \emph{Yamabe constant of $g$}, as follows. For each compactly supported not identically vanishing function $v\in C^\infty(M,\R_{\geq0})$, one defines
\[
\Yamfunc_g(v) \define \frac{1}{\norm{v}_{L^p(g)}^2}\int_M\Big(a_n\abs{\diff v}_g^2 +\scal_g\,v^2\Big)\mathop{\diff\mu_g} \in\R ,
\]
where $p=p_n\define\frac{2n}{n-2}$ and $a_n\define\frac{4(n-1)}{n-2}$ and $\scal_g$ denotes the scalar curvature of $g$. The Yamabe constant of $g$ is
\[
\Yam_M(g) \define \inf\Bigset{\Yamfunc_g(v) \Bigsuchthat v\in C^\infty_c(M,\R_{\geq0})\without\set{0}} \in\R\cup\set{-\infty} .
\]
$\Yam_M(g)$ depends only on the conformal class of $g$. The \emph{$\sigma$-invariant of $M$} is
\[
\sigma(M)\define\sup\bigset{\Yam_M(g) \bigsuchthat g\in\Metr(M)} ,
\]
where $\Metr(M)$ denotes the set of Riemannian metrics on $M$. Every metric $g$ on an $n$-manifold satisfies
\[
\Yam_M(g) \leq \sigma(S^n) = \Yam_{S^n}(g_\mt{st}) = n(n-1)\vol(S^n,g_\mt{st})^{2/n} ,
\]
where $g_\mt{st}$ is the standard metric on the $n$-sphere $S^n$. (See Section \ref{preliminaries} for details and references.)

\smallskip
In the case when $M$ is compact without boundary, the Yamabe constant and $\sigma$-invariant have been studied in hundreds of articles; cf.\ e.g.\ \cite{AkutagawaIshidaLeBrun, AmmannDahlHumbert, BrayNeves, BrendleMarques, PeteanRuiz} and the reference lists therein. Several of these works involve also Yamabe constants of noncompact manifolds as a tool. Some articles where the noncompact case has been investigated for its own sake are \cite{Akutagawa, AkutagawaBotvinnik2003, Grosse09, Grosse11, Kim1996, Kim1997, Kim2000}. In most cases the focus was on special classes of noncompact manifolds and/or metrics, e.g. $\R\times N$ with compact $N$, coverings of closed manifolds, or manifolds of bounded geometry. The aim of the present article is to state and prove several facts which hold for all manifolds and metrics.

\smallskip
One of these results is that the functional $\Yam_M$ is continuous in a suitable sense. In the case of compact $M$, this was proved by {\BerardBergery} \cite[Proposition 7.2]{BerardBergery}. He stated only continuity with respect to the $C^\infty$-topology on the space of metrics, but the proof works obviously even for the (coarser) $C^2$-topology; in this form the result is also given in \cite[Proposition 4.31]{Besse}. The proof is not completely trivial, because of the infimum that occurs in the definition of $\Yam_M$. But it is still reasonably straightforward, and the application of Moser's lemma suggested in both references is not really necessary.

\smallskip
In the present article, we discuss the continuity of $\Yam_M$ on noncompact manifolds $M$, where one has to distinguish between the usual (metrizable) \emph{compact-open} $C^2$-topology and the \emph{fine} (also known as \emph{strong} or \emph{Whitney}) $C^2$-topology, which is neither metrizable nor connected; cf.\ Section \ref{topologies} for a review. One can also consider another natural topology on $\Metr(M)$, which we call the \emph{uniform $C^k$-topology}; see Section \ref{topologies}. If $M$ is noncompact, this topology is strictly finer than the compact-open $C^k$-topology and strictly coarser than the fine $C^k$-topology.

\medskip
A straightforward generalization of \BerardBergery's arguments yields the following result:
\begin{theorem} \label{cocontinuity}
Let $M$ be a nonempty manifold of dimension $\geq3$. Then $\Yam_M$ is upper semicontinuous with respect to the compact-open $C^2$-topology on $\Metr(M)$. If $M$ is compact or $\Yam_M(g)=-\infty$, then $\Yam_M$ is continuous at $g$ with respect to the compact-open $C^2$-topology. If $M$ is noncompact and $\Yam_M(g)>-\infty$, then $\Yam_M$ is not continuous at $g$ for any compact-open $C^k$-topology on $\Metr(M)$ with $k\in\N\cup\set{\infty}$.
\end{theorem}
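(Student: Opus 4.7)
I would treat the three assertions separately.

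For \textbf{upper semicontinuity}, fix $\alpha>\Yam_M(g)$ and choose a compactly supported $v$ with $\Yamfunc_g(v)<\alpha$. On the compact set $\supp(v)$, each ingredient of $\Yamfunc_{g'}(v)$ depends continuously on $g'$ in the $C^2$-topology---the terms $|\diff v|_{g'}^2$ and $\diff\mu_{g'}$ already in $C^0$, and $\scal_{g'}$ in $C^2$. Hence $\Yamfunc_{g'}(v)<\alpha$ for all $g'$ in a compact-open $C^2$-neighbourhood of $g$, and $\Yam_M(g')\leq\Yamfunc_{g'}(v)<\alpha$ follows. The case $\Yam_M(g)=-\infty$ of the continuity statement is then immediate by letting $\alpha\to-\infty$.

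For the \textbf{compact case}, only lower semicontinuity remains. Given $g_j\to g$ in $C^2$, I would pick $v_j$ with $\norm{v_j}_{L^p(g)}=1$ and $\Yamfunc_{g_j}(v_j)\leq\Yam_M(g_j)+1/j$ (possible by the scale invariance of $\Yamfunc$). The universal upper bound $\Yam_M(g_j)\leq\sigma(S^n)$ together with H\"older's inequality on the compact $M$, $\int v_j^2\diff\mu_g\leq\vol(M,g)^{2/n}$, forces $\int|\diff v_j|_g^2\diff\mu_g$ to stay uniformly bounded. Combined with the pointwise estimates $|\diff v|_{g_j}^2\diff\mu_{g_j}=(1+o(1))|\diff v|_g^2\diff\mu_g$ and $|\scal_{g_j}-\scal_g|=o(1)$ (valid uniformly on $M$ because $g_j\to g$ in $C^2$), this yields $\Yamfunc_{g_j}(v_j)=\Yamfunc_g(v_j)+o(1)\geq\Yam_M(g)+o(1)$, hence $\liminf_j\Yam_M(g_j)\geq\Yam_M(g)$.

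For the \textbf{failure of continuity} when $M$ is noncompact and $\Yam_M(g)>-\infty$, fix a compact exhaustion $K_1\subset K_2\subset\cdots$ and construct metrics $g_j$ agreeing with $g$ on $K_j$, so that $g_j\to g$ in the compact-open $C^\infty$-topology, and hence in every compact-open $C^k$-topology. Crucially, the modification of $g$ on $M\without K_j$ must be \emph{non-conformal}: every globally conformal modification $g_j=u^{4/(n-2)}g$ with $u\equiv1$ on $K_j$ satisfies $\Yam_M(g_j)=\Yam_M(g)$ by the conformal invariance identity $\Yamfunc_{u^{4/(n-2)}g}(v)=\Yamfunc_g(uv)$. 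Instead, inside a small ball $B_j\subset M\without K_j$ one inserts a metric not pointwise proportional to $g|_{B_j}$ whose Dirichlet Yamabe constant is arbitrarily negative---constructible via a suitable non-conformally-flat warped or doubly-warped deformation matching $g$ smoothly near $\mfbd B_j$. A bump function $v_j$ supported in this bubble then forces $\Yamfunc_{g_j}(v_j)\to-\infty$, so $\Yam_M(g_j)\to-\infty$; since $\Yam_M(g)>-\infty$, continuity is broken.

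The \textbf{main obstacles} are the two just highlighted. The compact-case lower semicontinuity relies essentially on the H\"older bound $\int v^2\diff\mu_g\leq\vol(M,g)^{2/n}\norm{v}_{L^p(g)}^2$, which requires a finite-volume manifold; this is precisely the step that fails in the noncompact case, making room for the non-continuity. The non-continuity construction in turn must genuinely change the conformal class of $g$ on $M\without K_j$, since conformal ``bubbles''---which would otherwise be the easy way to produce very negative pointwise scalar curvature---leave $\Yam_M$ invariant.
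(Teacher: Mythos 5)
Your first two parts are sound. The upper-semicontinuity argument coincides with the paper's (Lemma \ref{coupper}): continuity of $g'\mapsto\Yamfunc_{g'}(v)$ for each fixed $v$, plus the fact that a pointwise infimum of continuous functions is upper semicontinuous; the $\Yam_M(g)=-\infty$ case follows. For the compact case you give the classical direct argument of B\'erard Bergery/Besse (near-minimizers $v_j$ have uniformly bounded Dirichlet energy thanks to the bounds $\Yam_M(g_j)\leq\sigma(S^n)$ and $\int v_j^2\,\diff\mu_g\leq\vol(M,g)^{2/n}$, and then all terms of $\Yamfunc_{g_j}(v_j)$ and $\Yamfunc_{g}(v_j)$ are compared uniformly on the compact $M$), whereas the paper simply invokes its fine-continuity Theorem \ref{finecontinuity} together with the coincidence of the fine and compact-open topologies on compact manifolds. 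Both routes are valid; yours is more elementary and self-contained, and the sequential formulation is legitimate because the compact-open $C^2$-topology is metrizable.

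The gap is in the discontinuity part. Your gluing scheme ($g_j=g$ on $K_j$, modified only in a ball $B_j\subseteq M\without K_j$, so that $g_j\to g$ compact-open $C^\infty$) is exactly right, and you correctly observe that the modification must leave the conformal class of $g$. But the whole burden of the proof then rests on the assertion that one can install on $B_j$, keeping $g$ fixed near $\mfbd B_j$, a metric with $\Yam_{B_j}\leq -m_j$ where $m_j\to\infty$, and ``a suitable non-conformally-flat warped or doubly-warped deformation'' does not establish this. The obstruction is quantitative: by Theorem \ref{gtheorem}(\ref{gtheoremineq}) one has $\Yam_{B_j}\geq-\norm{(\scal)_-}_{L^{n/2}}$, so making the scalar curvature very negative pointwise on a small region accomplishes nothing (just as conformal rescaling accomplishes nothing); one must exhibit a test function whose Yamabe quotient is genuinely very negative. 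The paper's solution (Lemmas \ref{cylinder} and \ref{balls}) is to embed in $B_j$ a long Riemannian cylinder $N\times[0,3i]$ over a closed $(n-1)$-dimensional hypersurface $N$ carrying a metric $h$ of negative scalar curvature ($S^{n-1}$ for $n\geq4$, a genus-two surface for $n=3$), and to test with a product function $w(x)u(t)$, where $w$ is a Yamabe minimizer on $(N,h)$ and $u$ a cutoff along the cylinder; the quotient then tends to $-\infty$ as the cylinder length $i$ grows, because the negative term scales like a positive power of $i$ while the $u'$-term decays. This is the one genuinely nontrivial ingredient of the theorem and it is missing from your plan. (Once it is supplied, your endgame is actually slightly more direct than the paper's, which first manufactures a single metric $g_{-\infty}$ with $\Yam_M(g_{-\infty})=-\infty$ via Theorem \ref{imagetheorem}(\ref{imagetheoremsigma}) and then argues through $\Yaminfty_M$ and Theorem \ref{gtheorem}(\ref{gtheoreminftyb}).)
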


The Yamabe map has better continuity properties with respect to the \emph{uniform} $C^2$-topology (recall that for $f\in C^\infty(M,\R)$, the function $f_-\in C^0(M,\R_{\geq0})$ is defined by $f_-(x)=-\min\set{0,f(x)}$):
\begin{theorem} \label{uniformcontinuity}
Let $M$ be a nonempty manifold of dimension $n\geq3$. Then the Yamabe map $\Yam_M$ is upper semicontinuous with respect to the uniform $C^2$-topology on $\Metr(M)$. At every metric $g\in\Metr(M)$ which satisfies $\Yam_M(g)=-\infty$ or admits constants $\eps,c\in\R_{>0}$ with $\abs{\Ric_g}_g \leq c(1+\abs{\scal_g})$ and $\norm{(\scal_g-\eps)_-}_{L^{n/2}(g)} < \infty$, the Yamabe map is continuous with respect to the uniform $C^2$-topology.
\end{theorem}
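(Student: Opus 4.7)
\smallskip
\noindent\textbf{Proof strategy.} The upper semicontinuity proceeds just as in Theorem~\ref{cocontinuity}: given $\tau>0$, pick $v\in C^\infty_c(M,\R_{\geq0})\setminus\{0\}$ with $\Yamfunc_g(v)\leq\Yam_M(g)+\tau$. Since $v$ has compact support and uniform $C^2$-convergence restricts there to compact-open $C^2$-convergence, $\Yamfunc_{g_i}(v)\to\Yamfunc_g(v)$, hence $\limsup_i\Yam_M(g_i)\leq\Yam_M(g)+\tau$, and $\tau\to0$ finishes. The same argument settles continuity when $\Yam_M(g)=-\infty$, because upper semicontinuity then forces $\Yam_M(g_i)\to-\infty$ as well.

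\smallskip
For the remaining case, assume both curvature hypotheses hold at $g$; the task is $\liminf_i\Yam_M(g_i)\geq\Yam_M(g)>-\infty$. I choose near-minimizers $v_i\in C^\infty_c(M,\R_{\geq0})\setminus\{0\}$ with $\Yamfunc_{g_i}(v_i)\leq\Yam_M(g_i)+1/i$, rescaled so that $\norm{v_i}_{L^p(g_i)}=1$. Since $\Yamfunc_g(v_i)\geq\Yam_M(g)$ unconditionally, it suffices to establish $\Yamfunc_g(v_i)-\Yamfunc_{g_i}(v_i)\to0$.

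\smallskip
Uniform $C^2$-closeness yields sequences $\delta_i,\eta_i\to0$ with pointwise bounds $\bigabs{\F{g_i}{g}-1}\leq\delta_i$, $\bigabs{\abs{dv}_{g_i}^2-\abs{dv}_g^2}\leq\delta_i\abs{dv}_g^2$, and, crucially,
\[
\bigabs{\scal_{g_i}(x)-\scal_g(x)}\;\leq\;\eta_i\bigl(1+\abs{\scal_g(x)}\bigr).
\]
The last bound follows from the fact that $\scal$ is a universal polynomial in the metric, its inverse, and its first two derivatives: a first-order expansion of $\scal_{g_i}-\scal_g$ in $g_i-g$ has coefficients of order $1+\abs{\Ric_g}_g$, which under the Ricci-scalar hypothesis are dominated by $1+\abs{\scal_g}$. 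Meanwhile the $L^{n/2}$-assumption combined with H\"older's inequality (exponents $n/2$ and $p/2$) produces the coercivity
\[
\int_M\scal_g\,v^2\,\diff\mu_g\;\geq\;\eps\norm{v}_{L^2(g)}^2-M\norm{v}_{L^p(g)}^2,\qquad M\define\norm{(\scal_g-\eps)_-}_{L^{n/2}(g)},
\]
and the same estimate with $\eps/2$ and $M+o(1)$ in place of $\eps$ and $M$ holds at $g_i$ for large $i$, the $\eta_i$-perturbation being absorbable into the $L^{n/2}$-term.

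\smallskip
Feeding $\Yamfunc_{g_i}(v_i)\leq\Yam_M(g)+o(1)$ (a consequence of the already-proved upper semicontinuity) into the $g_i$-coercivity produces uniform-in-$i$ bounds on $\norm{v_i}_{L^2(g_i)}^2$, on the Dirichlet term $\int a_n\abs{dv_i}_{g_i}^2\,\diff\mu_{g_i}$, and hence on $\int\abs{\scal_{g_i}}\,v_i^2\,\diff\mu_{g_i}$. A direct term-by-term comparison of numerators and denominators in $\Yamfunc_g(v_i)-\Yamfunc_{g_i}(v_i)$, plugging these uniform bounds into the pointwise estimates above, then drives the difference to $0$ and closes the argument. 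The main obstacle is that $\supp v_i$ need not lie in a fixed compact set, so one cannot localize as in the upper semicontinuity proof; the two curvature hypotheses are precisely what is needed to tame such near-minimizing sequences on \emph{all} of $M$, and the Ricci-scalar inequality is what keeps the error $\scal_{g_i}-\scal_g$ under control when $\scal_g$ itself is unbounded.
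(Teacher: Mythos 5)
Your argument is correct in outline, but the lower-semicontinuity part takes a genuinely different route from the paper's. The shared core is the pointwise comparison lemma for uniformly $C^2$-close metrics -- in particular the estimate $\abs{\scal_h-\scal_g}\leq\eta\,(1+\abs{\scal_g})$ under the hypothesis $\abs{\Ric_g}_g\leq c(1+\abs{\scal_g})$, which you assert in two sentences but which is the technical heart of the paper (a full section: the linearization of $g\mapsto\scal_g$ contributes the $\abs{\Ric_g}_g$ factor, and the remainder of the Taylor expansion must be shown to be uniformly $O(\norm{h-g}_{C^2(g)})$ in normal coordinates, with no curvature factor). Given that lemma, the paper estimates $\Yamfunc_h(v)$ from below for an \emph{arbitrary} test function $v$ normalized in $L^p(h)$: it first rescales conformally to reduce to $\eps=1$, then splits $M$ into three regions according to the size and sign of $\scal_g$ and absorbs all error terms into either the surplus $\big(1-(1-\eps)^2\big)\scal_g v^2$ or the $L^{n/2}$-bound on $(\scal_g-1)_-$, so that every error is controlled by $\norm{v}_{L^p}^2$ alone and no a priori information about $v$ is needed. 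You instead work with near-minimizers $v_i$ for $g_i$ and must compensate for the fact that your error terms involve $\norm{v_i}_{L^2}^2$, the Dirichlet energy, and $\int\abs{\scal_g}v_i^2$, none of which is controlled by $\norm{v_i}_{L^p}^2$ on a noncompact manifold; your coercivity inequality, fed the upper bound $\Yamfunc_{g_i}(v_i)\leq\Yam_M(g)+o(1)$, supplies exactly those a priori bounds, after which the term-by-term comparison does close. This is a legitimate and arguably more standard ``direct method'' mechanism; what the paper's pointwise absorption buys is that it never needs to invoke upper semicontinuity or properties of minimizing sequences inside the lower-semicontinuity proof.

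Three details in your sketch deserve care. First, transferring coercivity from $g$ to $g_i$: the perturbation $\eta_i(1+\abs{\scal_g})$ contains a constant part $\eta_i$ that is \emph{not} absorbable into the $L^{n/2}$-term on a set of infinite volume; absorb it into $\tfrac{\eps}{2}\norm{v}_{L^2(g_i)}^2$ instead (or note that $\norm{(\scal_g-\eps)_-}_{L^{n/2}(g)}<\infty$ forces $\set{\scal_g<\eps/2}$ to have finite volume). Second, $\Yam_M(g_i)$ could a priori be $-\infty$, in which case your choice of $v_i$ must be modified (e.g.\ $\Yamfunc_{g_i}(v_i)\leq-i$); the coercivity bound then yields a contradiction for large $i$, so this case is vacuous but should be addressed. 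Third, the uniform $C^2$-topology is first countable by construction, so the reduction to sequences is legitimate, but it is worth saying so.
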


However, there exist metrics at which the Yamabe map is not continuous for any uniform $C^k$-topology with $k\in\N\cup\set{\infty}$. Such metrics can have scalar curvature $0$ and bounded Ricci curvature, so the sufficient criterion above cannot be generalized to $\eps=0$:
\begin{exampleit} \label{uniformexample}
Let $n\geq4$, let $N$ be a nonempty closed $(n-1)$-manifold with $\sigma(N)>0$. Then $N$ admits a Riemannian metric $h$ with $\scal_h=0$ such that for the product metric $g\define h+\diff t^2$ on $M\define N\times\R$, the Yamabe map $\Yam_M$ is not continuous at $g$ for any uniform $C^k$-topology on $\Metr(M)$ with $k\in\N\cup\set{\infty}$.
\end{exampleit}

Even with respect to the \emph{fine} $C^2$-topology, it is not obvious that the Yamabe map is continuous at every metric: the infimum in the definition makes the situation on noncompact manifolds even more nonlocal than in the compact case. An argument sharper than \BerardBergery's yields our main result:
\begin{theorem} \label{finecontinuity}
Let $M$ be a nonempty manifold of dimension $\geq3$. Then the Yamabe map $\Yam_M$ is continuous with respect to the fine $C^2$-topology on the space of Riemannian metrics on $M$.
\end{theorem}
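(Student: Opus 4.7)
The plan has two parts, corresponding to upper and lower semicontinuity. Upper semicontinuity at every $g \in \Metr(M)$ follows immediately from Theorem~\ref{cocontinuity}, since the fine $C^2$-topology refines the compact-open one. For lower semicontinuity I may assume $Y := \Yam_M(g) > -\infty$, the case $Y = -\infty$ also being covered by Theorem~\ref{cocontinuity}. Given $\eta > 0$, the task is to exhibit a continuous function $\eps \colon M \to \R_{>0}$ so that every $g'$ with $\abs{g' - g}_{C^2}(x) < \eps(x)$ pointwise satisfies $\Yamfunc_{g'}(v) \geq Y - \eta$ for every $v \in C^\infty_c(M, \R_{\geq 0}) \without \set{0}$; passing to the infimum then yields $\Yam_M(g') \geq Y - \eta$, and sets of this form are a neighbourhood basis of $g$ in the fine $C^2$-topology.

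Pointwise $C^2$-closeness of $g'$ and $g$ produces continuous functions $\omega, \psi, \theta \colon M \to \R_{\geq 0}$, each of the form $\eps(x)$ times a continuous function of $g$ and its derivatives up to order two, such that $\abs{\diff v}_{g'}^2\, \diff\mu_{g'} \geq (1 - \omega)\, \abs{\diff v}_g^2\, \diff\mu_g$, $\abs{\scal_{g'}\,\diff\mu_{g'} - \scal_g\,\diff\mu_g} \leq \psi\, \diff\mu_g$, and $v^p\, \diff\mu_{g'} \leq (1 + \theta)\, v^p\, \diff\mu_g$ pointwise. Normalising $\norm{v}_{L^p(g)} = 1$, these combine to give
\[
\Yamfunc_{g'}(v) \geq \frac{\Yamfunc_g(v) - a_n \int_M \omega\, \abs{\diff v}_g^2\, \diff\mu_g - \int_M \psi v^2\, \diff\mu_g}{\bigl(1 + \int_M \theta v^p\, \diff\mu_g\bigr)^{2/p}}.
\]

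I plan to choose $\eps$ satisfying several simultaneous smallness conditions: (a) $\theta(x), \omega(x) \leq \delta$ pointwise; (b) $\norm{\psi}_{L^{n/2}(g)} \leq \delta$; (c) $\norm{\omega(1 + \abs{\scal_g})}_{L^{n/2}(g)} \leq \delta$, for a small parameter $\delta > 0$ to be fixed in terms of $\eta$ and $\abs{Y}$. Each constraint is a pointwise upper bound on $\eps$ with continuous coefficients, and a positive continuous $\eps$ fulfilling them all exists on any Riemannian manifold: for any continuous $C \colon M \to \R_{>0}$, strictly positive continuous functions bounded above by $C^{-1}$ and of arbitrarily small $L^{n/2}(g) \cap L^\infty$-norm exist (choose them to decay rapidly at infinity). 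Conditions (a) and (b) together with H\"older's inequality and $\norm{v}_{L^p(g)} = 1$ then yield $\int \theta v^p\, \diff\mu_g \leq \delta$ and $\int \psi v^2\, \diff\mu_g \leq \norm{\psi}_{L^{n/2}(g)} \leq \delta$.

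The main obstacle is the gradient error $a_n \int \omega\, \abs{\diff v}_g^2\, \diff\mu_g$: unlike the other terms, $\int \abs{\diff v}_g^2\, \diff\mu_g$ admits no a priori bound in $\norm{v}_{L^p(g)}$, and $(\scal_g)_-$ need not lie in $L^{n/2}(g)$---the hyperbolic metric on $\R^n$ has $Y > -\infty$ yet constant negative scalar curvature, of infinite $L^{n/2}$-mass, so the easy approach of dominating the gradient integral by $L^{n/2}$-norms of $\scal_g$ is unavailable. My resolution rests on condition (c): starting from the pointwise bound $\omega \leq \delta$ and the identity $a_n \int \abs{\diff v}_g^2 = \Yamfunc_g(v) - \int \scal_g v^2$, and using H\"older with (c) to bound $\int \omega\, \abs{\scal_g}\, v^2 \leq \norm{\omega(1 + \abs{\scal_g})}_{L^{n/2}(g)} \leq \delta$, a careful rearrangement yields $a_n \int \omega\, \abs{\diff v}_g^2 \leq O(\delta)(\abs{\Yamfunc_g(v)} + 1)$. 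Substituting into the main comparison produces $\Yamfunc_{g'}(v) \geq (1 - O(\delta))\Yamfunc_g(v) - O(\delta)(1 + \abs{Y})$ uniformly in $v$; since $\Yamfunc_g(v) \geq Y$, choosing $\delta$ sufficiently small in terms of $\eta$ and $\abs{Y}$ gives $\Yamfunc_{g'}(v) \geq Y - \eta$, completing the proof.
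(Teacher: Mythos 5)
Your overall framework (upper semicontinuity from Theorem~\ref{cocontinuity}, lower semicontinuity via a pointwise $C^2$-closeness neighbourhood determined by a positive continuous function $\eps$, with smallness conditions imposed on $\eps$) matches the paper's strategy, and you correctly identified the one genuinely hard term, namely $a_n\int_M\omega\,\abs{\diff v}_g^2\,\diff\mu_g$. But your proposed resolution of that term does not work, and this is exactly the step where the real content of the theorem lies. Your ``careful rearrangement'' starts from $\omega\leq\delta$ pointwise, which gives $a_n\int\omega\abs{\diff v}_g^2 \leq \delta\,a_n\int\abs{\diff v}_g^2 = \delta\bigl(\Yamfunc_g(v)-\int\scal_g\,v^2\bigr)$, and the problematic remainder is $\delta\int(\scal_g)_-\,v^2\,\diff\mu_g$ with the \emph{constant} weight $\delta$, not the decaying weight $\omega$. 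Condition (c) controls $\int\omega\,\abs{\scal_g}\,v^2$, which by $\omega\leq\delta$ is a \emph{weaker} quantity; it says nothing about $\delta\int(\scal_g)_-v^2$. On hyperbolic space ($\Yam_M(g)=\sigma(S^n)>-\infty$, $\scal_g\equiv -n(n-1)$), taking $v$ approximately constant on balls of radius $R\to\infty$ and $L^p$-normalised makes $\int(\scal_g)_-v^2 \sim \vol(B_R)^{2/n}\to\infty$, so for any fixed $\delta>0$ the bound $a_n\int\omega\abs{\diff v}_g^2\leq O(\delta)(\abs{\Yamfunc_g(v)}+1)$ cannot be derived this way; the chain of inequalities you describe simply does not close.

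The repair requires keeping the weight \emph{inside} the gradient integral and converting $\int\omega\abs{\diff v}_g^2$ into $\int\abs{\diff(\sqrt{\omega}\,v)}_g^2$ plus correction terms of the form $\int v^2\abs{\diff\sqrt{\omega}}_g^2$ and (after integration by parts of the cross term) $\int v^2\laplace_g\omega$; the first summand is then $\geq a_n^{-1}\bigl(\Yam_M(g)\norm{\sqrt{\omega}v}_{L^p}^2-\int\scal_g\,\omega\,v^2\bigr)$, where the scalar-curvature term now carries the decaying weight $\omega$ and \emph{is} controlled by your condition (c). But the correction terms force you to impose smallness of $\norm{\diff\sqrt{\omega}}_{L^n(g)}$ and $\norm{\laplace_g\omega}_{L^{n/2}(g)}$ in addition to (a)--(c), i.e.\ first- and second-derivative control on the weight, and you must verify that such an $\omega$ exists (it does, but this is a nontrivial construction: one builds $\omega$ from cutoffs on the shells of a compact exhaustion with rapidly decreasing amplitudes -- this is the paper's Lemma~\ref{deltalemmafine}). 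The paper implements precisely this via the substitution $w=\bigl((1-\delta)\F{h}{g}\bigr)^{1/2}v$ together with Corollary~\ref{topologylemma}(5). As written, your conditions (a)--(c) contain no derivative control on $\omega$ whatsoever, so the existence statement you invoke (``positive continuous functions of arbitrarily small $L^{n/2}\cap L^\infty$-norm exist'') is not the existence statement you actually need, and the proof has a genuine gap at its central step.
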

This shows that the fine $C^2$-topology is the correct topology in the context of the Yamabe map on noncompact manifolds, as one might have expected. Therefore we do not mention other topologies on $\Metr(M)$ in the following results.

\smallskip
Theorem \ref{finecontinuity} implies that for each $r\in\R\cup\set{-\infty}$ the set of $g\in\Metr(M)$ with $\Yam_M(g)=r$ is closed with respect to the fine $C^2$-topology on $\Metr(M)$. For $r=-\infty$, a stronger statement is true:

\begin{theorem} \label{imagetheoremclopen}
Let $M$ be a nonempty manifold of dimension $\geq3$. Then the set of $g\in\Metr(M)$ with $\Yam_M(g)=-\infty$ is open and closed with respect to the fine $C^2$-topology on $\Metr(M)$.
\end{theorem}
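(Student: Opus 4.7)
Closedness is immediate from Theorem \ref{finecontinuity}: the singleton $\set{-\infty}$ is closed in $\R\cup\set{-\infty}$, and the preimage of a closed set under a continuous map is closed. For openness, fix $g_0$ with $\Yam_M(g_0)=-\infty$; since the Yamabe constant is finite on compact manifolds, $M$ must be noncompact. The plan is to build a sequence $\set{v_k}_{k\in\N}\subset C^\infty_c(M,\R_{\geq 0})\without\set{0}$ with pairwise disjoint, locally finite compact supports $K_k\define\supp v_k$ and $\Yamfunc_{g_0}(v_k)\leq -k$. Once such a sequence is in hand, the conclusion comes quickly: $g\mapsto\Yamfunc_g(v_k)$ depends only on the $2$-jet of $g$ over $K_k$ and is continuous in the $C^2(K_k)$-topology, yielding $\delta_k>0$ with $\norm{g-g_0}_{C^2(K_k)}<\delta_k\implies\Yamfunc_g(v_k)\leq -k+1$. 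Local finiteness of $\set{K_k}$ lets one glue the $\delta_k$ via bump functions into a continuous $\eps\colon M\to\R_{>0}$ with $\eps|_{K_k}\leq\delta_k$. The fine $C^2$-neighborhood of $g_0$ cut out by $\eps$ then consists of metrics $g$ satisfying $\Yam_M(g)\leq\Yamfunc_g(v_k)\leq -k+1$ for every $k$, whence $\Yam_M(g)=-\infty$.

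Producing $\set{v_k}$ reduces to the key sub-lemma: if $\Yam_M(g_0)=-\infty$, then
\[
\inf\bigset{\Yamfunc_{g_0}(v) \bigsuchthat v\in C^\infty_c(M\without K,\R_{\geq 0})\without\set{0}} = -\infty
\]
for every compact $K\subset M$. Granting the sub-lemma, fix a compact exhaustion $B_1\subset B_2\subset\cdots$ of $M$ and recursively pick compact sets $E_k\supset B_k\cup\bigcup_{j<k}K_j$ together with $v_k\in C^\infty_c(M\without E_k,\R_{\geq 0})\without\set{0}$ satisfying $\Yamfunc_{g_0}(v_k)\leq -k$; the $K_k$ are automatically disjoint, and any compact subset of $M$, being contained in some $B_\ell$, meets only $K_j$ with $j<\ell$, giving local finiteness.

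I would prove the sub-lemma by contrapositive, via a quadratic-partition-of-unity localization. Suppose, for contradiction, that $\Yamfunc_{g_0}(v)\geq -C$ for every $v\in C^\infty_c(M\without K,\R_{\geq 0})\without\set{0}$ and some $C\in\R$. Pick a compact $K'\supset K$ with smooth boundary, $K\subset(K')^\circ$, and smooth cutoffs $\eta_1,\eta_2\in C^\infty(M)$ with $\eta_1^2+\eta_2^2\equiv 1$, $\eta_1\equiv 1$ on a neighborhood of $K$, and $\supp\eta_1\subset(K')^\circ$ (e.g.\ $\eta_1=\cos(\phi\compose f)$, $\eta_2=\sin(\phi\compose f)$ for suitable $f,\phi$). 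Differentiating $\eta_1^2+\eta_2^2\equiv 1$ yields the pointwise identity
\[
a_n\abs{\diff v}_{g_0}^2+\scal_{g_0}v^2 = \sum_{i=1}^2\bigl(a_n\abs{\diff(\eta_i v)}_{g_0}^2+\scal_{g_0}(\eta_i v)^2\bigr) - a_n\bigl(\abs{\diff\eta_1}_{g_0}^2+\abs{\diff\eta_2}_{g_0}^2\bigr)v^2 .
\]
Integrating, the $i=1$ summand is bounded below by $\lambda_1^D\norm{\eta_1 v}_{L^2(g_0)}^2$, where $\lambda_1^D$ is the (finite) first Dirichlet eigenvalue of the conformal Laplacian on $K'$; the $i=2$ summand, since $\supp(\eta_2 v)\subset M\without K$, is bounded below by $-C\norm{\eta_2 v}_{L^p(g_0)}^2$; the cutoff-error term is bounded below by $-D\norm{v}_{L^2(K')}^2$ for an explicit constant $D$. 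Each $L^2$-norm on the compact $K'$ is absorbed into $\norm{v}_{L^p(g_0)}^2$ via H\"older, and $\norm{\eta_i v}_{L^p}\leq\norm{v}_{L^p}$, producing $\Yamfunc_{g_0}(v)\geq -C'$ with $C'$ independent of $v$, contradicting $\Yam_M(g_0)=-\infty$. This sub-lemma is the main obstacle; the remainder of the argument is a direct gluing of local $C^2$-continuity via the fine topology.
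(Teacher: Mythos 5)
Your proof is correct, and for the openness half it takes a more self-contained route than the paper. The paper settles openness in two citations: Theorem \ref{gtheorem}(\ref{gtheoremineq},\ref{gtheoreminftyb}) identifies $\Yam_M^{-1}(\set{-\infty})$ with $\Yaminfty_M^{-1}(\set{-\infty})$, and Theorem \ref{locallyconstant} (local constancy of $\Yaminfty_M$) makes the latter set fine $C^2$-open. Your sub-lemma is exactly Theorem \ref{gtheorem}(\ref{gtheoreminftyb}) in the equivalent form ``$\Yam_M(g_0)=-\infty$ forces $\Yam_{M\without K}(g_0)=-\infty$ for every compact $K$'', but your proof of it is genuinely different: the quadratic partition of unity $\eta_1^2+\eta_2^2=1$ (IMS localization) eliminates the cross terms that the paper's linear cutoff $\eta$, $1-\eta$ produces and must handle by integration by parts, and you bound the interior piece by a first Dirichlet eigenvalue where the paper uses the $L^{n/2}$ estimate of Theorem \ref{gtheorem}(\ref{gtheoremineq}); both work, yours is arguably cleaner, the paper's is more explicit about the constants. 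Your construction of the disjoint, locally finite family $(v_k)$ with $\Yamfunc_{g_0}(v_k)\leq -k$ and the gluing of the local continuity radii $\delta_k$ into one fine $C^2$-neighborhood is in substance the last case of the paper's proof of Theorem \ref{locallyconstant}, but stripped down: since you only need the one-sided bound $\Yamfunc_g(v_k)\leq -k+1$, you bypass the quantitative neighborhood machinery of Lemma \ref{topologylemmastrong} entirely. The trade-off is clear: the paper gets the theorem for free from results it needs anyway, while your argument is shorter if one wants only this statement. Two cosmetic points: the $C^2(K_k;g_0)$-``norm'' is a sum of three suprema, so require $\eps\restrict_{K_k}\leq\delta_k/3$ (or define the neighborhood jet-wise as in Examples \ref{fineexamples}); and since $\lambda_1^D$ may be negative, the absorption of $\norm{\eta_1v}_{L^2(K')}^2$ into $\vol(K',g_0)^{2/n}\norm{v}_{L^p(g_0)}^2$ via H\"older should be stated as an upper bound on that $L^2$-norm before multiplying by $\lambda_1^D$ --- which is what you do implicitly.
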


\medskip
S.\ Kim \cite{Kim1996,Kim1997} introduced another, closely related, functional $\Yaminfty=\Yaminfty_M$ on the space $\Metr(M)$ of Rie\-mann\-ian metrics on a noncompact $n$-manifold $M$: For a chosen compact exhaustion $(K_i)_{i\in\N}$ of $M$, one defines
\[
\Yaminfty_M(g) \define \lim_{i\to\infty}\Yam_{M\without K_i}(g) \;\in \cci{-\infty}{\,\sigma(S^n)} ,
\]
where the restriction of $g$ to $M\without K_i$ is suppressed in the notation. The limit exists and does not depend on the chosen exhaustion (cf. \ref{Yaminftywelldef} below). We call $\Yaminfty_M(g)$ the \emph{Yamabe constant at infinity} of $g$.

\begin{theorem} \label{locallyconstant}
Let $M$ be a noncompact manifold of dimension $\geq3$. Then $\Yaminfty_M$ is locally constant (in particular continuous) with respect to the fine $C^2$-topology on $\Metr(M)$.
\end{theorem}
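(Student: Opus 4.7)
The plan is to fix a compact exhaustion $(K_i)_{i\in\N}$ of $M$ and to exploit the monotonicity of the sequence $i\mapsto\Yam_{M\without K_i}(g)$, which is non-decreasing because $C^\infty_c(M\without K_j)\subset C^\infty_c(M\without K_i)$ for $i\leq j$; thus $\Yaminfty_M(g)=\sup_i\Yam_{M\without K_i}(g)$. I would apply Theorem \ref{finecontinuity} -- or Theorem \ref{imagetheoremclopen} in the case $\Yaminfty_M(g)=-\infty$ -- to each noncompact open submanifold $M\without K_i$ separately, and then diagonalize over $i$, exploiting the fact that the fine $C^2$-topology permits pointwise control whose precision is allowed to shrink arbitrarily fast at infinity.

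\textbf{Applying the previous theorems.} In the case $\Yaminfty_M(g)\in\R$, Theorem \ref{finecontinuity} applied to $M\without K_i$ furnishes, for each $i$, a positive continuous function $\eta_i$ on $M\without K_i$ such that any Riemannian metric $h$ on $M\without K_i$ with $\abs{h-g}_{g,C^2}(x)<\eta_i(x)$ for all $x\in M\without K_i$ also satisfies $\bigabs{\Yam_{M\without K_i}(h)-\Yam_{M\without K_i}(g)}<1/i$. In the case $\Yaminfty_M(g)=-\infty$, monotonicity forces $\Yam_{M\without K_i}(g)=-\infty$ for every $i$, and Theorem \ref{imagetheoremclopen} applied to $M\without K_i$ would instead furnish $\eta_i$ such that $\Yam_{M\without K_i}(h)=-\infty$ on the corresponding $\eta_i$-neighborhood.

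\textbf{Diagonalization and conclusion.} Next, using paracompactness of $M$, I would construct a positive continuous $\eps\colon M\to\R_{>0}$ satisfying $\eps\leq\eta_i$ on $M\without K_i$ for every $i$; concretely this can be done by taking a partition of unity subordinate to a refinement of the exhaustion and forming the corresponding convex combination of $\min(\eta_1,\ldots,\eta_i)$ on each layer $K_{i+1}\without K_i$. Since restriction from $\Metr(M)$ to $\Metr(M\without K_i)$ is continuous for the fine $C^2$-topology, every $g'$ in the fine neighborhood $U=\{g'\in\Metr(M):\abs{g'-g}_{g,C^2}(x)<\eps(x)\text{ for all }x\in M\}$ of $g$ would restrict, for every $i$, to a metric lying in the relevant $\eta_i$-neighborhood of $g|_{M\without K_i}$; hence $\bigabs{\Yam_{M\without K_i}(g')-\Yam_{M\without K_i}(g)}<1/i$ (respectively $=0$ in the $-\infty$ case), and letting $i\to\infty$ would yield $\Yaminfty_M(g')=\Yaminfty_M(g)$.

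\textbf{Main obstacle.} The crux is the diagonalization step: although the Yamabe functional is global, the fine $C^2$-topology is flexible enough to impose countably many estimates -- one for each level of the exhaustion -- through a single open neighborhood of $g$. This is what is expected to promote the approximation $1/i$ into exact equality in the limit, yielding \emph{local constancy} rather than just continuity, and it is precisely this feature that the coarser compact-open and uniform $C^2$-topologies lack.
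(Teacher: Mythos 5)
Your overall strategy -- exhaust $M$ by compacta, control $\Yam_{M\without K_i}$ with precision $1/i$ for every $i$ simultaneously inside a single fine $C^2$-neighborhood, and pass to the limit -- is exactly the strategy of the paper. But the way you propose to obtain the simultaneous control contains a genuine gap, and it sits precisely at the step you yourself identify as the crux. Applying Theorem \ref{finecontinuity} to the open manifold $N_i:=M\without K_i$ gives a neighborhood of $g\restrict_{N_i}$ in the fine $C^2$-topology \emph{of $\Metr(N_i)$}, and that topology treats the compact frontier $\mfbd K_i$ as an additional end of $N_i$: a basic tube neighborhood $\set{h : \abs{h-g}_{g,C^2}<\eta_i}$ may have $\eta_i(x)\to 0$ as $x$ approaches $\mfbd K_i$ from inside $N_i$. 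Since you use Theorem \ref{finecontinuity} as a black box, you cannot exclude that every admissible $\eta_i$ degenerates there. If it does, no continuous positive $\eps$ on $M$ can satisfy $\eps\leq\eta_i$ on $N_i$, because $\eps$ has a positive minimum on the compact set $\mfbd K_i$; your layer-by-layer construction of $\eps$ only handles the finitely many constraints active at each point, not their behaviour near $\mfbd K_i$. For the same reason your appeal to ``restriction $\Metr(M)\to\Metr(M\without K_i)$ is continuous for the fine $C^2$-topology'' is false: restriction to an open subset is notoriously \emph{not} continuous for fine topologies (a fine neighborhood in $\Metr(M)$ cannot impose conditions that shrink to zero at the interior frontier of $N_i$). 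To close the gap one must show that the neighborhoods of Theorem \ref{finecontinuity} for $N_i$ can be chosen non-degenerate along $\mfbd K_i$, and uniformly so in $i$; this forces one to reopen the proof of Theorem \ref{finecontinuity}. That is exactly what the paper does: Lemma \ref{topologylemmastrong} produces a single $\delta\in C^\infty(M,\R_{>0})$ and a single fine $C^2$-neighborhood $\mathcal{U}\subseteq\Metr(M)$ such that the estimates needed for the lower-semicontinuity argument hold with precision $\eps_i$ on $M\without K_i$ for every $i$, and the proof of Theorem \ref{locallyconstant} then reruns the estimates of Theorem \ref{finecontinuity} on each $M\without K_i$ with $\eps$ replaced by $\eps_i$. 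Your diagonalization idea is correct in spirit, but it has to be performed inside the proof, not on top of the theorem.

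Two further points. First, invoking Theorem \ref{imagetheoremclopen} in the case $\Yaminfty_M(g)=-\infty$ is circular relative to the paper's development: the paper deduces Theorem \ref{imagetheoremclopen} \emph{from} Theorem \ref{locallyconstant} (together with Theorems \ref{finecontinuity} and \ref{gtheorem}). This is easily repaired: when $\Yam_{M\without K_i}(g)=-\infty$ for all $i$, upper semicontinuity (Corollary \ref{upper}, applied via test functions $v_i$ supported in $M\without K_i$ with $\Yamfunc_g(v_i)\leq -i$) already yields $\Yam_{M\without K_i}(h)\leq -i+o(1)$ on a suitable neighborhood, whence $\Yaminfty_M(h)=-\infty$; note that this direction only involves conditions on the $2$-jet of $h$ over the compact sets $\supp(v_i)$, which do escape to infinity, so here the diagonalization is unproblematic. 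Second, a minor point: in the case $\Yaminfty_M(g)\in\R$ you may still have $\Yam_{M\without K_i}(g)=-\infty$ for finitely many small $i$, for which the inequality $\bigabs{\Yam_{M\without K_i}(h)-\Yam_{M\without K_i}(g)}<1/i$ is meaningless; one should start the construction at the first index $i_0$ with $\Yam_{M\without K_{i_0}}(g)>-\infty$, as the paper does.
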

In contrast, $\Yam_M$ is certainly not locally constant, because $\Yam_M(g)$ can be changed continuously by modifying $g$ on any compact subset $K$ of $M$ while keeping it fixed outside $K$.

\smallskip
Several general statements hold for the Yamabe constant and the Yamabe constant at infinity:
\begin{theorem} \label{gtheorem}
Every Riemannian metric $g$ on a noncompact manifold of dimension $n\geq3$ satisfies:
\begin{enumerate}
\item\label{gtheoremineq} $-\norm{(\scal_g)_-}_{L^{n/2}(g)} \leq \Yam_M(g) \leq \Yaminfty_M(g)$.
\item\label{gtheoreminftya} If $\Yaminfty_M(g)<0$, then $\Yaminfty_M(g)=-\infty$.
\item\label{gtheoreminftyb} If $\Yam_M(g)=-\infty$, then $\Yaminfty_M(g)=-\infty$.
\end{enumerate}
\end{theorem}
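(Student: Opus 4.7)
Part \eqref{gtheoremineq}, lower bound: for $v\in C^\infty_c(M,\R_{\geq0})\without\set{0}$, drop the nonnegative gradient term in $\Yamfunc_g(v)$ and apply H\"older's inequality with the dual exponents $n/2$ and $p/2=n/(n-2)$ to the scalar-curvature term, so $\int_M\scal_g v^2\diff\mu_g \geq -\norm{(\scal_g)_-}_{L^{n/2}(g)}\norm{v}_{L^p(g)}^2$. For the upper bound, the inclusion $C^\infty_c(M\without K_i)\subset C^\infty_c(M)$ gives $\Yam_M(g)\leq\Yam_{M\without K_i}(g)$ for every $i$; let $i\to\infty$.

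For part \eqref{gtheoreminftya}, I may assume $\Yaminfty_M(g)$ is finite. The inclusion just used makes $i\mapsto\Yam_{M\without K_i}(g)$ nondecreasing, so $\Yam_{M\without K_i}(g)\leq\Yaminfty_M(g)<\alpha<0$ for every $i$, where $\alpha\define\Yaminfty_M(g)/2$. For each $i$ pick $v_i\in C^\infty_c(M\without K_i,\R_{\geq0})\without\set{0}$ with $\norm{v_i}_{L^p(g)}=1$ and $\Yamfunc_g(v_i)<\alpha$. The key step is to extract a sequence with pairwise disjoint supports escaping to infinity: fix $j$ and recursively set $i_1\define j$ and, for $k\geq 1$, choose $i_{k+1}$ so large that $K_{i_{k+1}}\supset K_{i_k}\cup\supp(v_{i_k})$. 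Setting $w_N\define\sum_{k=1}^N v_{i_k}\in C^\infty_c(M\without K_j,\R_{\geq0})$, disjointness of supports yields $\norm{w_N}_{L^p(g)}^p=N$ and $\int_M(a_n\abs{\diff w_N}_g^2+\scal_g w_N^2)\diff\mu_g=\sum_{k=1}^N\Yamfunc_g(v_{i_k})<N\alpha$, so $\Yamfunc_g(w_N)<\alpha N^{(p-2)/p}\to-\infty$ since $p>2$ and $\alpha<0$. Hence $\Yam_{M\without K_j}(g)=-\infty$ for every $j$, so $\Yaminfty_M(g)=-\infty$.

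I prove part \eqref{gtheoreminftyb} by contrapositive: if $\Yaminfty_M(g)>-\infty$, I show $\Yam_M(g)>-\infty$. By the monotonicity used above, some $L\define\Yam_{M\without K_i}(g)$ is finite. The approach is an IMS-type localisation. Pick $\theta\in C^\infty(M,\cci{0}{1})$ with $\theta=0$ on $K_i$ and $\theta=1$ outside $K_{i+1}$, and set $\phi\define\cos(\pi\theta/2)$, $\psi\define\sin(\pi\theta/2)$; then $\phi,\psi\in C^\infty(M,\cci{0}{1})$, $\phi^2+\psi^2\equiv 1$, $\supp\phi\subset K_{i+1}$, and $\psi\restrict_{K_i}=0$. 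Using $\phi\,\diff\phi+\psi\,\diff\psi=0$, one integration by parts produces the identity
\[
\Yamfunc_g(v)\norm{v}_{L^p(g)}^2=\Yamfunc_g(\phi v)\norm{\phi v}_{L^p(g)}^2+\Yamfunc_g(\psi v)\norm{\psi v}_{L^p(g)}^2-a_n\!\int_M(\abs{\diff\phi}_g^2+\abs{\diff\psi}_g^2)v^2\,\diff\mu_g
\]
for every $v\in C^\infty_c(M,\R_{\geq0})\without\set{0}$. Now $\phi v$ is supported in the compact set $K_{i+1}$, so part \eqref{gtheoremineq} applied to its restriction gives $\Yamfunc_g(\phi v)\geq-\norm{(\scal_g)_-\restrict_{K_{i+1}}}_{L^{n/2}(g)}\invdef C_1$; $\psi v\in C^\infty_c(M\without K_i,\R_{\geq0})$ gives $\Yamfunc_g(\psi v)\geq L$; and since $\abs{\diff\phi}_g^2+\abs{\diff\psi}_g^2$ is bounded and supported in the compact set $K_{i+1}$, H\"older gives an estimate of the form $C_2\norm{v}_{L^p(g)}^2$ for the last integral. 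Combining these with $\norm{\phi v}_{L^p(g)},\norm{\psi v}_{L^p(g)}\leq\norm{v}_{L^p(g)}$ yields the uniform finite lower bound $\Yamfunc_g(v)\geq\min(C_1,0)+\min(L,0)-a_n C_2$, hence $\Yam_M(g)>-\infty$.

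The main obstacle will be part \eqref{gtheoreminftyb}: producing \emph{smooth} cutoffs with $\phi^2+\psi^2=1$ (the trigonometric ansatz above being the cleanest device), deriving the IMS identity for the conformal Laplacian, and tracking the signs of the three terms carefully so that the error term from the localisation does not destroy the finite lower bound.
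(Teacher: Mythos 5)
Your proofs of parts (1) and (2) are correct and essentially identical to the paper's: part (1) is the same H\"older estimate plus monotonicity of $U\mapsto\Yam_U(g)$ under inclusion (Fact \ref{increasing}), and part (2) is the same ``sum $N$ near-minimizers with pairwise disjoint supports'' device; the paper takes $\Yamfunc_g(v_i)\leq\Yaminfty_M(g)+2^{-i}$ and arrives at $(j-k)^{2/n}\Yaminfty_M(g)+2$, you take $\Yamfunc_g(v_{i_k})<\Yaminfty_M(g)/2$ and arrive at $\alpha N^{2/n}$, and both conclude from $1-2/p=2/n>0$.

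Part (3) is where you genuinely diverge, and your route is the cleaner one. The paper splits $v=\eta v+(1-\eta)v$ with an ordinary cutoff, so a cross term $2\int_M\big(a_n\eval{\diff(\eta v)}{\diff((1-\eta)v)}_g+\scal_g\,\eta(1-\eta)v^2\big)\diff\mu_g$ survives and has to be beaten down by hand: the paper expands it, discards the nonnegative piece $2a_n\int\eta(1-\eta)\abs{\diff v}_g^2$, integrates by parts to convert $\int\eval{2v\,\diff v}{(1-2\eta)\diff\eta}_g$ into $-\int v^2\divergence_g((1-2\eta)\diff\eta)$, and H\"olders the resulting compactly supported coefficients against $\norm{v}_{L^p}^2=1$. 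Your IMS partition with $\phi^2+\psi^2=1$ kills the cross term identically via $\phi\,\diff\phi+\psi\,\diff\psi=0$ (in fact no integration by parts is needed: the localisation identity is pointwise), leaving the single error term $a_n\int(\abs{\diff\phi}_g^2+\abs{\diff\psi}_g^2)v^2\,\diff\mu_g$, which is H\"oldered exactly as in the paper; both proofs then conclude from the same bound $\min(C_1,0)+\min(L,0)-a_nC_2$. Two small points to tighten. First, as literally stated, $\theta\restrict_{K_i}=0$ only gives $\supp\psi\subseteq\overline{M\without K_i}$, which may meet $\mfbd K_i$, so $\psi v$ need not lie in $C^\infty_c(M\without K_i,\R_{\geq0})$; require $\theta\equiv0$ on a neighborhood of $K_i$ and $\theta\equiv1$ on a neighborhood of the closure of $M\without K_{i+1}$, as the paper does for $\eta$, and read $\Yamfunc_g(\phi v)\norm{\phi v}_{L^p(g)}^2$ as the quadratic form (equal to $0$) when $\phi v=0$. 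Second, the bound $\Yamfunc_g(\phi v)\geq C_1$ does not really need to be routed through part (1) for an open neighborhood of $K_{i+1}$: the direct H\"older estimate $-\int(\scal_g)_-(\phi v)^2\,\diff\mu_g\geq-\norm{(\scal_g)_-}_{L^{n/2}(K_{i+1};g)}\norm{\phi v}_{L^p(g)}^2$ already suffices and avoids any boundary issue. With these cosmetic repairs the argument is complete.
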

For some remarks and a conjecture related to Theorem \ref{gtheorem}(\ref{gtheoremineq}), see Section \ref{gtheoremproof}.

\begin{theorem} \label{imagetheorem}
Let $M$ be a nonempty manifold of dimension $n\geq3$ each of whose connected components is noncompact. Then:
\begin{enumerate}
\item\label{imagetheoremsigma} The image of $\Yam_M$ is an interval which contains $-\infty$ and $0$. Thus $0\leq\sigma(M)\leq\sigma(S^n)$.
\item\label{imagetheorempositive} If $M$ is diffeomorphic to an open subset of a compact $n$-manifold, then $0<\sigma(M)$.
\end{enumerate}
\end{theorem}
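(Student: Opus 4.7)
The plan addresses the two parts of the theorem separately. For part (\ref{imagetheoremsigma}) I would show three things: that $-\infty$ is attained by $\Yam_M$, that the image of $\Yam_M$ is an interval, and that $0$ lies in this image (the upper bound $\sigma(M)\leq\sigma(S^n)$ is already recorded in the introduction). To attain $-\infty$, start from any reference metric $g_0$ on $M$ and, using that each connected component of $M$ is noncompact, pick a sequence of pairwise disjoint closed balls $B_i\subset M$ escaping every compact set. On each $B_i$ perform a compactly supported conformal deformation of $g_0$ making $\scal_g$ sufficiently negative that a bump function $v_i$ supported in $B_i$ satisfies $\Yamfunc_g(v_i)\leq -i$, forcing $\Yam_M(g)=-\infty$. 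For the interval property, $\Metr(M)$ is convex and each affine path $t\mapsto(1-t)g_0+tg_1$ is continuous with respect to the fine $C^2$-topology (the difference $(t-s)(g_1-g_0)$ is globally small in $C^2$-norm); Theorem \ref{finecontinuity} then makes $\Yam_M$ continuous along such paths, so the image is a connected subset of $\R\cup\set{-\infty}$, hence an interval.

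To place $0$ in the image, I would use the lower bound $\Yam_M(g)\geq-\norm{(\scal_g)_-}_{L^{n/2}(g)}$ from Theorem \ref{gtheorem}(\ref{gtheoremineq}). For every $\eps>0$, a construction based on a compact exhaustion $M=\bigcup_i K_i$ together with local conformal modifications (using the local version of the Kazdan--Warner prescribed scalar curvature theorem on each annulus $K_i\without K_{i-1}$) produces a metric $g_\eps$ with $\norm{(\scal_{g_\eps})_-}_{L^{n/2}(g_\eps)}<\eps$: one arranges the pointwise negative part of $\scal_{g_\eps}$ on each annulus to decay fast enough relative to the volume of the annulus that the total contribution to the $L^{n/2}$-integral is below $\eps^{n/2}$. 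This gives $\sigma(M)\geq 0$, and combined with $-\infty$ in the image and the interval property puts every value in $[-\infty,\sigma(M))$ in the image, including $0$ when $\sigma(M)>0$. The borderline case $\sigma(M)=0$ is handled by refining the construction to produce a metric $g$ with $\scal_g\geq 0$ everywhere, so that $\Yamfunc_g(v)\geq 0$ for every test $v$; then $\Yam_M(g)=\sigma(M)=0$.

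For part (\ref{imagetheorempositive}), let $\bar{M}$ be a compact $n$-manifold containing $M$ as an open subset. I would exhibit a metric $g$ on $M$ with $\Yam_M(g)>0$ by choosing a metric $\bar g$ on $\bar{M}$ whose scalar curvature is strictly positive on $M$: if $\sigma(\bar{M})>0$, restrict any metric of positive scalar curvature; if $\sigma(\bar{M})\leq 0$, apply the Kazdan--Warner theorem (which in that case realizes every sign-changing smooth function as a scalar curvature) to arrange $\scal_{\bar g}>0$ on $M$ and $\scal_{\bar g}<0$ on $\bar{M}\without M$. Setting $g=\bar g|_M$ gives $\scal_g\geq c>0$ uniformly on $M$, and a Sobolev inequality $\norm{v}_{L^p(\bar g)}\leq C\norm{\diff v}_{L^2(\bar g)}$ for $v\in C_c^\infty(M)$ then yields $\Yamfunc_g(v)\geq a_n/C^2>0$. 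The main obstacle is precisely this Sobolev inequality: the standard Sobolev inequality on compact $\bar{M}$ carries an extra $\norm{v}_{L^2}$ term, and removing it requires a Poincar\'e-type bound for functions vanishing on the nonempty set $\bar{M}\without M$, which relies on $\bar{M}\without M$ having positive $W^{1,2}$-capacity. When $\bar{M}\without M$ is thin (for instance a finite set of points), the Poincar\'e step fails and one must instead perform a conformal blow-up of $\bar g$ at a point of $\bar{M}\without M$, opening an asymptotically flat end of $M$ on which Sobolev inequalities with explicit constants are available.
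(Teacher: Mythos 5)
Two steps in your treatment of part (\ref{imagetheoremsigma}) fail. First, the metrics you build to reach $-\infty$ are obtained from $g_0$ by compactly supported \emph{conformal} deformations on the balls $B_i$. But $\Yam_M$ is a conformal invariant: for $\tilde g=u^{4/(n-2)}g_0$ one has $\Yamfunc_{\tilde g}(v)=\Yamfunc_{g_0}(uv)\geq\Yam_M(g_0)$, so no conformal factor --- however negative it makes the scalar curvature --- can produce $\Yamfunc_{\tilde g}(v_i)\leq-i$ for all $i$ unless $\Yam_M(g_0)=-\infty$ already. One must change the conformal class; the paper does this by grafting into each ball a long cylinder $N\times[0,3i]$ over a closed manifold $N$ with $\Yam_N<0$ (Lemmas \ref{cylinder} and \ref{balls}). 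Second, your justification of the interval property asserts that \emph{every} affine path $t\mapsto(1-t)g_0+tg_1$ is fine $C^2$-continuous because the difference is globally $C^2$-small. This conflates the uniform topology with the fine topology: as recorded in Facts \ref{topologyfacts}, if $g_0$ and $g_1$ differ outside every compact set then \emph{no} path joins them in the fine topology (which is disconnected). The affine-path argument is valid only when $g_1$ agrees with $g_0$ outside a compact set --- which is exactly how the paper arranges its metrics with $\Yam_M\leq-m$, and is also why $-\infty$ must be attained by a separate gluing over balls escaping to infinity rather than by path-connectedness. (A smaller point: your route to a metric with $\Yam_M\geq0$ via local Kazdan--Warner modifications on annuli is not substantiated; the paper simply invokes Gromov's h-principle to get a positive scalar curvature metric on any manifold all of whose components are noncompact.)

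For part (\ref{imagetheorempositive}) you correctly identify the Sobolev inequality as the crux, but you miss the two devices the paper uses to defuse it. The paper first replaces the given embedding by one for which $\bar M\setminus\iota(M)$ has nonempty interior in each component of $\bar M$; without this your Kazdan--Warner step already breaks down (if $\bar M$ admits no metric of nonnegative scalar curvature and $\bar M\setminus M$ is, say, a finite set, then no $\bar g$ has $\scal_{\bar g}>0$ on the dense subset $M$, since the prescribed function must be negative somewhere on $\bar M\setminus M$). It then prescribes $\scal_{\bar g}\equiv a_n$ on $M$ --- the same constant as the coefficient of the gradient term --- so that for $v\in C^\infty_c(M,\R_{\geq0})$ extended by zero one gets $\Yamfunc_g(v)\,\norm{v}_{L^p(g)}^2=a_n\norm{v}_{H^{1,2}(\bar g)}^2$, and the \emph{ordinary} Sobolev embedding $\norm{\cdot}_{L^p(\bar g)}\leq c\norm{\cdot}_{H^{1,2}(\bar g)}$ on the compact manifold, $L^2$-term included, yields $\Yamfunc_g(v)\geq a_n/c^2$ directly. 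No Poincar\'e inequality, capacity hypothesis, or conformal blow-up at a point is needed, and the thin-complement case that derails your argument never arises.
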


\begin{remarks}{\ } \label{introremarks}
\begin{enumerate}
\item\label{introremarkone} If a metric $g$ on a (possibly noncompact) manifold $M$ of dimension $\geq6$ satisfies $\Yam_M(g)=\sigma(S^n)$, then $g$ is locally conformally flat, by Aubin's local argument \cite{Aubin}, \cite[proof of Thm.~B]{LeeParker}. Whether this generalizes to dimension $3$, $4$, or $5$ is unclear. A simply connected $n$-manifold $M$ with $n\geq3$ admits a locally conformally flat metric if and only if it can be immersed into $S^n$ \cite[pp.~49--50]{SchoenYau1988}. A noncompact connected $n$-manifold can be immersed into $S^n$ if and only if it is parallelizable; cf.\ \ref{parallelizable} below. Thus for many noncompact manifolds $M$ of dimension $n\geq6$ (e.g.\ all simply connected nonparallelizable ones), $\sigma(S^n)$ does not lie in the image of $\Yam_M$.
\item We suspect that $\sigma(M)=\sigma(S^n)$ holds for every noncompact connected $n$-manifold $M$; then for such $M$, the image of $\Yam_M$ would always be either $\coi{-\infty}{\sigma(S^n)}$ or $\cci{-\infty}{\sigma(S^n)}$.
\item By Theorems \ref{imagetheorem}(\ref{imagetheoremsigma}) and \ref{gtheorem}, the image of $\Yaminfty_M$ contains $-\infty$ and a nonnegative number, but no negative real number. Hence it is not an interval. We don't know any other lower or upper bound on the number of ``gaps'' it has. Nor do we know whether there exists a manifold $M$ for which the image of $\Yaminfty_M$ contains an interval of nonzero length. We suspect that every noncompact connected $n$-manifold $M$ admits a Riemannian metric $g$ with $\Yaminfty_M(g)=\sigma(S^n)$. For each such $M$ which is diffeomorphic to an open subset of a compact manifold, this is true: \cite[Theorem 3.1]{Kim2000} implies that $\Yaminfty_M(g)=\sigma(S^n)$ holds for any $g$ which is the pullback of a metric on the compact manifold (note that the completeness assumption in that theorem is irrelevant because each conformal class contains a complete metric).
\end{enumerate}
\end{remarks}

In the following Sections \ref{preliminaries}, \ref{topologies}, we review relevant definitions and basic facts, in particular about the Yamabe constant and topologies on $\Metr(M)$. The rest of the article contains the proofs of the theorems and of Example \ref{uniformexample}. The proofs are not presented in the order of the theorem numbers but in such a way that every result has been proved before it is applied in other proofs.

\section{Preliminaries} \label{preliminaries}

\begin{conventions}
$0\in\N$. The words \emph{manifold}, \emph{metric}, \emph{map}, \emph{section} etc.\ mean smooth objects, except when explicitly stated otherwise. Manifolds are pure-dimensional and second countable and do not have a boundary; thus the notions \emph{closed manifold} and \emph{compact manifold} are synonymous.
\end{conventions}

\begin{exhaustions}
Let $M$ be an $n$-manifold. A \emph{compact exhaustion} of $M$ is a sequence $(K_i)_{i\in\N}$ of compact subsets $K_i$ of $M$ such that for every $i\in\N$, \,$K_i$ is contained in the interior of $K_{i+1}$ in $M$, and such that $M=\bigcup_{i\in\N}K_i$.

\smallskip
Every manifold admits a compact exhaustion. Every compact exhaustion $(K_i)_{i\in\N}$ of a \emph{compact} manifold $M$ satisfies $K_i=M$ for all sufficiently large $i$. If a compact exhaustion $(K_i)_{i\in\N}$ of a connected manifold $M$ satisfies $K_{i+1}=K_i\neq\leer$ for some $i$, then $M=K_i$ (because $K_i$ is open, closed and nonempty), thus $M$ is compact.
\end{exhaustions}

\begin{semicont}
Let $X$ be a topological space, let $x\in X$. A function $f\colon X\to\R\cup\set{-\infty}$ is \emph{upper} [resp.\ \emph{lower}] \emph{semicontinuous at $x$} iff the following is true:
\begin{itemize}
\item If $f(x)\in\R$, then for every $\eps\in\R_{>0}$ there exists a neighborhood $U$ of $x$ such that $f(y)\leq f(x)+\eps$ [resp. $f(y)\geq f(x)-\eps$] holds for all $y\in U$.
\item If $f(x)=-\infty$, then for every $c\in\R$ there exists a neighborhood $U$ of $x$ such that $f(y)\leq c$ [resp. $f(y)\geq -\infty$] holds for all $y\in U$.
\end{itemize}
$f$ is \emph{upper} [resp.\ \emph{lower}] \emph{semicontinuous} iff it is upper [resp.\ lower] semicontinuous at each $x\in X$.

\smallskip
In the article \cite{BerardBergery}, the notions of upper and lower semicontinuity are mixed up. This has been corrected in \cite[Proposition 4.31]{Besse}.
\end{semicont}

In addition to the notations which occurred in the introduction, we will use the following ones:
\begin{notations}
Let $M$ be an $n$-manifold.
\begin{itemize}
\item Our sign convention for the Laplacian $\laplace_g\colon C^\infty(M,\R) \to C^\infty(M,\R)$ with respect to a Riemannian metric $g$ is $\laplace_gu = -\divergence_g(\diff u)$, i.e.\ $\laplace_gu = -\sum_{i=1}^n\frac{\partial^2u}{\partial x_i^2}$ in Euclidean space.
\item $\abs{\Ric_g}_g\in C^0(M,\R_{\geq0})$ is defined by $\abs{\Ric_g}_g(x) = \big(\sum_{i,j=1}^n\Ric_g(e_i,e_j)^2\big)^{1/2}$, where $\Ric_g$ is the Ricci tensor of $g$ and $(e_1,\dots,e_n)$ is any $g$-orthonormal basis of $T_xM$.
\item Let $q\in\R_{\geq1}$. The $L^q(g)$-norm of $v\in C^0(M,\R)$ is $\norm{v}_{L^q(g)} \define \big(\int_Mv^q\,\diff\mu_g\big)^{1/q} \in [0,\infty]$, where $\diff\mu_g$ denotes the density on $M$ induced by $g$. The $L^q(g)$-norm of a $1$-form $\alpha$ on $M$ is $\norm{\alpha}_{L^q(g)} \define \norm{\abs{\alpha}_g}_{L^q(g)}$. For a measurable subset $A$ of $M$, the norm $\norm{.}_{L^q(A;g)}$ of a function or $1$-form on $M$ is defined in the same way as $\norm{.}_{L^q(g)}$, just with $\int_A$ instead of $\int_M$.

    \smallskip
\item
For Riemannian metrics $g,h$ on $M$, \,$\F{h}{g}\in C^\infty(M,\R_{>0})$ is defined by $\diff\mu_h = \F{h}{g}\,\diff\mu_g$.
\item For $f\in C^0(M,\R)$, the functions $f_\pm\in C^0(M,\R_{\geq0})$ are defined by $f_+(x)=\max\set{0,f(x)}$ and $f_-(x)=-\min\set{0,f(x)}$, respectively.

    \smallskip
\item Let $k\in\N$. We define the $C^k(g)$-norm of a (smooth) section $h$ in the vector bundle $\Sym^2T^\ast M$ over $M$ by $\norm{h}_{C^k(g)}\define \sum_{i=0}^k\sup\bigset{\abs{\nabla^ih}_g(x) \bigsuchthat x\in M} \in [0,\infty]$, where $\nabla^ih = \nabla\cdots\nabla h$ denotes the $i$th covariant derivative of $h$ with respect to the Levi-Civita connection of $g$.

    \smallskip\noindent
    For $K\subseteq M$, the ``norm'' $\norm{h}_{C^k(K;g)}$ of a section $h$ in $\Sym^2T^\ast M\to M$ is defined in the same way as $\norm{h}_{C^k(g)}$, just with the suprema over $M$ replaced by suprema over $K$. If $K$ is compact, then all values of $\norm{.}_{C^k(K;g)}$ are finite and $\norm{.}_{C^k(g)}$ is indeed a norm, and all such norms induced by different metrics $g$ are equivalent.
\end{itemize}
\end{notations}

\begin{yamabe}
Notation and terminology are not standardized: the letters $\mu$ and $Q$ are often used instead of our $\Yam$, definitions might differ by a factor $a_n$, and some people call $\Yam_M(g)$ the \emph{Yamabe invariant}, whereas others call the $\sigma$-invariant the \emph{Yamabe invariant of $M$}. We therefore avoid the term \emph{Yamabe invariant} entirely. The \emph{Yamabe constant}, \emph{$\sigma$-invariant} terminology and the letter $Y$ seem to become more and more standard anyway.

\smallskip
Let $M$ be a nonempty $n$-manifold. The Yamabe constant is a conformal invariant: For every $g\in\Metr(M)$ and $u\in C^\infty(M,\R_{>0})$, the conformal metric $\tilde{g}\define u^{4/(n-2)}g$ satisfies $\Yamfunc_{\tilde{g}}(v) = \Yamfunc_g(uv)$ for all $v\in C^\infty_c(M,\R_{\geq0})\without\set{0}$, hence $\Yam_M(g) = \Yam_M(\tilde{g})$. (This follows by partial integration from $\diff\mu_{\tilde{g}} = u^{2n/(n-2)}\diff\mu_g$ and $\scal_{\tilde{g}} = u^{-(n+2)/(n-2)}(a_n\laplace_gu +\scal_gu)$ and $\abs{\diff w}_{\tilde{g}}^2 = u^{-4/(n-2)}\abs{\diff w}_g^2$.)

\smallskip
Hence also the Yamabe constant at infinity of a noncompact manifold is a conformal invariant.

\smallskip
$\Yamfunc_g(v) = \Yamfunc_g(cv)$ holds for all $g\in\Metr(M)$ and $c\in\R_{>0}$ and $v\in C^\infty(M,\R_{\geq0})\without\set{0}$. This implies
\[
\Yam_M(g) = \inf\bigset{\Yamfunc_g(v) \bigsuchthat v\in C^\infty_c(M,\R_{\geq0}), \,\norm{v}_{L^{2n/(n-2)}(h)}=1}
\]
for any metric $h\in\Metr(M)$. We will use this fact repeatedly in the present article.

\smallskip
Whenever $g$ is a metric on $M$ and $U$ is a nonempty open subset of $M$, we will denote the Yamabe constant of the restriction of $g$ to $U$ by $\Yam_U(g)$; i.e., we suppress the restriction of the metric in our notation. The same convention applies to $\Yaminfty$.
\end{yamabe}

\begin{factit} \label{increasing}
Let $M,N$ be nonempty $n$-manifolds with $n\geq3$, let $\iota\colon N\to M$ be a smooth embedding. Then each Riemannian metric $g$ on $M$ satisfies $\Yam_N(\iota^\ast g)\geq\Yam_M(g)$. Thus $\sigma(N)\geq\sigma(M)$.
\end{factit}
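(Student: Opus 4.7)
The plan is to show that any admissible test function on $N$ can be transplanted to an admissible test function on $M$ with the same value of $\Yamfunc$. Taking infima then gives the inequality.

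First I would observe that since $N$ and $M$ have the same dimension $n$, the embedding $\iota$ is automatically an open embedding: by invariance of domain (or because an injective immersion between manifolds of equal dimension is a local, hence global, diffeomorphism onto its image), the image $W\define\iota(N)$ is an open subset of $M$, and $\iota\colon N\to W$ is a diffeomorphism that is an isometry from $(N,\iota^\ast g)$ to $(W,g|_W)$.

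Next, given any $v\in C^\infty_c(N,\R_{\geq0})\without\set{0}$, I would define $\tilde{v}\colon M\to\R_{\geq0}$ by $\tilde{v}|_W\define v\compose\iota^{-1}$ and $\tilde{v}\define 0$ on $M\without W$. Since $\supp(v)$ is compact in $N$, the set $\iota(\supp v)$ is compact in $W$, and the extension by zero of a function supported in a compact subset of an open set yields a smooth function on the ambient manifold. Thus $\tilde{v}\in C^\infty_c(M,\R_{\geq0})\without\set{0}$. Because $\iota\colon(N,\iota^\ast g)\to(W,g)$ is an isometry and $\tilde{v}$ is supported in $W$, the change-of-variables formula gives
\[
\int_M\tilde{v}^p\,\diff\mu_g = \int_Wv^p\compose\iota^{-1}\,\diff\mu_g = \int_Nv^p\,\diff\mu_{\iota^\ast g},
\]
and analogously $\int_M\abs{\diff\tilde{v}}_g^2\,\diff\mu_g = \int_N\abs{\diff v}_{\iota^\ast g}^2\,\diff\mu_{\iota^\ast g}$ and $\int_M\scal_g\,\tilde{v}^2\,\diff\mu_g = \int_N\scal_{\iota^\ast g}\,v^2\,\diff\mu_{\iota^\ast g}$, the latter because scalar curvature is a local Riemannian invariant. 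Hence $\Yamfunc_g(\tilde{v}) = \Yamfunc_{\iota^\ast g}(v)$.

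Taking the infimum over all admissible $v$ on $N$ yields $\Yam_M(g)\leq\Yam_N(\iota^\ast g)$. The statement about $\sigma$ then follows immediately: for every $g\in\Metr(M)$ we have $\Yam_M(g)\leq\Yam_N(\iota^\ast g)\leq\sigma(N)$, and taking the supremum over $g$ gives $\sigma(M)\leq\sigma(N)$. There is no real obstacle here; the only point that requires a moment of care is the openness of $\iota(N)$ in $M$ (which relies on $\dim N=\dim M$) and the resulting smoothness of the extension-by-zero.
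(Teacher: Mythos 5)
Your proposal is correct and is essentially the same argument as the paper's: transplant each test function on $N$ to $M$ by extension with zero, observe that the Yamabe functional is unchanged because $\iota$ is an isometry onto its (open) image, and take the infimum. The paper states this in two lines and leaves implicit the points you spell out (openness of $\iota(N)$ and smoothness of the extension by zero), which are indeed the only details requiring care.
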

\begin{proof}
For every $v\in C^\infty_c(N,\R_{\geq0})\without\set{0}$, we consider the function $\hat{v}\in C^\infty_c(M,\R_{\geq0})\without\set{0}$ defined by $\hat{v}\compose\iota = v$ and $\supp(\hat{v}) = \iota(\supp(v))$. Since $\Yamfunc_g(\hat{v}) = \Yamfunc_{\iota^\ast g}(v)$, we obtain $\Yam_N(\iota^\ast g)\geq\Yam_M(g)$.
\end{proof}

\begin{void}
As mentioned in the introduction, $\Yam_M(g)\leq\sigma(S^n)$ holds for every nonempty $n$-manifold $M$ and $g\in\Metr(M)$. This is stated and proved for closed $M$ in \cite[Lemma 3.4]{LeeParker}, and the proof for arbitrary $M$ consists of exactly the same local argument involving test functions with supports in a small ball.
\end{void}

\begin{void} \label{Yaminftywelldef}
Let $M$ be a noncompact $n$-manifold, let $(K_i)_{i\in\N}$ be a compact exhaustion of $M$, let $g\in\Metr(M)$. In the definition of the Yamabe constant at infinity $\Yaminfty_M(g)$, the sequence $\big(\Yam_{M\without K_i}(g)\big)_{i\in\N}$ in $\R\cup\set{-\infty}$ is monotonically increasing by Fact \ref{increasing}, because $M\without K_{i+1}\subseteq M\without K_i$ holds for each $i\in\N$. Since the sequence is also bounded from above by $\sigma(S^n)$, the limit $\lim_{i\to\infty}\Yam_{M\without K_i}(g)$ exists in $[-\infty,\sigma(S^n)]$.

\smallskip
Let $(K'_i)_{i\in\N}$ be another compact exhaustion of $M$. For every $i\in\N$, there exists a number $j(i)\in\N$ with $K'_i\subseteq K_{j(i)}$. Fact \ref{increasing} yields $\Yam_{M\without K'_i}(g) \leq \Yam_{M\without K_{j(i)}}(g) \leq \lim_{j\to\infty}\Yam_{M\without K_j}(g)$ for each $i$, hence $\lim_{i\to\infty}\Yam_{M\without K'_i}(g) \leq \lim_{i\to\infty}\Yam_{M\without K_i}(g)$. For symmetry reasons the reversed inequality holds as well. Thus $\Yaminfty_M(g)$ does not depend on the chosen exhaustion, as we claimed in the introduction.
\end{void}

\begin{remark}
Recall that we did not define $\Yam_M$ in the case when $M$ is empty; thus $\Yaminfty_M(g)$ is defined only for noncompact manifolds (because every compact exhaustion of a compact manifold $M$ is eventually constant $M$). For a fixed dimension $n$, a natural choice in the case $M=\leer$ would be $\Yam_\leer(g)\define\sigma(S^n)$ for the unique $g\in\Metr(\leer)$. Then the assumption of $M$ being nonempty could be omitted in the Theorems \ref{cocontinuity}, \ref{uniformcontinuity} and \ref{finecontinuity}. Moreover, $\Yaminfty_M(g)$ would be defined in the same way as above for each metric $g$ on a closed $n$-manifold $M$, and it would be equal to $\sigma(S^n)$.
\end{remark}

\begin{remark}
Without further comment we will often use Hölder's inequality in the following form: For $n\in\N_{\geq3}$, let $p=\frac{2n}{n-2}$. Then
\begin{align*}
\norm{v^2w}_{L^1(g)} &\leq \norm{v}_{L^p(g)}^2\norm{w}_{L^{n/2}(g)}
\end{align*}
hold for all manifolds $M$ and $g\in\Metr(M)$ and $v,w\in C^0(M,\R)$, because $1 = \frac{2}{p}+\frac{2}{n}$.
\end{remark}

In Remark \ref{introremarks}(\ref{introremarkone}), we made the following claim:
\begin{factit} \label{parallelizable}
Let $n\geq0$. A noncompact connected $n$-manifold can be immersed into $S^n$ if and only if it is parallelizable.
\end{factit}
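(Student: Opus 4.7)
The plan is to argue the two directions separately, using the Hirsch--Smale immersion theorem in one direction and a stable-triviality plus homotopy-dimension argument in the other.

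For the direction ``parallelizable $\implies$ immersion into $S^n$'', I would apply the Hirsch--Smale immersion theorem. A connected noncompact smooth manifold without boundary is open in Hirsch's sense, so an immersion $M\to\R^n$ exists whenever there exists a fiberwise injective bundle homomorphism $TM\to T\R^n$. A global trivialization of $TM$ is manifestly such a homomorphism, yielding an immersion $M\to\R^n$ which composes with the standard open embedding $\R^n\hookrightarrow S^n$ to give the required immersion into $S^n$.

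For the converse, suppose $\iota\colon M\to S^n$ is an immersion. Since $\dim M=\dim S^n$, $\iota$ is a local diffeomorphism and $TM\cong\iota^\ast TS^n$ as vector bundles. The embedding $S^n\subset\R^{n+1}$ gives $TS^n\oplus\epsilon^1\cong\epsilon^{n+1}$ via the outward unit normal, where $\epsilon^k$ denotes the trivial rank-$k$ real vector bundle; pulling back by $\iota$ yields $TM\oplus\epsilon^1\cong\epsilon^{n+1}$, so that $TM$ is stably trivial. To upgrade this stable triviality to actual triviality I would combine two classical inputs: (i) every connected noncompact smooth $n$-manifold has the homotopy type of a CW complex of dimension at most $n-1$ (e.g.\ via a proper Morse function arranged to have no critical points of index $n$, which can be done on open manifolds by pushing top-index critical points ``off to infinity''); and (ii) the stabilization map $BO(n)\to BO(n+1)$ has homotopy fiber $O(n+1)/O(n)\cong S^n$, which is $(n-1)$-connected, so the induced map $[M,BO(n)]\to[M,BO(n+1)]$ is a bijection whenever $M$ has homotopy dimension $\leq n-1$. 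Rank-$n$ bundles over such $M$ are therefore classified by their $\epsilon^1$-stabilizations, and the triviality of $TM\oplus\epsilon^1$ forces the triviality of $TM$, i.e., parallelizability of $M$.

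The only non-formal ingredients are the Hirsch--Smale theorem (whose hypothesis requires $M$ to have no closed component, which is provided by the connected-noncompact assumption) and the homotopy-dimension statement for open $n$-manifolds; both are classical but should be cited explicitly. The remaining steps are routine bundle-theoretic manipulations, and the main subtlety is the correct invocation of these two inputs rather than any calculation.
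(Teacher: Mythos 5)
Your proof is correct, and the forward direction (parallelizable $\implies$ immersion) coincides with the paper's: both invoke Smale--Hirsch for open manifolds to immerse $M$ into $\R^n$ and then include $\R^n$ into $S^n$. The converse is where you genuinely diverge. The paper argues geometrically: given an immersion $f\colon M\to S^n$, it removes the discrete closed set $f^{-1}(\set{x})$, uses a triangulation plus a theorem of Hirsch to find an open subset of the complement that is diffeomorphic to $M$, and thereby produces an immersion of $M$ into $S^n\without\set{x}\cong\R^n$, whose pullback of the standard frame trivializes $TM$. You instead argue bundle-theoretically: $TM\cong\iota^\ast TS^n$ is stably trivial, and since an open connected $n$-manifold has the homotopy type of an $(n-1)$-complex while $BO(n)\to BO(n+1)$ is $n$-connected, stable triviality already implies triviality. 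Both routes are sound, and interestingly they lean on the same underlying phenomenon -- that an open $n$-manifold can be ``pushed off'' its top-dimensional cells -- but package it differently: the paper uses it to produce an honest immersion into $\R^n$ (a slightly stronger geometric conclusion than parallelizability alone), whereas you use only its homotopy-theoretic shadow (homotopy dimension $\leq n-1$) and pay for that with classifying-space machinery. Your version has the advantage of not needing the somewhat delicate step of finding a diffeomorphic copy of $M$ avoiding $f^{-1}(\set{x})$; its cost is that the two classical inputs you flag (the homotopy-dimension statement, whose Morse-theoretic justification by ``pushing critical points to infinity'' is itself not entirely trivial, and the stabilization bijection) would indeed need explicit references, just as the paper cites Hirsch for its inputs.
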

\begin{proof}
Let $M$ be a noncompact connected $n$-manifold. First we prove that $M$ can be immersed into $S^n$ if and only if it can be immersed into $\R^n$. The ``if'' part is obvious. For ``only if'', let $f\colon M\to S^n$ be an immersion, let $x\in S^n$. The set $D\define f^{-1}(\set{x})$ is discrete and closed in $M$ because $f$ is a local diffeomorphism. Since $M$ is noncompact and connected, there exists an open subset $M'$ of $M\without D$ which is diffeomorphic to $M$ (choose a smooth triangulation of $M$, use a diffeomorphism $M\to M$ to move all elements of $D$ away from the $(n-1)$-skeleton, and apply \cite[Theorem 3.7]{Hirsch1961}). The map $f\restrict_{M'}\colon M\cong M'\to S^n\without\set{x}\cong\R^n$ is an immersion.

\smallskip
It remains to prove that $M$ can be immersed into $\R^n$ if and only if it is parallelizable. The ``only if'' part is true because the immersion pullback of a tangent frame on $\R^n$ is a tangent frame on $M$. The ``if'' part is an application of Smale--Hirsch immersion theory; cf.\ \cite[Theorem 4.7]{Hirsch1961}.
\end{proof}

\section{The three topologies} \label{topologies}

In this section we briefly review the compact-open and fine $C^k$-topologies. (The latter is also known as the \emph{strong} or \emph{Whitney} $C^k$-topology \cite{Hirsch}; we follow Gromov \cite{Gromov} in calling it the \emph{fine} $C^k$-topology.) After that, we define another natural topology on the set of Riemannian metrics, which we call the \emph{uniform $C^k$-topology}. It has probably been considered in the literature before, but we don't know where.

\begin{definition}[the fine $C^k$-topology]
Let $E$ be a fiber bundle over a manifold $M$, let $k\in\N\cup\set{\infty}$. The \emph{fine $C^k$-topology} on the set of (smooth) sections in $E$ is defined by declaring at each section $s$ a neighborhood basis $\mathscr{B}_k(s)$ as follows \cite[p.~9]{Spring}. A section $\xi$ in the $k$-jet bundle $J^kE$ over $M$ can be identified with its graph, i.e.\ with the image $\graph(\xi)$ of $\xi$ in the total space of $J^kE$. We define $\mathcal{U}_k(s)$ to be the set of open neighborhoods of $\graph(j^ks)$ in the total space of $J^kE$. For $U\in\mathcal{U}_k(s)$, we consider the set $\mathcal{N}_U$ of sections $\tilde{s}$ in $E$ with $\graph(j^k\tilde{s})\subseteq U$. Then
\[
\mathscr{B}_k(s) \define \bigset{\mathcal{N}_U \bigsuchthat U\in\mathcal{U}_k(s)} .
\]
$\Metr(M)$ is the set of sections in the fiber bundle $\Sym^2_+T^\ast M$ over $M$, whose fiber over $x$ consists of the positive definite symmetric bilinear forms on $T_x^\ast M$. Thus a fine $C^k$-topology is defined on $\Metr(M)$.
\end{definition}

\begin{examples} \label{fineexamples}
Let $E$ be a fiber bundle over a manifold $M$, let $k\in\N\cup\set{\infty}$, let $F\in C^0(J^kE,\R_{\geq0})$, let $\eps\in C^0(M,\R_{>0})$, let $s$ be a section in $E$ with $F\compose j^ks = 0$. Then the set of sections $\tilde{s}$ in $E$ with $F\compose j^k\tilde{s} < \eps$ is an open neighborhood of $s$ with respect to the fine $C^k$-topology: since $F$, $\eps$ and the projection $\pr\colon J^kE\to M$ are continuous, the set $U = \set{\eta\in J^kE \suchthat F(\eta)<\eps(\pr(\eta))}$ is an open neighborhood of $\graph(j^ks)$, and thus the set $\mathcal{N}_U$ of sections $\tilde{s}$ in $E$ with $F\compose j^k\tilde{s} < \eps$ is fine $C^k$-open.

\smallskip
For instance, let $g\in\Metr(M)$. If $F\colon J^2\Sym^2_+T^\ast M\to\R$ is one of the following maps, then the set of $h\in\Metr(M)$ with $F\compose j^2h < \eps$ is an open neighborhood of $g$ with respect to the fine $C^2$-topology:
\begin{enumerate}
\item $F\colon j^2_xh \mapsto \abs{\scal_h(x)-\scal_g(x)}$.
\item $F\colon j^2_xh \mapsto \max\bigset{\bigabs{\abs{\alpha}_h^2-1} \bigsuchthat \alpha\in T^\ast_xM, \,\abs{\alpha}_g=1}$.
\item $F\colon j^2_xh \mapsto \bigabs{\F{h}{g}(x)-1}$.
\item $F\colon j^2_xh \mapsto \bigabs{\diff\big(\F{h}{g}\big)}_g(x)$.
\item $F\colon j^2_xh \mapsto \bigabs{\laplace_g\big(\F{h}{g}\big)(x)}$.
\end{enumerate}
(The maps (2), (3) even define fine $C^0$-neighborhoods, and (4) defines a fine $C^1$-neighborhood. But we will later use only that they are fine $C^2$-neighborhoods.) All these maps $F$ are well-defined because the right-hand sides contain at most second derivatives of $h$, and the continuity is easy to check in each case.
\end{examples}

\begin{definition}[the compact-open $C^k$-topology]
For topological spaces $X,Y$, the compact-open topology on the set of continuous maps $X\to Y$ is well-known. Let $E$ be a fiber bundle over a manifold $M$, let $k\in\N\cup\set{\infty}$. We consider the map $j^k$ from the set of (smooth) sections in $E$ to sections in $J^kE$ which sends each $s$ to its $k$-jet prolongation $j^ks$, and we equip the set of sections in $J^kE$ with the subspace topology of the compact-open topology on the space of continuous maps $M\to J^kE$. The \emph{compact-open $C^k$-topology} on the set of sections in $E$ is the coarsest topology which makes $j^k$ continuous.
\end{definition}

The following basic facts are well-known \cite[p.~35--36]{Hirsch}:
\begin{facts} \label{topologyfacts}
Let $k\in\N\cup\set{\infty}$. The compact-open $C^k$-topology on $\Metr(M)$ is metrizable and path-connected (for $g_0,g_1\in\Metr(M)$, the path $(g_t)_{t\in[0,1]}$ given by $g_t\define (1-t)g_0 +tg_1$ is continuous). For $k<\infty$, a sequence $(g_i)_{i\in\N}$ in $\Metr(M)$ converges to $g\in\Metr(M)$ with respect to the compact-open $C^k$-topology if and only if for some (and hence every) auxiliary metric $h\in\Metr(M)$ (e.g.\ $h=g$) and for every compact subset $K$ of $M$, the sequence $(\norm{g_i-g}_{C^k(K;h)})_{i\in\N}$ converges to $0$. If $M$ is compact, then the fine $C^k$-topology on $\Metr(M)$ is equal to the compact-open $C^k$-topology on $\Metr(M)$. If $M$ is noncompact, then the fine $C^k$-topology on $\Metr(M)$ is (much) finer than the compact-open $C^k$-topology. For instance it is neither first countable (hence not metrizable) nor connected. For metrics $g_0,g_1\in\Metr(M)$ which differ outside each compact subset of $M$, every path from $g_0$ to $g_1$, in particular the map $[0,1]\to\Metr(M)$ given by $t\mapsto(1-t)g_0+tg_1$, is not fine $C^k$-continuous. The compact-open (resp.\ fine) $C^\infty$-topology (considered as a set of open sets) on $\Metr(M)$ is the union of all compact-open (resp.\ fine) $C^k$-topologies on $\Metr(M)$ with $k\in\N$. For $l\in\N\cup\set{\infty}$ with $l\geq k$, the compact-open $C^l$-topology on $\Metr(M)$ is finer than the compact-open $C^k$-topology, and the fine $C^l$-topology on $\Metr(M)$ is finer than the fine $C^k$-topology.
\end{facts}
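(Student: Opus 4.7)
The collection of assertions is entirely at the level of standard point-set topology on jet spaces, so the plan is to dispatch each one briefly and isolate the only genuine technical ingredient. Throughout I will fix an auxiliary metric $h\in\Metr(M)$ and a compact exhaustion $(K_i)_{i\in\N}$ of $M$.

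First I would handle the compact-open $C^k$-topology. Metrizability for $k<\infty$ is provided by the distance $d(g,g')\define\sum_{i\in\N}2^{-i}\min\set{1,\norm{g-g'}_{C^k(K_i;h)}}$, and for $k=\infty$ by summing in addition over the order of derivatives. The characterization of convergence then reduces to the statement that the compact-open topology on $C(M,J^kE)$ is the topology of uniform convergence on compact subsets of $M$. Path-connectedness is witnessed by $g_t\define(1-t)g_0+tg_1$: the pointwise convex combination of positive definite symmetric bilinear forms is positive definite, and $t\mapsto g_t$ is continuous in $d$ since $g_t-g_{t'}=(t-t')(g_1-g_0)$. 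Monotonicity of the $C^k$-topologies in $k$ and the description of the $C^\infty$-topology as the union of the $C^k$-topologies follow by pulling back basic open sets along the natural projections $J^lE\to J^kE$.

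Next I would treat the comparison of the two topologies. If $M$ is compact, then $\graph(j^kg)\subseteq J^kE$ is compact, so every open neighborhood $U$ of this graph contains a uniform $\delta$-tube (with respect to any fixed bundle metric on $J^kE$ and any local trivializations over a finite subcover of $M$), and conversely every such tube defines an admissible $U$; hence fine and compact-open coincide on $\Metr(M)$. If $M$ is noncompact, the additional freedom in the fine topology is exactly what Examples \ref{fineexamples} illustrate: the error is controlled by an arbitrary $\eps\in C^0(M,\R_{>0})$ rather than a constant on each compact set. To obtain non-first-countability at a metric $g$ I would argue by diagonalization: given any countable family of fine neighborhoods $\mathcal{N}_{U_n}$ of $g$, pick points $x_n\in M$ escaping every compact set together with radii $\delta_n>0$ such that a $\delta_n$-tube around $j^kg(x_n)$ lies in $U_n$, then choose $\eps\in C^0(M,\R_{>0})$ with $\eps(x_n)<\delta_n/2$. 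The induced fine neighborhood $\mathcal{N}_V$ of $g$ is not contained in any single $\mathcal{N}_{U_n}$, because for each $n$ one can produce a perturbation $h_n$ of $g$ supported in a small ball around $x_n$ with $\norm{h_n-g}_{C^k}$ much smaller than $\eps(x_n)$ but with $\graph(j^kh_n)\not\subseteq U_n$ by the choice of $\delta_n$.

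For the last two assertions in the noncompact case I would use a single construction. If $g_0,g_1\in\Metr(M)$ differ outside every compact subset, then the tensor field $g_1-g_0$ has non-precompact support, so at any $t_0\in[0,1]$ and any $\eps\in C^0(M,\R_{>0})$ decaying sufficiently fast at infinity, the fine $C^0$-neighborhood of $g_{t_0}$ defined by $\abs{\tilde g-g_{t_0}}_h<\eps$ contains no $g_t=(1-t)g_0+tg_1$ with $t\neq t_0$, since $g_t-g_{t_0}=(t-t_0)(g_1-g_0)$ retains the non-compact support; hence $t\mapsto g_t$ fails to be fine $C^k$-continuous at $t_0$. The same mechanism shows that the set of metrics which coincide with $g_0$ outside some compact set is both fine-open and fine-closed, producing a nontrivial clopen subset and hence disconnecting $\Metr(M)$. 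The only real obstacle in the whole proof is keeping the diagonal construction in the non-first-countability argument honest: concretely, exhibiting perturbations $h_n\in\mathcal{N}_V\without\mathcal{N}_{U_n}$ by engineering the supports, the $\delta_n$, and $\eps$ simultaneously; once this is done all other items reduce to routine manipulation of neighborhood bases in jet bundles.
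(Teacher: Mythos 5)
The paper offers no proof of these facts at all --- it simply cites Hirsch, pp.~35--36 --- so your proposal can only be judged on its own correctness. The routine items (metrizability via the exhaustion metric, convergence as uniform $C^k$-convergence on compacta, path-connectedness of the compact-open topology, the coincidence of the two topologies for compact $M$, monotonicity in $k$ and the description of the $C^\infty$-topologies as unions) are all fine. But the two points you yourself flag as the only delicate ones are exactly where your argument breaks.

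The non-first-countability argument has the containment reversed. To show that a countable family $\set{\mathcal{N}_{U_n}}_{n\in\N}$ is not a neighborhood basis at $g$, you must produce a neighborhood $W$ of $g$ with $\mathcal{N}_{U_n}\not\subseteq W$ for every $n$, i.e.\ exhibit $h_n\in\mathcal{N}_{U_n}\without W$. You instead aim for $h_n\in\mathcal{N}_V\without\mathcal{N}_{U_n}$, which only shows $\mathcal{N}_V\not\subseteq\mathcal{N}_{U_n}$ and says nothing about the family being a basis. Moreover the construction cannot deliver even that: a perturbation of $g$ supported in a small ball around $x_n$ and of size less than $\eps(x_n)<\delta_n/2$ has its $k$-jet graph inside the $\delta_n$-tube around $j^k_{x_n}g$, hence inside $U_n$, and coincides with $\graph(j^kg)$ elsewhere; so $\graph(j^kh_n)\subseteq U_n$, the opposite of what you assert. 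Your ingredients do suffice after reassembly: take $h_n$ equal to $g$ outside a small ball around $x_n$ and differing from $g$ at $x_n$ by an amount strictly between $\eps(x_n)$ and $\delta_n$; then $h_n\in\mathcal{N}_{U_n}$ but $h_n\notin\mathcal{N}_V$, which is the inclusion failure actually needed.

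The disconnectedness argument also fails as stated: the set $\mathcal{C}$ of metrics coinciding with $g_0$ outside some compact set is fine-closed but \emph{not} fine-open. Any fine neighborhood $\mathcal{N}_U$ of an $h\in\mathcal{C}$ contains metrics $h+\phi$ where $\phi$ is a small symmetric $2$-tensor of noncompact support whose jet graph stays inside $U$, and these lie outside $\mathcal{C}$. So $\mathcal{C}$ neither disconnects $\Metr(M)$ nor, by itself, rules out arbitrary continuous paths from $g_0$ to $g_1$ (for that one shows, by the same ``choose $\eps$ decaying faster than the discrepancy'' trick you use for the linear path, that a fine-continuous path is locally constant outside a compact set, and then chains over $[0,1]$). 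A set that genuinely is clopen is $\bigset{h\in\Metr(M) \bigsuchthat \norm{h-g_0}_{C^0(g_0)}<\infty}$: the fine $C^0$-neighborhood $\bigset{h' \bigsuchthat \abs{h'-h}_{g_0}<1}$ of any $h$ lies entirely inside or entirely outside it, and it is proper because $fg_0$ with unbounded $f$ lies outside it.
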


Consider a section $s_0$ in a fiber bundle $E$ over a noncompact manifold $M$. Each neighborhood of $s_0$ with respect to the compact-open $C^0$-topology contains sections $s$ such that the values $s(x)$ and $s_0(x)$ are, intuitively speaking, farther and farther away as $x$ tends to infinity in $M$. Whereas, again intuitively speaking, for each element $s$ of a typical neighborhood of $s_0$ with respect to the \emph{fine} $C^0$-topology, the values $s(x)$ and $s_0(x)$ become closer and closer as $x$ tends to infinity in $M$. (Similar intuitive statements involving derivatives of $s_0,s$ apply to the higher $C^k$-topologies.) A topology with the property that, for a typical element $s$ of a typical neighborhood of $s_0$, the distance of $s_0(x),s(x)$ stays uniform as $x$ tends to infinity can in general make sense only after one has equipped the fibers of $E$ with an auxiliary metric which defines what is meant by ``distance'' and ``uniform''. The resulting topology will then depend strongly on that auxiliary metric. But in the special situation where $E=\Sym^2_+T^\ast M$, a uniform topology can be defined without reference to an auxiliary metric:

\begin{definition}[the uniform $C^k$-topology]
Let $M$ be a manifold, let $k\in\N$. We define the \emph{uniform $C^k$-topology} on $\Metr(M)$ by declaring at each $g\in\Metr(M)$ a neighborhood basis $\mathscr{B}_k'(g)$: for $\eps\in\R_{>0}$, we let $\mathcal{N}_{g,\eps,k}\define \bigset{h\in\Metr(M) \bigsuchthat \norm{h-g}_{C^k(g)}<\eps}$ and $\mathscr{B}_k'(g)\define \set{\mathcal{N}_{g,\eps,k} \suchthat \eps\in\R_{>0}}$. We define the \emph{uniform $C^\infty$-topology} on $\Metr(M)$ to be the union of all uniform $C^k$-topologies (considered as sets of open sets) on $\Metr(M)$ with $k\in\N$.
\end{definition}
\begin{proof}[Proof that this defines a neighborhood basis of a topology on $\Metr(M)$]
Each $\mathscr{B}_k'(g)$ is nonempty, and each $\mathcal{N}_{g,\eps,k}$ contains $g$. For every two elements $\mathcal{N}_{g,\eps_0,k},\mathcal{N}_{g,\eps_1,k}$ of $\mathscr{B}_k'(g)$, the set $\mathcal{N}_{g,\eps_0,k}\cap\mathcal{N}_{g,\eps_1,k}$ contains an element of $\mathscr{B}_k'(g)$, namely $\mathcal{N}_{g,\min(\eps_0,\eps_1),k}$.
\end{proof}

The uniform $C^k$-topologies are natural objects in particular when one considers Riemannian metrics on product manifolds $M\times N$ with compact $M$ and noncompact $N$. The compact-open topologies are much too coarse to control the Yamabe constant even near product metrics, as Theorem \ref{cocontinuity} shows. Whereas the fine topologies are much too fine for instance for a reasonable discussion of $1$-parameter families of product metrics $g_M(t)\oplus g_N$ on $M\times N$, because they make such a $1$-parameter family continuous only if it is constant. In contrast, the uniform $C^k$-topology makes such a $1$-parameter family continuous if and only if $(g_M(t))_{t\in\R}$ is a $C^k$-continuous family (the fine/uniform/compact-open distinction plays no role here because $M$ is compact); moreover, it makes the Yamabe map continuous at many product metrics (provided $k\geq2$), as one can see from Theorem \ref{uniformexample}. This is what one would intuitively expect from a nice topology on $\Metr(M\times N)$. Unfortunately, Example \ref{uniformexample} shows that the uniform topologies do not make $\Yam_{M\times N}$ continuous at \emph{every} product metric.

\begin{factsit} \label{uniformfacts}
Let $M$ be a manifold, let $k,l\in\N\cup\set{\infty}$ with $l\geq k$. The uniform $C^k$-topology on $\Metr(M)$ is coarser than the uniform $C^l$-topology. It is finer than the compact-open $C^k$ topology, and it is coarser than the fine $C^k$-topology; in particular, it is equal to both these topologies if $M$ is compact. If $M$ is noncompact, then the uniform $C^k$-topology on $\Metr(M)$ is neither equal to any compact-open $C^r$-topology nor equal to any fine $C^r$-topology.
\end{factsit}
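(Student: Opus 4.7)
The plan is to handle the five assertions in the statement one by one; four are routine norm comparisons, and the one genuine step (uniform coarser than fine) rests on Examples~\ref{fineexamples}. First, for $l\geq k$ the pointwise inequality $\norm{h-g}_{C^k(g)}\leq\norm{h-g}_{C^l(g)}$ gives $\mathcal{N}_{g,\eps,l}\subseteq\mathcal{N}_{g,\eps,k}$, so the uniform $C^k$-topology is coarser than the uniform $C^l$-topology; the $C^\infty$ case is immediate from the definition of uniform $C^\infty$ as the union of the finite-order uniform topologies. Similarly, for every compact $K\subseteq M$, $\norm{h-g}_{C^k(K;g)}\leq\norm{h-g}_{C^k(g)}$, so each basic compact-open $C^k$-neighborhood $\set{h:\norm{h-g}_{C^k(K;g)}<\eta}$ of $g$ contains the uniform ball $\mathcal{N}_{g,\eta,k}$; this proves that uniform $C^k$ is finer than compact-open $C^k$.

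For uniform $C^k$ coarser than fine $C^k$ the plan is to show that each basic uniform neighborhood $\mathcal{N}_{g,\eps,k}$ is fine $C^k$-open. Given $h\in\mathcal{N}_{g,\eps,k}$, set $\delta=\eps-\norm{h-g}_{C^k(g)}>0$ and, for $i=0,\dots,k$, define $F_i\colon J^k\Sym^2_+T^\ast M\to\R_{\geq0}$ by $F_i(j^k_x\tilde h)=\bigabs{\nabla^i_g(\tilde h-h)}_g(x)$, which is a continuous function of the $k$-jet of $\tilde h$ at $x$ because the ambient $g$ and $h$ are fixed smooth sections. Since $F_i\compose j^kh=0$, Examples~\ref{fineexamples} yields that $V\define\bigcap_{i=0}^k\bigset{\tilde h:F_i\compose j^k\tilde h<\delta/(2(k+1))}$ is a fine $C^k$-open neighborhood of $h$; by the triangle inequality, $\tilde h\in V$ implies $\norm{\tilde h-h}_{C^k(g)}\leq\delta/2$ and hence $\norm{\tilde h-g}_{C^k(g)}<\eps$, so $V\subseteq\mathcal{N}_{g,\eps,k}$. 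The $C^\infty$ case again reduces to finite order via the union definitions. When $M$ is compact all suprema reduce to suprema over $M=K$, so the uniform $C^k$-norm coincides with the compact-open $C^k$-norm on $K=M$, and Facts~\ref{topologyfacts} identifies fine with compact-open.

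For the noncompact non-equality statements I plan to exhibit explicit witnesses. Fix $g_0\in\Metr(M)$. The uniform $C^0$-neighborhood $U=\mathcal{N}_{g_0,1,0}$ is not compact-open $C^r$-open for any $r\in\N\cup\set{\infty}$: otherwise it would contain a basic compact-open neighborhood $W=\bigcap_{j=1}^N\bigset{h:\norm{h-g_0}_{C^r(K_j;g_0)}<\eta_j}$ of $g_0$, but since $\bigcup_jK_j$ is a proper compact subset of the noncompact $M$ one can pick a smooth nonnegative bump $\chi$ with $\supp\chi\cap\bigcup_jK_j=\leer$ and $\chi(x_0)=2/\sqrt n$ for some $x_0\in M$, whence $h=(1+\chi)g_0$ lies in $W$ while $\abs{h-g_0}_{g_0}(x_0)=2$ forces $h\notin U$. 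Conversely, choose $\eps\in C^0(M,\R_{>0})$ with $\inf_M\eps=0$ (possible because $M$ is noncompact); then $V=\bigset{h:\abs{h-g_0}_{g_0}(x)<\eps(x)\text{ for all }x\in M}$ is fine $C^0$-open, hence fine $C^r$-open for every $r$, by Examples~\ref{fineexamples}. It cannot be uniform $C^k$-open: any uniform ball $\mathcal{N}_{g_0,\delta,k}$ contained in $V$ would contain the constant conformal perturbation $h=(1+\delta/(2\sqrt n))g_0$, which satisfies $\nabla^i_{g_0}(h-g_0)=0$ for $i\geq1$ and $\norm{h-g_0}_{C^k(g_0)}=\delta/2<\delta$, yet $\abs{h-g_0}_{g_0}\equiv\delta/2$ exceeds $\eps(x_0)$ at any $x_0$ with $\eps(x_0)<\delta/2$, which exists by the choice of $\eps$. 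The $r=\infty$ cases reduce to finite order since both compact-open and fine $C^\infty$-topologies are defined as unions. The main obstacle is purely bookkeeping: realizing the $F_i$ as continuous functions on the jet bundle so that Examples~\ref{fineexamples} applies verbatim, and tuning the cutoff $\chi$ and the perturbation $h=(1+\chi)g_0$ so that they simultaneously meet the imposed local and global bounds.
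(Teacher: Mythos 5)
Your proof is correct, and the first four assertions are handled essentially as in the paper: the inclusions $\mathcal{N}_{g,\eps,l}\subseteq\mathcal{N}_{g,\eps,k}$, the comparison with compact-open via restriction of the sup-norm to compacta, and the realization of uniform balls as fine-open sets through continuous functionals on the jet bundle (the paper does this only at the center $g$ of the ball, observing that $\mathcal{N}_{g,\eps,k}$ essentially equals a set of the form $\mathcal{N}_U$; your version, which produces a fine neighborhood around every point $h$ of the ball, is if anything slightly more careful about openness). The genuine divergence is in the non-equality statements for noncompact $M$. There the paper argues abstractly: the uniform $C^k$-topology is first countable by construction, while the fine $C^r$-topology is not (Facts \ref{topologyfacts}), so they differ; and for the compact-open comparison it exhibits a path $t\mapsto(1-t)g+tfg$ with $f$ unbounded that is compact-open continuous but not uniform continuous at $0$. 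You instead produce explicit witnessing open sets: a fine $C^0$-open set cut out by a positive function $\eps$ with $\inf_M\eps=0$ that contains no uniform ball (the constant conformal perturbation $(1+\delta/(2\sqrt{n}))g_0$ is the right test object, since its higher covariant $g_0$-derivatives vanish), and a bump function supported outside a given compact set to defeat any compact-open basic neighborhood. Both routes are valid; yours is more elementary and self-contained, the paper's is shorter given the facts it has already recorded. The only point worth making explicit in your write-up is the standard fact that for finite $r$ the sets $\bigset{h\bigsuchthat\norm{h-g}_{C^r(K;g)}<\eta}$ form a neighborhood basis of $g$ for the compact-open $C^r$-topology as defined via the subbase $\mathcal{M}_{K,U,r}$; the paper proves exactly this equivalence (by compactness of $K$) in its treatment of the ``finer than compact-open'' step, whereas you invoke it implicitly both there and in the bump-function argument.
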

\begin{proof}
If $l\in\N$, then every uniform $C^k$-neighborhood $\mathcal{N}_{g,\eps,k}$ of $g\in\Metr(M)$ contains a uniform $C^l$-neighborhood of $g$, namely $\mathcal{N}_{g,\eps,l}$. Thus the uniform $C^k$-topology is coarser than the uniform $C^l$-topology if $l\in\N$. The same holds by definition of the uniform $C^\infty$-topology also for $l=\infty$.

\smallskip
Every uniform $C^k$-neighborhood $\mathcal{N}_{g,\eps,k}$ is a fine $C^k$-neighborhood of $g$: since $\abs{\nabla^i(h-g)}_g(x)$ depends continuously on $j^i_xh$, there exists a neighborhood $U$ of $\graph(j^kg)$ in $J^k\Sym^2_+T^\ast M$ such that the elements of
\[
\mathcal{N}_{g,\eps,k} = \Bigset{h\in\Metr(M) \Bigsuchthat \textstyle\sum_{i=0}^k\sup\bigset{ \abs{\nabla^i(h-g)}_g(x) \bigsuchthat x\in M} < \eps}
\]
are precisely those $h\in\Metr(M)$ with $\graph(j^kh)\subseteq U$. Thus the uniform $C^k$-topology is coarser than the fine $C^k$-topology.

\smallskip
For $K\subseteq M$ and $U\subseteq J^k\Sym^2_+T^\ast M$, let $\mathcal{M}_{K,U,k}\define \set{h\in\Metr(M) \suchthat \forall x\in K\colon j^k_xh\in U}$. By definition of the compact-open $C^k$-topology, the sets $\mathcal{M}_{K,U,k}$ such that $K\subseteq M$ is compact and $U\subseteq J^k\Sym^2_+T^\ast M$ is open form a subbase of the compact-open $C^k$-topology. We claim that each of these subbase elements is uniform $C^k$-open. In order to check this, we consider an element $g$ of $\mathcal{M}_{K,U,k}$. Since $U$ is open and $K$ is compact, there exists an $\eps\in\R_{>0}$ such that $\abs{\nabla^i(h-g)}_g(x)<\frac{\eps}{k+1}$ holds for all $h\in\mathcal{M}_{K,U,k}$ and $x\in K$ and $i\in\set{0,\dots,k}$; here $\nabla$ denotes the Levi-Civita connection of $g$. Therefore the uniform $C^k$-open set $\mathcal{N}_{g,\eps,k}$ is obviously contained in $\mathcal{M}_{K,U,k}$. As this is true for every $g\in\mathcal{M}_{K,U,k}$, the set $\mathcal{M}_{K,U,k}$ is indeed uniform $C^k$-open. This proves that the uniform $C^k$-topology is finer than the compact-open $C^k$-topology.

\smallskip
The uniform $C^k$-topology is not equal to any fine $C^r$-topology if $M$ is noncompact, because the uniform $C^k$-topology is by definition first countable, whereas the fine $C^r$-topology is not if $M$ is noncompact; cf.\ Facts \ref{topologyfacts}.

\smallskip
The uniform $C^k$-topology is not equal to any compact-open $C^r$-topology if $M$ is noncompact: We take any metric $g$ on $M$ and any $f\in C^\infty(M,\R_{>0})$ which is not bounded from above, and we consider $\gamma\colon[0,1]\to\Metr(M)$ given by $\gamma(t)\define (1-t)g +tfg$. This $\gamma$ is compact-open $C^r$-continuous at $0$, because $\lim_{t\to0}\norm{\gamma(t)-\gamma(0)}_{C^r(K;\gamma(0))} = \lim_{t\searrow0}t\norm{(f-1)g}_{C^r(K;g)} = 0$ holds for every compact subset $K$ of $M$. But $\gamma$ is not uniform $C^k$-continuous at $0$: For the neighborhood $\mathcal{N}_{g,1,k}\subseteq \mathcal{N}_{g,1,0}$ of $g=\gamma(0)$, there does not exist any $\delta\in\R_{>0}$ with $\forall t\in[0,\delta]\colon \gamma(t)\in\mathcal{N}_{g,1,k}$. That's because
\[
\bignorm{\gamma(t)-g}_{C^0(g)} = t\bignorm{(1-f)g}_{C^0(g)}
= t\sup_{x\in M} \bigabs{(1-f)g}_g(x)
= t\sup_{x\in M} \sqrt{\dim(M)}\,\abs{f(x)-1} = \infty
\]
for each $t\in\R_{>0}$. Thus the uniform $C^k$-topology differs indeed from the compact-open topologies.
\end{proof}

We leave it to the interested reader to state and prove further properties of the uniform $C^k$-topology. In the present article it serves only as an instructive intermediate step between the compact-open and fine topologies which clarifies nicely the continuity properties of the Yamabe map, in particular at product metrics on product manifolds one of whose factors is compact. All we have to know in that context are the facts listed above and Lemma \ref{uniformlemma} below.

\section{Proof of upper semicontinuity}

The proof of the following fact generalizes directly the one for closed manifolds \cite[Proposition 7.2]{BerardBergery}.

\begin{lemma} \label{coupper}
Let $M$ be a nonempty manifold of dimension $n\geq3$. Let $\Metr(M)$ be equipped with the compact-open $C^2$-topology. Then $\Yam_M$ is upper semicontinuous. In particular, $\Yam_M$ is continuous at each metric $g$ with $\Yam_M(g)=-\infty$.
\end{lemma}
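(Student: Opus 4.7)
The plan is to exploit the standard upper-semicontinuity-of-infima principle: fix one test function that almost attains $\Yam_M(g)$ and show that $\Yamfunc_h(v)$ depends continuously on $h$ in the compact-open $C^2$-topology.

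More concretely, I would argue as follows. Suppose first that $\Yam_M(g)\in\R$, and fix $\eps\in\R_{>0}$. By definition of the infimum, choose $v\in C^\infty_c(M,\R_{\geq0})\without\set{0}$ with
\[
\Yamfunc_g(v) \leq \Yam_M(g) + \tfrac{\eps}{2} .
\]
Let $K\define\supp(v)$, a compact subset of $M$. The value $\Yamfunc_h(v)$ only sees the metric $h$ through its restriction to $K$: the numerator $\int_M\bigl(a_n\abs{\diff v}_h^2+\scal_hv^2\bigr)\diff\mu_h$ and the $L^p(h)$-norm of $v$ are all integrals of quantities vanishing outside $K$. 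Each integrand depends on $j^2h\restrict_K$ in a way that is continuous as a function of $j^2_xh$ pointwise ($\abs{\diff v}_h^2$ and $\diff\mu_h$ depend on the $0$-jet of $h$, while $\scal_h$ depends on the $2$-jet). Hence if $h\to g$ uniformly with its derivatives up to order $2$ on $K$, dominated convergence gives $\Yamfunc_h(v)\to\Yamfunc_g(v)$.

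Therefore the set
\[
U \define \bigset{h\in\Metr(M) \bigsuchthat \Yamfunc_h(v) < \Yamfunc_g(v)+\tfrac{\eps}{2}}
\]
is a compact-open $C^2$-neighborhood of $g$, since compact-open $C^2$-convergence is equivalent to uniform $C^2$-convergence on every compact set (in particular on $K$), by \ref{topologyfacts}. For every $h\in U$ one has
\[
\Yam_M(h) \leq \Yamfunc_h(v) < \Yamfunc_g(v)+\tfrac{\eps}{2} \leq \Yam_M(g)+\eps ,
\]
proving upper semicontinuity at $g$. If $\Yam_M(g)=-\infty$, the same argument works: for any prescribed $c\in\R$ pick $v$ with $\Yamfunc_g(v)\leq c-1$ and repeat. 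Lower semicontinuity at such a $g$ is vacuous by the definition recalled in the \textit{Upper and lower semicontinuity} paragraph, so continuity at $g$ follows.

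The only step that requires any real work is the continuity of $h\mapsto\Yamfunc_h(v)$ on $K$. This is routine because $v$ is fixed with compact support, so all integrals are taken over $K$; the integrands $a_n\abs{\diff v}_h^2+\scal_hv^2$ and the density $\F{h}{g}$ are polynomial (respectively rational) expressions in a finite number of $C^2$-data of $h$, which converge uniformly on $K$ to the corresponding quantities for $g$ whenever $\norm{h-g}_{C^2(K;g)}\to 0$. I do not anticipate any real obstacle; the only subtlety is keeping the argument independent of a priori upper bounds on $\Yamfunc_g(v)$ when treating the $\Yam_M(g)=-\infty$ case, which is handled by the choice of $v$ above.
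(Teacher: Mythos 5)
Your proof is correct and follows essentially the same route as the paper's: both reduce the claim to the continuity of $h\mapsto\Yamfunc_h(v)$ for a fixed compactly supported test function $v$, established via uniform $C^2$-convergence on $\supp(v)$ (using metrizability of the compact-open $C^2$-topology). The only cosmetic difference is that the paper invokes the general fact that a pointwise infimum of continuous functions is upper semicontinuous (citing Bourbaki), whereas you inline the corresponding $\eps$-argument.
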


\begin{proof}
For each $v\in C_c^\infty(M,\R_{\geq0})\without\set{0}$, the map $\Metr(M)\to\R$ given by $g\mapsto \Yamfunc_g(v)$ is continuous with respect to the compact-open $C^2$-topology: Since this topology is metrizable, it suffices to show that whenever a sequence $(g_i)_{i\in\N}$ in $\Metr(M)$ converges to $g$, then $\lim_{i\to\infty}\Yamfunc_{g_i}(v) = \Yamfunc_g(v)$. For the compact set $K\define\supp(v)$, the convergence of $(g_i)_{i\in\N}$ to $g$ implies $\lim_{i\to\infty}\norm{g_i-g}_{C^2(K;g)}=0$, which yields obviously $\lim_{i\to\infty}\norm{\scal_{g_i}-\scal_g}_{C^0(K)} = 0$ and $\lim_{i\to\infty}\bignorm{\abs{\diff v}_{g_i}^2 -\abs{\diff v}_g^2}_{C^0(K)} = 0$ and $\lim_{i\to\infty}\bignorm{\F{g_i}{g}-1}_{C^0(K)} = 0$, thus $\lim_{i\to\infty}\Yamfunc_{g_i}(v) = \Yamfunc_g(v)$. Hence $g\mapsto \Yamfunc_g(v)$ is indeed continuous.

\smallskip
Recall that whenever $X$ is a topological space and $Y$ is a nonempty set and $f\colon X\times Y\to\R$ has the property that $f(.,y)\colon X\to\R$ is continuous for every $y\in Y$, then the map $X\to\R\cup\set{-\infty}$ given by $x\mapsto \inf\set{f(x,y) \suchthat y\in Y}$ is upper semicontinuous \cite[\S IV.6.2, Corollary to Thm.~4]{BourbakiGT1}. Applying this to $X=\Metr(M)$ and $Y=C_c^\infty(M,\R_{\geq0})\without\set{0}$ and $f\colon (g,v)\mapsto E_g(v)$, we see that $\Yam_g$ is upper semicontinuous with respect to the compact-open $C^2$-topology.
\end{proof}

\begin{corollary} \label{upper}
Let $M$ be a nonempty manifold of dimension $\geq3$, let $k\in\N_{\geq2}\cup\set{\infty}$. Let $\Metr(M)$ be equipped either with the compact-open $C^k$-topology or with the uniform $C^k$-topology or with the fine $C^k$-topology. Then $\Yam_M$ is upper semicontinuous. It is continuous at each metric $g$ with $\Yam_M(g)=-\infty$.
\end{corollary}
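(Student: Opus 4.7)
The proof is essentially immediate from Lemma \ref{coupper} together with the comparisons among the three families of topologies. The plan is to reduce everything to the compact-open $C^2$-case already handled.

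First I would record the general principle that upper semicontinuity is preserved when one passes to a finer topology. Indeed, if $f\colon X\to\R\cup\{-\infty\}$ is upper semicontinuous with respect to a topology $\tau$ and $\tau'$ is a topology on $X$ finer than $\tau$, then every $\tau$-neighborhood of a point $x$ is also a $\tau'$-neighborhood of $x$, so the defining $\eps$- resp.\ $c$-condition from \ref{semicont} transfers verbatim.

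Next I would invoke the topological comparisons collected in Facts \ref{topologyfacts} and \ref{uniformfacts}. For every $k\in\N_{\geq2}\cup\{\infty\}$, the compact-open $C^k$-topology on $\Metr(M)$ is finer than the compact-open $C^2$-topology (by \ref{topologyfacts}); the uniform $C^k$-topology is finer than the compact-open $C^k$-topology (by \ref{uniformfacts}), hence finer than the compact-open $C^2$-topology; and the fine $C^k$-topology is finer than the uniform $C^k$-topology (by \ref{uniformfacts}), hence also finer than the compact-open $C^2$-topology. By Lemma \ref{coupper}, $\Yam_M$ is upper semicontinuous with respect to the compact-open $C^2$-topology, so the preceding principle yields upper semicontinuity with respect to each of the three topologies of the statement.

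Finally, for the continuity assertion at a metric $g$ with $\Yam_M(g)=-\infty$, I would simply observe that the lower semicontinuity condition from \ref{semicont} is vacuous at such a point: it demands only that for every $c\in\R$ some neighborhood $U$ of $g$ satisfies $\Yam_M(h)\geq-\infty$ for all $h\in U$, which is automatic. Hence upper semicontinuity already forces continuity at $g$, for each of the three topologies. There is no real obstacle; the content is entirely in Lemma \ref{coupper} and in the ordering of the topologies recorded earlier.
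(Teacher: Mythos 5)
Your proposal is correct and is exactly the paper's argument: the paper's entire proof is the one-line observation that each of the three topologies is finer than the compact-open $C^2$-topology, so everything reduces to Lemma \ref{coupper}. You merely spell out the comparisons from \ref{topologyfacts} and \ref{uniformfacts} and the (correct) vacuousness of lower semicontinuity at $-\infty$, which the paper leaves implicit.
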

\begin{proof}
Each of the considered topologies is finer than the compact-open $C^2$-topology.
\end{proof}

\section{Proof of Theorem \ref{gtheorem}} \label{gtheoremproof}

\begin{proof}[Proof of Theorem \ref{gtheorem}(\ref{gtheoremineq})]
By Fact \ref{increasing}, $\Yam_M(g)\leq \Yam_{M\without K}(g)$ holds for all compact subsets $K$ of $M$. Thus $\Yam_M(g) \leq \Yaminfty_M(g)$. In order to prove $-\norm{(\scal_g)_-}_{L^{n/2}(g)} \leq \Yam_M(g)$, we apply the H\"older inequality to each $v\in C_c^\infty(M,\R_{\geq0})\without\set{0}$ (using $\norm{v}_{L^p}^2 = \norm{v^2}_{L^{p/2}}$ and $\frac{2}{p} +\frac{2}{n} = 1$ for $p = \frac{2n}{n-2}$) and take the infimum over $v$ afterwards:
\[
E_g(v)
= \frac{\int_M\big(a_n\abs{\diff v}_g^2 +\scal_g\,v^2\big)\diff\mu_g}{\norm{v}_{L^p(g)}^2}
\geq  -\frac{\int_M (\scal_g)_-\,v^2\,\diff\mu_g}{\norm{v}_{L^p(g)}^2}
\geq -\bignorm{(\scal_g)_-}_{L^{n/2}(g)} . \qedhere
\]
\end{proof}

\begin{remarks}
In the estimate $\Yam_M(g)\leq \Yaminfty_M(g)$, equality is possible. Clearly we have $\Yam_M(g) = \Yaminfty_M(g)$ if $\Yam_M(g)=\sigma(S^n)$. If $\Yam_M(g)=-\infty$, then Theorem \ref{gtheorem}\eqref{gtheoreminftyb} will give equality. Moreover, if $(M,g)$ is almost homogeneous in the sense that there exists a bounded subset $U$ of $M$ such that for each $x\in M$ there is an isometry of $M$ with $f(x)\in U$, then $\Yam_M(g)=\Yaminfty_M(g)$: see \cite[Remark 14]{Grosse09}.

\smallskip
Equality in $-\norm{(\scal_g)_-}_{L^{n/2}(g)} \leq \Yam_M(g)$ can also occur. For instance, if $M$ is closed and $\scal_g$ is a nonpositive constant, then we have equality. For closed manifolds, $\scal_g$ being a nonpositive constant is the only possibility to get equality (this is easy to deduce from the Aubin--Schoen theorem \cite{LeeParker} which implies that the infimum in the definition of $\Yam_M(g)$ is achieved at some $v$). On noncompact manifolds equality holds also e.g.\ if $\Yam_M(g)=-\infty$.
\end{remarks}

While $\norm{(\scal_g)_-}_{L^{n/2}(g)} < \infty$ implies $\Yam_M(g) > -\infty$, the converse is in general not true: for instance, the $n$-dimensional hyperbolic space has Yamabe constant $\sigma(S^n)$, but satisfies $\norm{(\scal_g)_-}_{L^{n/2}(g)}=\infty$ because of its infinite volume and constant negative scalar curvature. That the two conditions are not equivalent should not be surprising: $\Yam_M(g)$ is a conformal invariant of $g$, but the $L^{n/2}(g)$-norm of $(\scal_g)_-$ is only invariant under rescalings of $g$ by constants. We expect that this is the only reason for the failure of equivalence:

\begin{conjecture}
Let $M$ be a nonempty manifold of dimension $n\geq3$, let $g\in\Metr(M)$. Then $\Yam_M(g)=-\infty$ holds if and only if $\bignorm{(\scal_{\bar{g}})_-}_{L^{n/2}(\bar{g})} = \infty$ holds for all metrics $\bar{g}$ in the conformal class of $g$.
\end{conjecture}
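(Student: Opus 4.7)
The plan is to prove the two implications separately and very asymmetrically, since ``only if'' is essentially immediate whereas ``if'' is the genuine content of the conjecture. For the ``only if'' direction, suppose $\Yam_M(g) = -\infty$. By conformal invariance of the Yamabe constant, every metric $\bar{g}$ in the conformal class of $g$ satisfies $\Yam_M(\bar{g}) = -\infty$, and applying Theorem~\ref{gtheorem}(\ref{gtheoremineq}) to $\bar{g}$ yields $-\bignorm{(\scal_{\bar{g}})_-}_{L^{n/2}(\bar{g})} \leq \Yam_M(\bar{g}) = -\infty$, hence $\bignorm{(\scal_{\bar{g}})_-}_{L^{n/2}(\bar{g})} = \infty$ for every such $\bar{g}$.

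For the ``if'' direction, assume $\Yam_M(g) > -\infty$ and try to produce $\bar{g} = u^{4/(n-2)} g$ with $u \in C^\infty(M,\R_{>0})$ and $\bignorm{(\scal_{\bar{g}})_-}_{L^{n/2}(\bar{g})} < \infty$. Using $\scal_{\bar{g}} = u^{-(n+2)/(n-2)}(a_n \laplace_g u + \scal_g u)$ and $\diff\mu_{\bar{g}} = u^{2n/(n-2)}\,\diff\mu_g$, a short exponent calculation (the relevant power is $-\frac{n(n+2)}{2(n-2)} + \frac{2n}{n-2} = -\frac{n}{2}$) reduces the desired estimate to
\[
\int_M u^{-n/2}\bigl((L_g u)_-\bigr)^{n/2}\,\diff\mu_g < \infty ,
\]
where $L_g \define a_n\laplace_g + \scal_g$ is the conformal Laplacian of $g$. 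A natural way to achieve this is to construct $u$ with $L_g u \geq -c\,u^{(n+2)/(n-2)}$ pointwise for some $c>0$ and with $\vol(M,\bar{g}) < \infty$, since these give $\scal_{\bar{g}} \geq -c$ and hence $\bignorm{(\scal_{\bar{g}})_-}_{L^{n/2}(\bar{g})}^{n/2} \leq c^{n/2}\vol(M,\bar{g}) < \infty$. Concretely, I would exhaust $M$ by precompact domains $\Omega_i$, solve a Dirichlet problem for $L_g u_i = -c\,u_i^{(n+2)/(n-2)}$ on each $\Omega_i$ with boundary values diverging to $+\infty$, and extract a monotone limit $u$. The coercivity
\[
\int_M\bigl(a_n\abs{\diff v}_g^2 + \scal_g v^2\bigr)\,\diff\mu_g \;\geq\; \Yam_M(g)\,\bignorm{v}_{L^{2n/(n-2)}(g)}^2
\]
for $v \in C_c^\infty(M,\R_{\geq0})$, which is precisely the hypothesis $\Yam_M(g) > -\infty$ unwound, should provide the a priori bound needed to keep the sequence $u_i$ positive and nondegenerate on every compact set.

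The main obstacle, and the reason this remains only a conjecture, is controlling $u$ near infinity without any global geometric hypothesis on $g$. The infinite-boundary-data Dirichlet construction produces a useful positive limit only in the presence of an appropriate maximum principle or nonparabolicity at infinity for $L_g$, and the bound $\Yam_M(g) > -\infty$ alone does not obviously furnish such a principle: the Yamabe functional sees only compactly supported test functions, whereas the desired conclusion requires a \emph{global} conformal factor $u$ on $M$. In particular, the limit $u$ could in principle degenerate to $0$ on some open subset, leaving $\bar{g}$ undefined even though $\Yam_M(g) > -\infty$. I expect that adapting the techniques of Akutagawa and Gro{\ss}e cited in the introduction settles the conjecture under bounded-geometry assumptions, but the general case seems to require a new idea bridging the local variational characterisation of $\Yam_M$ and the global behaviour of conformal factors on noncompact manifolds.
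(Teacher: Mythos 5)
This statement is not proved in the paper: it is stated explicitly as a \emph{conjecture}, and the authors offer no argument for it beyond the observation (immediately preceding it) that finiteness of $\bignorm{(\scal_g)_-}_{L^{n/2}(g)}$ implies $\Yam_M(g)>-\infty$ while the converse fails for hyperbolic space. So there is no ``paper proof'' to compare against, and the only question is whether your proposal closes the conjecture. It does not, and you say so yourself. Your ``only if'' direction is correct and complete: it is exactly the contrapositive of the easy implication the authors record, obtained by combining conformal invariance of $\Yam_M$ with the H\"older estimate $-\bignorm{(\scal_{\bar{g}})_-}_{L^{n/2}(\bar{g})}\leq\Yam_M(\bar{g})$ from Theorem~\ref{gtheorem}(\ref{gtheoremineq}) (whose proof is a pointwise H\"older computation and so applies on compact manifolds as well, even though the theorem is stated for noncompact ones). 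Your exponent reduction for the ``if'' direction is also correct: with $\bar{g}=u^{4/(n-2)}g$ one indeed gets $\bignorm{(\scal_{\bar{g}})_-}_{L^{n/2}(\bar{g})}^{n/2}=\int_M u^{-n/2}\bigl((L_gu)_-\bigr)^{n/2}\diff\mu_g$.

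The genuine gap is the one you name: producing a globally defined, everywhere positive $u$ with the required integrability from the sole hypothesis $\Yam_M(g)>-\infty$. The exhaustion-and-Dirichlet scheme you sketch has two unresolved failure modes. First, the coercivity inequality you quote controls only the quadratic form on compactly supported functions; it gives no Harnack-type lower bound preventing the approximating solutions $u_i$ from collapsing to $0$ on compact sets as the domains grow, nor any upper control that would make $\vol(M,\bar{g})$ finite in the limit. Second, even granting a positive limit $u$, the pointwise inequality $L_gu\geq -c\,u^{(n+2)/(n-2)}$ must survive the limiting process, which typically requires a comparison principle for $L_g$ that is unavailable without geometric hypotheses at infinity. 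Since the authors themselves present the statement as open precisely because no such bridge between the variational finiteness of $\Yam_M(g)$ and the global behaviour of conformal factors is known, your proposal should be read as a correct proof of one (trivial) implication together with a reasonable but incomplete strategy for the other; it does not establish the conjecture.
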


\noindent For instance, hyperbolic space is conformal to a subset of Euclidean space with $\norm{(\scal_g)_-}_{L^{n/2}(g)}=0$.

\begin{proof}[Proof of Theorem \ref{gtheorem}\eqref{gtheoreminftya}]
Let $\Yaminfty_M(g)<0$. Assume that $-\infty<\Yaminfty_M(g)$. Let $p=p_n$. We choose a compact exhaustion $(K'_i)_{i\geq1}$ of $M$ and define $K_0\define\leer$. We will construct recursively a compact exhaustion $(K_i)_{i\geq1}$ of $M$ and a sequence $(v_i)_{i\geq1}$ in $C_c^\infty(M,\R_{\geq0})\without\set{0}$ such that the properties
\begin{align*}
\supp(v_i) &\subseteq K_i\without K_{i-1} ,
&\Yamfunc_g(v_i) &\leq \Yaminfty_M(g) +\tfrac{1}{2^i} ,
&\norm{v_i}_{L^p(g)} &= 1
\end{align*}
hold for all $i\geq1$.

\smallskip
When $K_j$ and $v_j$ have already been constructed with these properties for all $j\in\N$ with $1\leq j<i$, we find $v_i$ as follows. Since $M\without K_{i-1}$ contains $M\without K'_j$ for all sufficiently large $j$, Fact \ref{increasing} yields $\Yam_{M\without K_{i-1}}(g) \leq \Yam_{M\without K'_j}(g)$ for all sufficiently large $j$. This implies $\Yam_{M\without K_{i-1}}(g) \leq \Yaminfty_M(g)$. Thus there exists a function $\tilde{v}_i\in C^\infty_c(M\without K_{i-1},\R_{\geq0})$ with $\Yamfunc_g(\tilde{v}_i) \leq \Yaminfty_M(g) +\tfrac{1}{2^i}$ and $\norm{\tilde{v}_i}_{L^p(g)} = 1$. We let $v_i\in C^\infty(M,\R_{\geq0})$ be the extension of $\tilde{v}_i$ with $\supp(v_i)=\supp(\tilde{v}_i)$ and define $K_i\define K'_{m(i)}$, where $m(0)\define0$ and $m(i)\define \min\set{j\in\N \suchthat j\geq i, \,j>m(i-1), \,\supp(v_i)\subseteq K'_j\without\mfbd K'_j}$. This completes the recursive definition of $(K_i)_{i\geq1}$ and $(v_i)_{i\geq1}$.

\smallskip
For each $i\geq1$, the properties $\supp(v_i)\subseteq K_i\without K_{i-1}$ and $\Yamfunc_g(v_i) \leq \Yaminfty_M(g) +\tfrac{1}{2^i}$ and $\norm{v_i}_{L^p(g)} = 1$ hold by construction. The sets $K_i$ form a compact exhaustion of $M$ because $(K'_i)_{i\geq1}$ is a compact exhaustion of $M$ (each $x\in M$ lies in some $K'_j$ and thus in $K_j$, and each $K_i$ lies in the interior of $K_{i+1}$ because $K'_{m(i)}$ lies in the interior of $K'_{m(i+1)}$). Thus $(K_i)_{i\geq1}$ and $(v_i)_{i\geq1}$ have the claimed properties.

\smallskip
For $j,k\in\N$ with $0\leq k<j$, we consider $w_{j,k}\define \sum_{i=k+1}^j v_i\restrict_{M\without K_k}\in C_c^\infty(M\without K_k,\R_{\geq0})$. Using that the supports of the functions $v_i$ are pairwise disjoint, we compute:
\[ \begin{split}
\Yam_{M\without {K}_k}(g) &\leq \Yamfunc_g(w_{j,k})
= \frac{\displaystyle\int_M \bigg(a_n\Bigabs{\textstyle\sum_{i=k+1}^j\diff v_i}_g^2 +\scal_g\Big({\textstyle\sum_{i=k+1}^j v_i}\Big)^2 \bigg)\,\diff\mu_g}{\displaystyle\bigg(\int_M \Big({\textstyle\sum_{i=k+1}^j v_i}\Big)^p\,\diff\mu_g\bigg)^{2/p}}\\
&=\frac{\displaystyle\sum_{i=k+1}^j \int_M\bigg(a_n\abs{\diff v_i}^2 +\scal_gv_i^2\bigg)\diff\mu_g }{\displaystyle\bigg(\sum_{i=k+1}^j\norm{v_i}_{L^p(g)}^p\bigg)^{{2/p}}}
= (j-k)^{-2/p}\sum_{i=k+1}^j\Yamfunc_g(v_i)\\
&\leq (j-k)^{-2/p}\left((j-k)\,\Yaminfty_M(g) +\sum_{i=k+1}^j\frac{1}{2^i}\right)\\
&\leq (j-k)^{2/n}\,\Yaminfty_M(g) +2 .
\end{split} \]
Since $\Yaminfty_M(g)<0$, this tends to $-\infty$ as $j\to\infty$. Thus we obtain $\Yam_{M\without K_k}(g) = -\infty$ for each $k$, in particular $\Yaminfty_M(g)=-\infty$, in contradiction to our assumption. Hence $\Yaminfty_M(g)=-\infty$.
\end{proof}

\begin{proof}[Proof of Theorem \ref{gtheorem}\eqref{gtheoreminftyb}]
Let $\Yam_M(g)=-\infty$. We argue by contradiction and assume $\Yaminfty_M(g)>-\infty$. Then there exists a compact subset $K_0\subset M$ with $\Yam_{M\without K_0}(g)>-\infty$. We choose a compact subset $K_1$ of $M$ whose interior contains $K_0$, and a smooth cutoff function $\eta\in C^\infty(M,[0,1])$ which is $1$ on a neighborhood of $K_0$ and vanishes on a neighborhood of the closure of $M\without K_1$. Theorem \ref{gtheorem}(\ref{gtheoremineq}) implies $\Yam_{K_1\without\mfbd K_1}(g) \geq -\norm{(\scal_g)_-}_{L^{n/2}(K_1;g)} > -\infty$. Let $p=p_n$, let $v\in C_c^\infty(M,\R_{\geq0})$ with $\int_M v^p\,\diff\mu_g=1$.

\smallskip
Since $\eta v\in C^\infty_c(K_1\without\mfbd K_1,\R_{\geq0})$ and $(1-\eta)v\in C^\infty_c(M\without K_0,\R_{\geq0})$, we obtain:
\[ \begin{split}
\Yamfunc_g(v) &= \int_M \Big( a_n\bigabs{\diff\big(\eta v+(1-\eta)v\big)}_g^2 +\scal_g\big(\eta v+(1-\eta)v\big)^2 \Big)\,\diff\mu_g\\
&= \int_M \Big( a_n\bigabs{\diff\big(\eta v\big)}_g^2 +\scal_g\big(\eta v\big)^2 \Big)\,\diff\mu_g
+\int_M \Big( a_n\bigabs{\diff\big((1-\eta)v\big)}_g^2 +\scal_g\big((1-\eta)v\big)^2 \Big)\,\diff\mu_g\\
&\mspace{20mu}+2\int_M \Big( a_n\bigeval{\diff\big(\eta v\big)}{\diff\big((1-\eta)v\big)}_g +\scal_g\,\eta\,(1-\eta)\,v^2 \Big)\,\diff\mu_g\\
&\geq \Yam_{K_1\without\mfbd K_1}(g)\left( \int_M \eta^p v^p\diff\mu_g\right)^{2/p}+\Yam_{M\without K_0}(g)\left( \int_M (1-\eta)^p v^p\diff\mu_g\right)^{2/p}\\
&\mspace{20mu}+2\int_M \Big( a_n\bigeval{\eta\,\diff v +v\,\diff\eta}{(1-\eta)\,\diff v -v\,\diff\eta}_g
+\scal_g\,\eta\,(1-\eta)\,v^2 \Big)\,\diff\mu_g\\[1ex]
&\geq \min\bigset{\Yam_{K_1\without\mfbd K_1}(g),\,0} +\min\bigset{\Yam_{M\without K_0}(g),\,0} -2\bignorm{\scal_g\,\eta\,(1-\eta)}_{L^{n/2}(g)}\\
&\mspace{20mu}
+2a_n\int_M \eta(1-\eta)\abs{\diff v}_g^2\,\diff\mu_g
+a_n\int_M \bigeval{2v\,\diff v}{(1-2\eta)\,\diff\eta}_g\,\diff\mu_g
-2a_n\int_M v^2\abs{\diff\eta}_g^2\,\diff\mu_g\\[1ex]
&\geq \min\bigset{\Yam_{K_1\without\mfbd K_1}(g),\,0} +\min\bigset{\Yam_{M\without K_0}(g),\,0} -2\bignorm{\scal_g}_{L^{n/2}(K_1\without K_0;g)}\\
&\mspace{20mu} +0
-a_n\int_M v^2\,\divergence_g\big((1-2\eta)\diff\eta\big)\,\diff\mu_g
-2a_n\bignorm{\abs{\diff\eta}_g^2}_{L^{n/2}(g)}\\[1ex]
&\geq \min\bigset{\Yam_{K_1\without\mfbd K_1}(g),\,0} +\min\bigset{\Yam_{M\without K_0}(g),\,0} -2\bignorm{\scal_g}_{L^{n/2}(K_1\without K_0;g)}\\
&\mspace{20mu}
-a_n\bignorm{\divergence_g\big((1-2\eta)\diff\eta\big)}_{L^{n/2}(g)}
-2a_n\bignorm{\abs{\diff\eta}_g^2}_{L^{n/2}(g)} .
\end{split} \]
This is a finite number independent of $v$. Hence $\Yam_M(g)>-\infty$, a contradiction.
\end{proof}

\section{Preparations for the fine continuity proofs}

\begin{lemma} \label{deltalemmafine}
Let $n\in\N$, let $(K_i)_{i\geq0}$ be a compact exhaustion of a Riemannian $n$-manifold $(M,g)$, let $(\eps_i)_{i\geq0}$ be a sequence of positive real numbers. Then there exists a function $\delta\in C^\infty(M,\R_{>0})$ which satisfies for every $i\geq0$ the inequalities $\delta\restrict_{M\without K_i}\leq\eps_i$ and
\begin{align*}
\bignorm{\delta}_{L^{n/2}(M\without K_i,g)} &\leq\eps_i,
&\bignorm{\diff \delta}_{L^{n}(M\without K_i,g)} &\leq\eps_i,\\
\bignorm{\delta\,\scal_g}_{L^{n/2}(M\without K_i,g)} &\leq\eps_i,
&\bignorm{\laplace_g\delta}_{L^{n/2}(M\without K_i,g)} &\leq\eps_i.
\end{align*}
\end{lemma}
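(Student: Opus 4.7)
The plan is to build $\delta$ as a weighted sum over an annular partition of unity with coefficients that decay rapidly enough to overwhelm all the relevant geometric quantities of $g$ on each annulus. Concretely, after possibly enlarging the $K_i$, I would pick open sets $U_i$ with $K_{i}\subset U_i\subset\bar U_i\subset\mt{int}(K_{i+1})$ for each $i\geq 1$, and choose a smooth partition of unity $(\eta_i)_{i\geq 0}$ with $\supp(\eta_0)\subseteq U_1$ and $\supp(\eta_i)\subseteq K_{i+1}\setminus K_{i-1}$ for $i\geq 1$, so that the supports form a locally finite cover in which each point of the annulus $A_j\define K_{j+1}\setminus\mt{int}(K_j)$ is hit by at most the three consecutive cutoffs $\eta_{j-1},\eta_j,\eta_{j+1}$. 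I then set
\[
\delta \define \sum_{j\geq 0} c_j\,\eta_j
\]
with positive constants $c_j$ to be chosen, and observe that $\delta$ is smooth and strictly positive (since $\sum_j\eta_j\equiv 1$ and the $c_j$ are strictly positive, $\delta$ is at each point a convex combination with positive weights of finitely many strictly positive values).

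Next I would estimate each of the four quantities annulus by annulus. Since each $A_j$ is compact, the numbers
\[
V_j\define\vol(A_j,g),\quad S_j\define\int_{A_j}\abs{\scal_g}^{n/2}\diff\mu_g,\quad D_j\define\max_{|k-j|\leq 1}\norm{\diff\eta_k}_{L^\infty(A_j,g)},\quad L_j\define\max_{|k-j|\leq 1}\norm{\laplace_g\eta_k}_{L^\infty(A_j,g)}
\]
are all finite. On $A_j$ one has $\delta\leq \max_{|k-j|\leq1}c_k$, $\abs{\diff\delta}_g\leq D_j\sum_{|k-j|\leq1}c_k$, and $\abs{\laplace_g\delta}\leq L_j\sum_{|k-j|\leq1}c_k$. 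Therefore each of $\int_{A_j}\delta^{n/2}\diff\mu_g$, $\int_{A_j}\delta^{n/2}\abs{\scal_g}^{n/2}\diff\mu_g$, $\int_{A_j}\abs{\diff\delta}_g^n\diff\mu_g$, and $\int_{A_j}\abs{\laplace_g\delta}^{n/2}\diff\mu_g$ is bounded by some explicit $C_j$ depending only on $\max_{|k-j|\leq1}c_k$ together with $V_j,S_j,D_j,L_j$, and tending to $0$ as $\max_{|k-j|\leq1}c_k\to 0$.

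Finally, I would fix the $c_j$ recursively so they decrease fast enough that three conditions hold simultaneously for every $i$: first, $c_j\leq\eps_i$ whenever $A_j\cap(M\without K_i)\neq\leer$, which forces the pointwise bound $\delta\restrict_{M\without K_i}\leq\eps_i$; second, the tail sums $\sum_{j\geq i-1}C_j$ for each of the four functionals above are bounded by the relevant power of $\eps_i$, which follows if $C_j$ is made $\leq 2^{-j}\min(\eps_0^{n/2},\dots,\eps_j^{n/2},\eps_0^n,\dots,\eps_j^n)$; third, the sum defining $\delta$ is locally finite, which is automatic from the supports. Since each $C_j$ can be made arbitrarily small by taking $c_{j-1},c_j,c_{j+1}$ small, such a recursive choice clearly exists. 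The only mildly delicate step is bookkeeping, since the three-term overlap on each $A_j$ means that $c_j$ must be chosen in light of both $C_{j-1}$ and $C_j$ and $C_{j+1}$, but because each $c_k$ enters the bounds polynomially, simply shrinking each $c_j$ after it is first chosen only helps, so no circular constraint arises.
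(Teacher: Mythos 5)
Your proposal is correct and follows essentially the same route as the paper's proof: decompose $M$ into the compact annuli between consecutive $K_j$, make $\delta$ essentially equal to a constant $c_j$ on the $j$-th annulus via fixed cutoff functions, and let the $c_j$ decay geometrically fast against the (finite, by compactness) volume, scalar-curvature and cutoff-derivative data of each annulus so that the four tail integrals over $M\without K_i$ telescope below $\eps_i$. The paper writes the interpolation as $\delta\restrict_{K'_j}=(\delta_j-\delta_{j+1})\beta_j+\delta_{j+1}$ rather than as a partition-of-unity sum, which is the same construction; the only quibble is that $\sum_{j\geq i-1}2^{-j}=2^{2-i}$ can exceed $1$, so your factor $2^{-j}$ should be, say, $2^{-j-2}$ to land exactly under $\eps_i$.
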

\begin{proof}
We define $K'_{-1}\define\varnothing$ and $K'_i\define K_i\without(K_{i-1}\without\mfbd K_{i-1})$ for $i\geq0$. For each $i\geq0$, we choose a function $\beta_i\in C^\infty(K'_i,[0,1])$ which is constant $1$ near $K'_{i-1}\cap K'_i$ and is constant $0$ near $K'_i\cap K'_{i+1}$. We define recursively $\eps'_{-1}\define1$ and $\eps'_i\define\min\bigset{\tfrac{1}{2}\eps'_{i-1}, \eps_i} \in\R_{>0}$ for $i\geq0$. For all $j\geq i\geq0$, this implies $\eps'_j\leq 2^{-(j-i)}\eps_i$. Thus
\[
\forall i\geq0: \;\;\sum_{j>i}\eps'_j \leq \sum_{j>i}2^{-(j-i)}\eps_i = \eps_i .
\]
We let $\delta_{-1}\define1$ and, for all $i\geq0$,
\[
\delta_i \define \min\Biggset{\delta_{i-1},\, \eps_i,\,
\frac{\eps'_i}{\norm{1}_{L^{n/2}(K'_i,g)} +\norm{\scal_g}_{L^{n/2}(K'_i,g)} +\norm{\diff\beta_i}_{L^n(K'_i,g)} +\norm{\laplace_g\beta_i}_{L^{n/2}(K'_i,g)} } } >0 .
\]
The function $\delta\in C^\infty(M,\R)$ given by $\delta\restrict_{K'_i} = (\delta_i-\delta_{i+1})\beta_i +\delta_{i+1}$ is positive because $(\delta_i)_{i\geq0}$ is a monotonically decreasing sequence of positive numbers. It satisfies $\delta\restrict_{M\without K_i}\leq\eps_i$ for every $i\geq0$, because $(\delta_i)_{i\geq0}$ is monotonically decreasing with $\delta\restrict_{K'_i} \leq \delta_i \leq \eps_i$. Since $M\without K_i\subseteq \bigcup_{j>i}K'_j$ holds for every $i\geq0$, we obtain for $i\geq0$:
\begin{align*}
\bignorm{\delta}_{L^{n/2}(M\without K_i,g)} &\leq \sum_{j>i}\bignorm{\delta}_{L^{n/2}(K'_j,g)}
\leq \sum_{j>i}\delta_j\bignorm{1}_{L^{n/2}(K'_j,g)} \leq \sum_{j>i}\eps'_j \leq \eps_i ,\\
\bignorm{\delta\,\scal_g}_{L^{n/2}(M\without K_i,g)} &\leq \sum_{j>i}\bignorm{\delta\,\scal_g}_{L^{n/2}(K'_j,g)}
\leq \sum_{j>i}\delta_j\bignorm{\scal_g}_{L^{n/2}(K'_j,g)} \leq \sum_{j>i}\eps'_j \leq \eps_i ,\\
\bignorm{\diff\delta}_{L^{n}(M\without K_i,g)} &\leq \sum_{j>i}\bignorm{\diff\delta}_{L^{n}(K'_j,g)}
\leq \sum_{j>i}\delta_j\bignorm{\diff\beta_j}_{L^{n}(K'_j,g)} \leq \sum_{j>i}\eps'_j \leq \eps_i ,\\
\bignorm{\laplace_g\delta}_{L^{n/2}(M\without K_i,g)} &\leq \sum_{j>i}\bignorm{\laplace_g\delta}_{L^{n/2}(K'_j,g)}
\leq \sum_{j>i}\delta_j\bignorm{\laplace_g\beta_j}_{L^{n/2}(K'_j,g)} \leq \sum_{j>i}\eps'_j \leq \eps_i .\qedhere
\end{align*}
\end{proof}

\begin{lemma} \label{topologylemmastrong}
Let $n\in\N$, let $(K_i)_{i\in\N}$ be a compact exhaustion of a Riemannian $n$-manifold $(M,g)$, let $(\eps_i)_{i\in\N}$ be a sequence of positive real numbers. Then there exist a fine $C^2$-neighborhood $\mathcal{U}$ of $g$ and a function $\delta\in C^\infty(M,\R_{>0})$ such that the following conditions hold for all $h\in\mathcal{U}$:
\begin{enumerate}
\item $\forall i\in\N\colon$ \;\;$\delta\restrict_{M\without K_i}\leq\eps_i$ \;\;and\;\; $\norm{\delta}_{L^{n/2}(M\without K_i,g)} \leq\eps_i$ \;\;and\;\; $\norm{\delta\,\scal_g}_{L^{n/2}(M\without K_i,g)} \leq\eps_i$.
\item $\forall x\in M\colon \forall \alpha\in T^\ast_xM\colon \bigabs{\abs{\alpha}_h^2 -\abs{\alpha}_g^2} \leq \delta(x)\abs{\alpha}_g^2$.
\item $\abs{\scal_g-\scal_h} \leq \delta$.
\item $\Bigabs{1-\F{h}{g}}\leq\delta$.
\item $\forall i\in\N\colon \Bignorm{\diff\Big(\big((1-\delta)\F{h}{g}\big)^{1/2}\Big)}_{L^n(M\without K_i,g)}^2 \leq \eps_i \;\;\text{\;\;and\;\;} \;\;\Bignorm{\laplace_g\Big((1-\delta)\F{h}{g}\Big)}_{L^{n/2}(M\without K_i,g)} \leq \eps_i$.
\end{enumerate}
\end{lemma}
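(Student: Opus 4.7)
The plan is to construct $\delta$ first via Lemma \ref{deltalemmafine}, then build $\mathcal{U}$ as the intersection of five fine $C^2$-open neighborhoods of $g$ of the type produced by Examples \ref{fineexamples}. The anticipated main obstacle is condition (5), which involves derivatives of the product $(1-\delta)\F{h}{g}$ and therefore mixes the smallness of $\delta,\diff\delta,\laplace_g\delta$ (controlled by Lemma \ref{deltalemmafine}) with the smallness of $\diff\F{h}{g}$ and $\laplace_g\F{h}{g}$ (controlled by membership in $\mathcal{U}$); the two families of tolerances need to be coordinated, but no genuinely new idea is required.

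Concretely, I would apply Lemma \ref{deltalemmafine} to a sequence $(\tilde\eps_i)$ chosen so small (say $\tilde\eps_i \leq \eps_i/C$ for a universal constant $C$ fixed at the end) that every target bound in (1) and (5) will come out $\leq\eps_i$; after possibly shrinking $\tilde\eps_0$ we may also force $\delta \leq \tfrac{1}{2}$ everywhere, a bound needed below. This produces a $\delta$ satisfying (1) immediately. Next, pick auxiliary functions $\eta_1,\eta_2 \in C^\infty(M,\R_{>0})$ with $\norm{\eta_1}_{L^n(M\without K_i,g)} \leq \tilde\eps_i$ and $\norm{\eta_2}_{L^{n/2}(M\without K_i,g)} \leq \tilde\eps_i$ for every $i$ (obtained by the same exhaustion-and-partition-of-unity construction as in Lemma \ref{deltalemmafine}). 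Then set
\[
\mathcal{U} \define \Bigset{h\in\Metr(M) \Bigsuchthat \text{(2),(3),(4) hold and } \bigabs{\diff\F{h}{g}}_g<\eta_1,\ \bigabs{\laplace_g\F{h}{g}}<\eta_2}.
\]
By Examples \ref{fineexamples}(1)--(5) each defining inequality cuts out a fine $C^2$-open neighborhood of $g$ (noting that $\F{g}{g}\equiv1$, so $\diff\F{g}{g}=0$ and $\laplace_g\F{g}{g}=0$), whence $\mathcal{U}$ itself is fine $C^2$-open, and (2),(3),(4) hold for all $h\in\mathcal{U}$ by construction.

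For (5), set $f \define \F{h}{g}$ and $u \define (1-\delta)f$. From $\bigabs{1-f}\leq\delta\leq\tfrac{1}{2}$ we obtain $u\in[\tfrac{1}{4},1]$, so $u^{1/2}$ is smooth with $u^{1/2}\geq\tfrac{1}{2}$. Since $\diff u = (1-\delta)\diff f - f\,\diff\delta$ and $\diff u^{1/2} = (2u^{1/2})^{-1}\diff u$, we get $\abs{\diff u^{1/2}}_g \leq 2(\abs{\diff f}_g + \abs{\diff\delta}_g)$, hence
\[
\bignorm{\diff u^{1/2}}_{L^n(M\without K_i,g)}^2 \leq 8\bignorm{\diff\delta}_{L^n(M\without K_i,g)}^2 + 8\bignorm{\eta_1}_{L^n(M\without K_i,g)}^2 \leq 16\,\tilde\eps_i^2 .
\]
The product rule $\laplace_g(ab) = a\laplace_g b + b\laplace_g a - 2\eval{\diff a}{\diff b}_g$ applied to $a=1-\delta$, $b=f$, combined with Cauchy--Schwarz pointwise and H\"older (using $\tfrac{1}{n}+\tfrac{1}{n}=\tfrac{2}{n}$), yields
\[
\bignorm{\laplace_g u}_{L^{n/2}(M\without K_i,g)} \leq \bignorm{\eta_2}_{L^{n/2}} + 2\bignorm{\laplace_g\delta}_{L^{n/2}} + 2\bignorm{\diff\delta}_{L^n}\bignorm{\eta_1}_{L^n} \leq 5\,\tilde\eps_i ,
\]
all norms being taken over $M\without K_i$ with respect to $g$. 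Choosing the universal constant $C$ large enough to absorb the prefactors $16$ and $5$ yields both halves of (5), completing the argument.
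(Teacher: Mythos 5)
Your proof is correct and follows essentially the same route as the paper's: apply Lemma \ref{deltalemmafine} with shrunken tolerances $\tilde\eps_i$, cut out $\mathcal{U}$ via Examples \ref{fineexamples}, and verify (5) by the product and chain rules using $(1-\delta)\F{h}{g}\geq\tfrac{1}{4}$; the only cosmetic difference is that you introduce separate majorants $\eta_1,\eta_2$ for $\bigabs{\diff\big(\F{h}{g}\big)}_g$ and $\bigabs{\laplace_g\big(\F{h}{g}\big)}$, whereas the paper reuses $\delta$ itself together with $\norm{\delta}_{L^n}\leq\norm{\delta}_{L^{n/2}}^{1/2}$ for $\delta\leq1$. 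One small correction: since your final requirements are $16\tilde\eps_i^2\leq\eps_i$ and $3\tilde\eps_i+2\tilde\eps_i^2\leq\eps_i$, the shrinking must be of the form $\tilde\eps_i\leq\min\set{\eps_i,\sqrt{\eps_i},1}/C$ rather than the purely linear $\tilde\eps_i\leq\eps_i/C$ (which fails for large $\eps_i$) --- exactly the kind of bookkeeping adjustment you anticipated.
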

\begin{proof}
For each $i\in\N$, we choose $\tilde{\eps}_i\in\R_{>0}$ so small that
\begin{align*}
\tilde{\eps}_i &\leq \tfrac{1}{2} ,
&\sqrt{\tilde{\eps}_i} +\tilde{\eps}_i &\leq \sqrt{\eps_i},
&3\tilde{\eps}_i +2\tilde{\eps}_i^{3/2} &\leq \eps_i .
\end{align*}
We apply Lemma \ref{deltalemmafine} to the sequence $(\tilde{\eps}_i)_{i\in\N}$ and obtain a function $\delta\in C^\infty(M,\R_{>0})$ with the properties stated in Lemma \ref{deltalemmafine}, but with $\tilde{\eps}_i$ instead of $\eps_i$. Then condition (1) holds, because $\forall i\in\N\colon \tilde{\eps}_i\leq\eps_i$. The Examples \ref{fineexamples} imply that $g$ has a fine $C^2$-neighborhood $\mathcal{U}$ such that every $h\in\mathcal{U}$ satisfies
\begin{enumerate} \renewcommand{\labelenumi}{(\alph{enumi})}
\item $\abs{\scal_h-\scal_g} < \delta$;
\item $\forall x\in M\colon \max\bigset{\bigabs{\abs{\alpha}_h^2-1} \bigsuchthat \alpha\in T_x^\ast M, \,\abs{\alpha}_g=1} < \delta(x)$;
\item $\bigabs{\F{h}{g}-1} < \delta$;
\item $\bigabs{\diff\big(\F{h}{g}\big)}_g < \delta$;
\item $\bigabs{\laplace_g\big(\F{h}{g}\big)} < \delta$.
\end{enumerate}
Property (b) yields condition (2): that's because $\bigabs{\abs{\alpha}_h^2 -\abs{\alpha}_g^2} \leq \delta(x)\abs{\alpha}_g^2$ holds for $\alpha=0$, and because for $\alpha\in T_x^\ast M\without\set{0}$, \,$\beta\define\alpha/\abs{\alpha}_g$ satisfies $\abs{\beta}_g=1$ and thus $\bigabs{\abs{\beta}_h^2-1} \leq \delta(x)$, which implies that $\alpha$ satisfies $\bigabs{\abs{\alpha}_h^2 -\abs{\alpha}_g^2} \leq \delta(x)\abs{\alpha}_g^2$. The properties (a) and (c) yield (3) and (4), respectively. It remains to verify (5). Using $\tfrac{1}{2} \leq 1-\tilde{\eps}_i \leq 1-\delta\restrict_{M\without K_i} \leq 1$ and (c) and (d), we obtain on $M\without K_i$:
\[ \begin{split}
\Bigabs{\diff\Big(\big((1-\delta)\F{h}{g}\big)^{1/2}\Big)}_g
&= \Biggabs{ \frac{(1-\delta)\diff\big(\F{h}{g}\big) -\F{h}{g}\,\diff\delta}{2\big((1-\delta)\F{h}{g}\big)^{1/2}} }_g\\
&\leq \Biggabs{ \frac{\diff\big(\F{h}{g}\big)}{\sqrt{2}\big(\F{h}{g}\big)^{1/2}} }_g
+\Biggabs{ \frac{\big(\F{h}{g}\big)^{1/2}\,\diff\delta}{\sqrt{2}} }_g\\
&\leq \frac{\delta}{\sqrt{2(1-\delta)}}
+\sqrt{\frac{1+\delta}{2}}\abs{\diff\delta}_g\\
&\leq \delta +\abs{\diff\delta}_g .
\end{split} \]
Hence, because of $\delta\leq1$ and the properties stated in Lemma \ref{deltalemmafine}:
\begin{multline*}
\forall i\in\N\colon
\Bignorm{\diff\Big(\big((1-\delta)\F{h}{g}\big)^{1/2}\Big)}_{L^n(M\without K_i,g)}
\leq \bigg(\int_{M\without K_i}\delta^n\,\diff\mu_g\bigg)^{1/n}
+\norm{\diff\delta}_{L^n(M\without K_i;g)}\\
\leq \bigg(\int_{M\without K_i}\delta^{n/2}\,\diff\mu_g\bigg)^{1/n} +\tilde{\eps}_i
= \norm{\delta}_{L^{n/2}(M\without K_i;g)}^{1/2} +\tilde{\eps}_i
\leq \sqrt{\tilde{\eps}_i} +\tilde{\eps}_i \leq \sqrt{\eps_i} .
\end{multline*}
Thus the first inequality in (5) holds. Similarly we get from (c), (d), (e):
\[ \begin{split}
\Bigabs{\laplace_g\Big((1-\delta)\F{h}{g}\Big)}
&= \Bigabs{ (1-\delta)\laplace_g\big(\F{h}{g}\big) -\F{h}{g}\laplace_g\delta -2\bigeval{\diff\delta}{\diff\big(\F{h}{g}\big)}_g }
\leq \delta
+2\bigabs{\laplace_g\delta}
+2\delta\,\abs{\diff\delta}_g ;
\end{split} \]
hence
\[ \begin{split}
\Bignorm{\laplace_g\Big((1-\delta)\F{h}{g}\Big)}_{L^{n/2}(M\without K_i,g)}
&\leq \bignorm{\delta}_{L^{n/2}(M\without K_i,g)}
+2\bignorm{\laplace_g\delta}_{L^{n/2}(M\without K_i,g)}\\
&\mspace{20mu}
+2\bignorm{\delta}_{L^n(M\without K_i,g)}\,\bignorm{\diff\delta}_{L^n(M\without K_i,g)}\\
&\leq \tilde{\eps}_i +2\tilde{\eps}_i +2\sqrt{\tilde{\eps}_i}\,\tilde{\eps}_i\\
&\leq \eps_i .
\end{split} \]
Thus also the second inequality in (5) holds.
\end{proof}

\begin{corollary} \label{topologylemma}
Let $n\in\N$, let $(M,g)$ be a Riemannian $n$-manifold, let $\eps\in\R_{>0}$. Then there exist a fine $C^2$-neighborhood $\mathcal{U}$ of $g$ and a function $\delta\in C^\infty(M,\R_{>0})$ such that the following conditions hold for all $h\in\mathcal{U}$:
\begin{enumerate}
\item $\delta\leq\eps$ \;\;and\;\; $\norm{\delta}_{L^{n/2}(g)} \leq\eps$ \;\;and\;\; $\norm{\delta\,\scal_g}_{L^{n/2}(M,g)} \leq\eps$.
\item $\forall x\in M\colon \forall \alpha\in T^\ast_xM\colon \bigabs{\abs{\alpha}_h^2 -\abs{\alpha}_g^2} \leq \delta(x)\abs{\alpha}_g^2$.
\item $\abs{\scal_g-\scal_h} \leq \delta$.
\item $\Bigabs{1-\F{h}{g}} \leq \delta$.
\item $\Bignorm{\diff\Big(\big((1-\delta)\F{h}{g}\big)^{1/2}\Big)}_{L^n(g)}^2 \leq \eps$ \;\;and\;\; $\Bignorm{\laplace_g\Big((1-\delta)\F{h}{g}\Big)}_{L^{n/2}(g)} \leq \eps$.
\end{enumerate}
\end{corollary}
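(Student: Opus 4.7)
The plan is to derive this Corollary as an immediate specialization of Lemma \ref{topologylemmastrong}. The Corollary is essentially the ``$i=0$'' case of the Lemma once we arrange for $M\without K_0$ to be all of $M$.

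Concretely, first I would pick any compact exhaustion $(K_i)_{i\in\N}$ of $M$ with the choice $K_0\define\varnothing$ (recall from the \emph{Compact exhaustions} paragraph in Section \ref{preliminaries} that every manifold admits a compact exhaustion, and the empty set is compact and trivially contained in the interior of $K_1$, so prepending $\varnothing$ to any given exhaustion still yields an exhaustion). Next, I would take the constant sequence $\eps_i\define\eps$ for all $i\in\N$ and apply Lemma \ref{topologylemmastrong} with this exhaustion and sequence. This produces a fine $C^2$-neighborhood $\mathcal{U}$ of $g$ and a function $\delta\in C^\infty(M,\R_{>0})$ satisfying items (1)--(5) of the Lemma for every $h\in\mathcal{U}$.

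Finally I would read off the Corollary by specializing to $i=0$: since $M\without K_0 = M$, the inequalities $\delta\restrict_{M\without K_0}\leq\eps_0$, $\norm{\delta}_{L^{n/2}(M\without K_0,g)}\leq\eps_0$, $\norm{\delta\,\scal_g}_{L^{n/2}(M\without K_0,g)}\leq\eps_0$ become exactly condition (1) of the Corollary, and similarly the $i=0$ instance of (5) of the Lemma becomes (5) of the Corollary. Conditions (2), (3), (4) are pointwise statements independent of the exhaustion and are literally inherited from (2), (3), (4) of the Lemma. There is no obstacle here, since all of the real work has been done in Lemma \ref{topologylemmastrong}; the Corollary is merely the version one actually needs in the continuity proofs, packaged without the extra machinery of a compact exhaustion.
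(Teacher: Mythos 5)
Your proposal is correct and is exactly the paper's own argument: the authors likewise choose a compact exhaustion with $K_0=\varnothing$, take the constant sequence $\eps_i=\eps$, and read off the Corollary as the $i=0$ case of Lemma \ref{topologylemmastrong}. Nothing further is needed.
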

\begin{proof}
We choose any compact exhaustion $(K_i)_{i\geq0}$ of $M$ with $K_0=\leer$ and consider the sequence $(\eps_i)_{i\geq0}$ with $\forall i:\eps_i=\eps$. The claim of the Corollary is the $i=0$ statement of Lemma \ref{topologylemmastrong}.
\end{proof}

\section{Fine continuity: proofs of the theorems \ref{finecontinuity} and \ref{locallyconstant} and \ref{imagetheoremclopen}}

\begin{proof}[Proof of Theorem \ref{finecontinuity}]
By Corollary \ref{upper}, with respect to the fine $C^2$-topology $\Yam_M$ is upper semicontinuous, and continuous at every $g$ with $\Yam_M(g)=-\infty$. It remains to prove lower semicontinuity at each $g\in\Metr(M)$ with $\Yam_M(g)>-\infty$. For such a $g$, let $\eps_0\in\R_{>0}$.

\smallskip
Let $p=\frac{2n}{n-2}$. We choose $\eps\in\ooi{0}{1}$ so small that
\begin{equation} \label{3cond} \begin{split}
\eps(1-\eps)^{-2/p}\left(\frac{3a_n}{2}+4\right) &\leq \eps_0 ,\\
\Big((1-\eps)^{-2/p}-1\Big)\abs{\Yam_M(g)} &\leq \eps_0 ,\\
\Big(1-(1+\eps)^{-2/p}(1-\eps)^2\Big)\abs{\Yam_M(g)} &\leq \eps_0 .
\end{split} \end{equation}
There exist a fine $C^2$-neighborhood $\mathcal{U}$ of $g$ and a function $\delta\in C^\infty(M,\R_{>0})$ with the properties stated in Corollary \ref{topologylemma}. For every $h\in\mathcal{U}$ and every $v\in C_c^\infty(M,\R_{\geq0})$ with $\int_M v^p\,\diff\mu_h = 1$, we have to estimate $\Yamfunc_h(v)$ from below.

\smallskip
Since $\delta\leq\eps<1$ by \ref{topologylemma}(1), we can consider $w = \big(\F{h}{g}(1-\delta)\big)^{1/2}\,v \in C_c^\infty(M,\R_{\geq0})\without\set{0}$. With \ref{topologylemma}(2,3), we obtain:
\[ \begin{split}
\Yamfunc_h(v)
&= a_n\int_M\abs{\diff v}_h^2\,\diff\mu_h
+\int_M\scal_h\,v^2\,\diff\mu_h\\
&\geq a_n\int_M(1-\delta)\,\F{h}{g}\,\abs{\diff v}_g^2\,\diff\mu_g
+\int_M \F{h}{g}(\scal_g-\delta)\,v^2\,\diff\mu_g\\
&\geq a_n\int_M\abs{\diff w}_g^2\,\diff\mu_g
-a_n\int_M v^2\,\Bigabs{\diff\Big(\big(\F{h}{g}(1-\delta)\big)^{1/2}\Big)}_g^2\,\diff\mu_g\\
&\mspace{20mu}-2a_n\int_M \Bigeval{v\,\diff v}{\, \big(\F{h}{g}(1-\delta)\big)^{1/2}\, \diff\Big(\big(\F{h}{g}(1-\delta)\big)^{1/2}\Big)}_g\,\diff\mu_g\\
&\mspace{20mu}+\int_M\scal_g(1-\delta)\,\F{h}{g}\,v^2\,\diff\mu_g
-\int_M\scal_g(1-\delta)\,\F{h}{g}\,v^2\,\diff\mu_g
+\int_M\F{h}{g}\,(\scal_g-\delta)\,v^2\,\diff\mu_g\\
&= \Yamfunc_g(w)\norm{w}_{L^p(g)}^2
-a_n\int_M v^2\,\Bigabs{\diff\Big(\big(\F{h}{g}(1-\delta)\big)^{1/2}\Big)}_g^2\,\diff\mu_g\\
&\mspace{20mu}-\frac{a_n}{2}\int_M v^2\,\laplace_g\big(\F{h}{g}(1-\delta)\big)\,\diff\mu_g
+\int_M\F{h}{g}\,\delta\,\scal_g\,v^2\,\diff\mu_g
-\int_M\F{h}{g}\,\delta\,v^2\,\diff\mu_g .
\end{split} \]
Corollary \ref{topologylemma}(4) yields $1-\eps\leq \F{h}{g}\leq 1+\eps$. Thus $\norm{v}_{L^p(g)} \leq (1-\eps)^{-1/p}\,\norm{v}_{L^p(h)} = (1-\eps)^{-1/p}$. Using this estimate and \ref{topologylemma}(1,5) and
\[
\bignorm{\F{h}{g}\,\delta\,\scal_g}_{L^{n/2}(g)} \leq \bignorm{\F{h}{g}}_{L^\infty(g)}\,\bignorm{\delta\,\scal_g}_{L^{n/2}(g)} \leq 2\,\bignorm{\delta\,\scal_g}_{L^{n/2}(g)} ,
\]
we obtain:
\[ \begin{split}
\Yamfunc_h(v)
&\geq \Yam_M(g)\norm{w}_{L^p(g)}^2
-a_n(1-\eps)^{-2/p}\, \Bignorm{\diff\Big(\big(\F{h}{g}(1-\delta)\big)^{1/2}\Big)}_{L^n(g)}^2\\
&\mspace{20mu}
-\frac{a_n}{2}(1-\eps)^{-2/p}\,\bignorm{\laplace\big(\F{h}{g}(1-\delta)\big)}_{L^{n/2}(g)}
-2(1-\eps)^{-2/p}\,\norm{\delta\,\scal_g}_{L^{n/2}(g)}\\
&\mspace{20mu}-2(1-\eps)^{-2/p}\,\norm{\delta}_{L^{n/2}(g)}\\
&\geq \Yam_M(g)\norm{w}_{L^p(g)}^2
-\eps(1-\eps)^{-2/p}\left(\frac{3a_n}{2}+4\right)\\
&\geq \Yam_M(g)\norm{w}_{L^p(g)}^2 -\eps_0.
\end{split} \]
Since $w^2 = (1-\delta)\F{h}{g}\,v^2 \leq (1-\delta)(1+\delta)v^2 \leq v^2$ by Corollary \ref{topologylemma}(1,4), we have
\[
\norm{w}_{L^p(g)}^2 \leq (1-\eps)^{-2/p}\,\norm{w}_{L^p(h)}^2
\leq (1-\eps)^{-2/p}\,\norm{v}_{L^p(h)}^2 = (1-\eps)^{-2/p} .
\]
On the other hand, $w^2 = (1-\delta)\F{h}{g}\,v^2 \geq (1-\delta)^2v^2 \geq (1-\eps)^2v^2$ yields
\[
\norm{w}_{L^p(g)}^2 \geq (1+\eps)^{-2/p}\norm{w}_{L^p(h)}^2 \geq (1+\eps)^{-2/p}(1-\eps)^2\norm{v}_{L^p(h)}^2 = (1+\eps)^{-2/p}(1-\eps)^2 .
\]
Therefore we obtain from \eqref{3cond}:
\[
\Yamfunc_h(v) \geq
\left.\begin{cases}
(1-\eps)^{-2/p}\,\Yam_M(g)-\eps_0 & \text{if $\Yam_M(g)\leq 0$}\\
(1+\eps)^{-2/p}(1-\eps)^2\,\Yam_M(g)-\eps_0 & \text{if $\Yam_M(g)> 0$}
\end{cases}\right\}
\geq \Yam_M(g)-2\eps_0 .
\]
This holds for all $v\in C_c^\infty(M,\R_{\geq0})$ with $\int_M v^p\,\diff\mu_h = 1$ and thus for all $v\in C_c^\infty(M,\R_{\geq0})\without\set{0}$. Taking the infimum over all such $v$ yields $\Yam_M(h)\geq \Yam_M(g)-2\eps_0$. Since for every $\eps_0\in\R_{>0}$ there exists a neighborhood $\mathcal{U}$ of $g$ such that this is true for all $h\in\mathcal{U}$, the map $\Yam_M$ is lower semicontinuous at $g$.
\end{proof}

Following essentially the same proof we would see that also $\Yaminfty_M$ is continuous with respect to the fine $C^2$-topology. But we will show even more: that $\Yaminfty_M$ is locally constant.

\begin{proof}[Proof of Theorem \ref{locallyconstant}]
We have to show that each $g\in \Metr(M)$ has a fine $C^2$-neighborhood on which $\Yaminfty_M$ is constant. Let $(K_i)_{i\in\N}$ be a compact exhaustion of $M$. We first study the case where $\Yam_{M\without K_{i_0}}(g) > -\infty$ holds for some $i_0\in\N$. By Fact \ref{increasing}, $\Yam_{M\without K_i}(g) > -\infty$ holds then for all $i\geq i_0$.

\smallskip
For $i>i_0$, there exists a $v_i\in C^\infty(M,\R_{\geq0})$ which has compact support in $M\without K_i$ and satisfies $\norm{v_i}_{L^p(g)} = 1$ and $\Yamfunc_g(v_i) \leq \Yam_{M\without K_i}(g) +i^{-1}$. For $A_i\define \int_M(\scal_g)_-\,v_i^2\,\diff\mu_g$ (which is a finite number because $v_i$ has compact support), we choose $\eps_i\in\R_{>0}$ so small that
\begin{align} \label{2cond} \begin{split}
(1-{\eps_i})^{-2/p}\Big((1+\eps_i)^2i^{-1} +(\eps_i^2+3\eps_i)A_i +(1+\eps_i)\eps_i\Big )
&\leq 2i^{-1} ,\\
(1-\eps_i)^{-2/p}\,(1+\eps_i)^2\,\abs{\Yam_{M\without K_i}(g)}
&\leq \abs{\Yam_{M\without K_i}(g)} +i^{-1}
\end{split} \end{align}
and
\begin{equation} \label{3condb} \begin{split}
\eps_i(1-\eps_i)^{-2/p}\left(\frac{3a_n}{2}+4\right) &\leq i^{-1} ,\\
\Big((1-\eps_i)^{-2/p}-1\Big)\abs{\Yam_{M\without K_i}(g)} &\leq i^{-1} ,\\
\Big(1-(1+\eps_i)^{-2/p}(1-\eps_i)^2\Big)\abs{\Yam_{M\without K_i}(g)} &\leq i^{-1} .
\end{split} \end{equation}

We choose $\eps_i\in\R_{>0}$ arbitrarily for $i\leq i_0$. For the resulting sequence $(\eps_i)_{i\in\N}$, there exist a function $\delta\in C^\infty(M,\R_{>0})$ and a fine $C^2$-neighborhood $\mathcal{U}$ of $g$ with the properties stated in Lemma \ref{topologylemmastrong}. We obtain for every $h\in\mathcal{U}$ and every $i>i_0$:
\[ \begin{split}
\Yam_{M\without K_i}(h) &\leq \Yamfunc_h(v_i)
= \norm{v_i}_{L^p(h)}^{-2} \left( a_n\int_{M\without K_i}\abs{\diff v_i}_h^2\,\diff\mu_h
+\int_{M\without K_i}\scal_h\,v_i^2\,\diff\mu_h \right)\\
&\leq \norm{v_i}_{L^p(h)}^{-2} \left(
a_n\int_{M\without K_i}(1+\eps_i)^2\,\abs{\diff v_i}_g^2\,\diff\mu_g
+\int_{M\without K_i}(\delta+\scal_g)\,v_i^2\,\F{h}{g}\,\diff\mu_g \right)\\
&\leq \norm{v_i}_{L^p(h)}^{-2} \left( (1+\eps_i)^2\,\Yamfunc_g(v_i)
-(1+\eps_i)^2\int_{M\without K_i}\scal_g\,v_i^2\,\diff\mu_g \right.\\[-1ex]
&\mspace{110mu}\left. +\int_{M\without K_i}\scal_g\,v_i^2\,\F{h}{g}\,\diff\mu_g
+(1+\eps_i)\norm{\delta}_{L^{n/2}(M\without K_i;g)} \right).
\end{split} \]
Using $-\eps_i^2 -3\eps_i = (1-\eps_i) -(1+\eps_i)^2 \leq \F{h}{g}\big|_{M\without K_i} -(1+\eps_i)^2
\leq (1+\eps_i) -(1+\eps_i)^2 < 0$, we get
\[ \begin{split}
\Yam_{M\without K_i}(h)
&\leq \norm{v_i}_{L^p(h)}^{-2}\left( (1+{\eps_i})^2 \Yamfunc_g(v_i)
+\int_{M\without K_i} \Big(\F{h}{g}-(1+\eps_i)^2\Big)\,\scal_g\,v_i^2\,\diff\mu_g
+(1+{\eps_i})\eps_i\right)\\
&\leq \norm{v_i}_{L^p(h)}^{-2}\left((1+{\eps_i})^2 \Yamfunc_g(v_i)
+\int_{M\without K_i} \Big((1+\eps_i)^2-\F{h}{g}\Big)(\scal_g)_-\,v_i^2\,\diff\mu_g
+(1+\eps_i)\eps_i\right)\\
&\leq \norm{v_i}_{L^p(h)}^{-2}\left((1+\eps_i)^2 \Yamfunc_g(v_i) +(\eps_i^2+3{\eps_i})\int_{M\without K_i}(\scal_g)_-\,v_i^2\,\diff\mu_g
+(1+\eps_i)\eps_i\right)\\
&\leq \norm{v_i}_{L^p(h)}^{-2}\Big( (1+\eps_i)^2 \big(\Yam_{M\without K_i}(g)+i^{-1}\big) +(\eps_i^2+3\eps_i)A_i +(1+\eps_i)\eps_i \Big) .
\end{split} \]
Since $(1-\eps_i)^{2/p} \leq \norm{v_i}_{L^p(M\without K_i;h)}^{2}\leq (1+\eps_i)^{2/p}$ and $A_i\geq0$ and $2-\frac{2}{p}>0$, we obtain from \eqref{2cond} in the case $\Yam_{M\without K_i}(g)<0$:
\[ \begin{split}
\Yam_{M\without K_i}(h)
&\leq \frac{(1+\eps_i)^2i^{-1} +(\eps_i^2+3\eps_i)A_i +(1+\eps_i)\eps_i}{(1-\eps_i)^{2/p}} +\frac{(1+\eps_i)^2\,\Yam_{M\without K_i}(g)}{(1+\eps_i)^{2/p}}
\leq \Yam_{M\without K_i}(g) +\frac{2}{i} ;
\end{split} \]
and in the case $\Yam_{M\without K_i}(g)\geq0$:
\[ \begin{split}
\Yam_{M\without K_i}(h)
&\leq \frac{(1+\eps_i)^2i^{-1} +(\eps_i^2+3\eps_i)A_i +(1+\eps_i)\eps_i}{(1-\eps_i)^{2/p}} +\frac{(1+\eps_i)^2\,\Yam_{M\without K_i}(g)}{(1-\eps_i)^{2/p}}
\leq \Yam_{M\without K_i}(g) +\frac{3}{i} .
\end{split} \]
As this holds for every $i>i_0$, we have $\Yaminfty_M(h)\leq\Yaminfty_M(g)$ for all $h\in\mathcal{U}$.

\medskip
The proof of $\Yaminfty_M(h)\geq \Yaminfty_M(g)$ works now almost exactly as the estimates in the lower semicontinuity part of the proof of Theorem \ref{finecontinuity}: We replace every $\eps$ by $\eps_i$, replace every $M$ by $M\without K_i$, replace every $\eps_0$ by $i^{-1}$, consider test functions $v\in C^\infty_c(M\without K_i,\R_{\geq0})\without\set{0}$ instead of $v\in C^\infty_c(M,\R_{\geq0})\without\set{0}$, define $w\define\big(\F{h}{g}(1-\delta)\big)^{1/2}v$ as before, use \eqref{3condb} instead of \eqref{3cond}, and apply the properties of $\delta$ and $\mathcal{U}$ from Lemma \ref{topologylemmastrong} instead of Corollary \ref{topologylemma}. For each $i>i_0$, we obtain in this way $\Yamfunc_h(v)\geq \Yam_{M\without K_i}(g) -2i^{-1}$ for all $v\in C^\infty_c(M\without K_i,\R_{\geq0})\without\set{0}$, hence $\Yam_{M\without K_i}(h) \geq \Yam_{M\without K_i}(g) -2i^{-1}$. This implies $\Yaminfty_M(h)\geq\Yaminfty_M(g)$.

\smallskip
Thus each $g\in \Metr(M)$ with $\Yam_{M\without K_i}(g)>-\infty$ has a fine $C^2$-neighborhood $\mathcal{U}$ on which $\Yaminfty_M$ is constant.

\medskip
It remains to consider the case where $\Yam_{M\without K_i}(g)=-\infty$ for all $i\in\N$. For every $i>0$, there exists a function $v_i\in C_c^\infty(M\without K_i,\R_{\geq0})$ with $\norm{v_i}_{L^p(g)}=1$ and $\Yamfunc_g(v_i)\leq-i$. For $A_i \define \int_M(\scal_g)_-\,v_i^2\,\diff\mu_g$, we choose $\eps_i\in\R_{>0}$ so small that the first inequality of \eqref{2cond} is valid. There exist a fine $C^2$-neighborhood $\mathcal{U}$ of $g$ and a function $\delta\in C^\infty(M,\R_{>0})$ with the properties stated in Lemma \ref{topologylemmastrong}. The same estimate as above yields
\[ \begin{split}
\Yam_{M\without K_i}(h)
&\leq \norm{v_i}_{L^p(h)}^{-2}\left((1+\eps_i)^2 \Yamfunc_g(v_i) +(\eps_i^2+3{\eps_i})\int_{M\without K_i}(\scal_g)_-\,v_i^2\,\diff\mu_g
+(1+\eps_i)\eps_i\right)\\
&\leq \frac{(1+\eps_i)^2 \Yamfunc_g(v_i)}{(1+\eps_i)^{2/p}} +\frac{(1+\eps_i^2)i^{-1} +(\eps_i^2+3\eps_i)A_i +(1+\eps_i)\eps_i}{(1-\eps_i)^{2/p}}\\
&\leq -i +2i^{-1}.
\end{split} \]
Since this holds for all $i>0$, we obtain $\Yaminfty_M(h)=-\infty$ for all $h\in\mathcal{U}$.

\smallskip
Hence, in each case, every $g\in\mathcal{U}$ has a fine $C^2$-neighborhood on which $\Yaminfty_M$ is constant.
\end{proof}

\begin{proof}[Proof of Theorem \ref{imagetheoremclopen}]
$\Yam_M^{-1}(\set{-\infty})$ is fine $C^2$-closed in $\Metr(M)$ because of Theorem \ref{finecontinuity}. Theorem \ref{gtheorem}(\ref{gtheoremineq},\ref{gtheoreminftyb}) tells us that $\Yam_M^{-1}(\set{-\infty})$ is equal to $\Yaminfty_M^{-1}(\set{-\infty})$. Theorem \ref{locallyconstant} implies that $\Yaminfty_M^{-1}(\set{-\infty})$ is fine $C^2$-open in $\Metr(M)$.
\end{proof}

\section{Proof of Theorem \ref{imagetheorem}}

\begin{lemma} \label{cylinder}
Let $(M,h)$ be a closed Riemannian manifold of dimension $n\geq2$. If $n\geq3$, assume $\Yam_M(h)<0$. If $n=2$, assume that $M$ has negative Euler characteristic. Then there exists an $i_h\in\R_{>0}$ such that for every $i\in\coi{i_h}{\infty}$ and every Riemannian metric $g$ on $M\times\R$ which coincides with $h+\diff t^2$ on $M\times[0,3i]$, the inequality $\Yam_{M\times\R}(g) \leq -i^{1/(n+1)}$ holds. In particular $\Yam_{M\times \R}(h+\diff t^2) = -\infty$.
\end{lemma}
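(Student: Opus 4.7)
The strategy is to exhibit explicit test functions supported in the cylinder $M\times[0,3i]$, on which $g$ coincides with the product metric $h+\diff t^2$, and to show that their Yamabe energy diverges to $-\infty$ at a polynomial rate in $i$. Concretely, I would take $v_i(x,t)=u(x)\chi_i(t)$, where $\chi_i\in C_c^\infty(\R,[0,1])$ is supported in $[0,3i]$, equals $1$ on $[i,2i]$, and has $\abs{\chi_i'}\leq 1/i$; and $u\in C^\infty(M,\R_{\geq 0})\without\set{0}$ is chosen so that the $M$-part of the energy is strictly negative. For $n\geq 3$ the assumption $\Yam_M(h)<0$ provides some $u$ with $\Yamfunc_h(u)<0$ by definition of the infimum; for $n=2$ take $u\equiv 1$ and use Gauss--Bonnet, which gives $\int_M\scal_h\,\diff\mu_h=4\pi\chi(M)<0$.

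Since $g$ and $h+\diff t^2$ agree on the support of $v_i$, the tangential/normal split of the gradient and $\scal_g=\scal_h$ there let me factorize. Writing $p\define p_{n+1}=2(n+1)/(n-1)$ and $a\define a_{n+1}=4n/(n-1)$, the product structure yields
\[
\Yamfunc_g(v_i) \;=\; \frac{\norm{\chi_i}_{L^2(\R)}^2\,N_1 \;+\; a\,\norm{\chi_i'}_{L^2(\R)}^2\,N_2}{\norm{\chi_i}_{L^p(\R)}^2\,\norm{u}_{L^p(h)}^2},
\]
with $N_1\define\int_M(a\,\abs{\diff u}_h^2+\scal_h\,u^2)\,\diff\mu_h$ and $N_2\define\int_Mu^2\,\diff\mu_h$. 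The elementary inequality $(n-1)^2>n(n-2)$ gives $a_{n+1}<a_n$ for $n\geq 3$, so $N_1\leq \Yamfunc_h(u)\,\norm{u}_{L^{p_n}(h)}^2<0$; for $n=2$ the gradient term vanishes and $N_1=\int_M\scal_h\,\diff\mu_h<0$ directly. Either way, $N_1$ is a strictly negative constant depending only on $(M,h,u)$.

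A routine computation of the $\chi_i$-norms gives $\norm{\chi_i}_{L^2(\R)}^2\geq i$, $\norm{\chi_i}_{L^p(\R)}^2\leq(3i)^{2/p}$, and $\norm{\chi_i'}_{L^2(\R)}^2\leq 2/i$. Plugging these into the formula above and using $1-2/p=2/(n+1)$ produces an upper bound of the form
\[
\Yamfunc_g(v_i) \;\leq\; -c_1\,i^{2/(n+1)} \;+\; c_2\,i^{-1-2/p},
\]
with positive constants $c_1,c_2$ depending only on $(M,h,u)$. Since $2/(n+1)>1/(n+1)$, one can choose $i_h$ large enough that this right-hand side is $\leq -i^{1/(n+1)}$ for all $i\geq i_h$. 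Taking the infimum over test functions gives $\Yam_{M\times\R}(g)\leq -i^{1/(n+1)}$. The ``in particular'' clause follows by applying the bound to $g=h+\diff t^2$ (which trivially satisfies the hypothesis for every $i$), and letting $i\to\infty$.

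The main technical hurdle is the mismatch between the conformal Laplacian constants $a_n$ and $a_{n+1}$: the function $u$ is a near-minimizer of the $n$-dimensional Yamabe functional but must be fed into the $(n+1)$-dimensional one. The observation $a_{n+1}<a_n$, together with $\int\abs{\diff u}^2\geq 0$, just barely saves the sign of $N_1$; once that sign is negative, the cutoff scaling $2/(n+1)>0$ does the rest and no subtler conformal input is needed.
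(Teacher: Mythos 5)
Your proposal is correct and follows essentially the same route as the paper: the same product test functions $u(x)\chi_i(t)$ supported in $M\times[0,3i]$, the same separation of the energy into an $M$-part and a $\chi_i$-part, the same use of $a_{n+1}<a_n$ to keep the $M$-part negative, and the same cutoff scaling $1-2/p=2/(n+1)$. The only (harmless) variations are that you take a near-minimizer $u$ with $\Yamfunc_h(u)<0$ straight from the definition of the infimum, whereas the paper invokes the solved Yamabe problem to use an exact minimizer, and that the gradient bound $\abs{\chi_i'}\leq 1/i$ should be relaxed to $\abs{\chi_i'}\leq C/i$ for some $C>1$ so that a smooth cutoff actually exists.
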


\begin{proof}
Let $p=\frac{2(n+1)}{(n+1)-2}$. If $\dim(M)\geq3$, then by the solution of the Yamabe problem on closed manifolds, there is a function $w\in C^\infty(M,\R_{>0})$ with $\Yamfunc_h(w) = \Yam_M(h)$ and $\norm{w}_{L^q(h)}=1$, where $q=\frac{2n}{n-2}$. Since $\Yam_M(h)<0$, there exists a number $i_h>0$ such that
\[
\frac{8a_n\,\norm{w}_{L^2(h)}^2}{i^{1+2/p}\,\norm{w}_{L^p(h)}^2} +i^{2/(n+1)}\,\frac{\Yam_M(h)}{3^{2/p}\,\norm{w}_{L^p(h)}^2}
\leq -i^{1/(n+1)}
\]
holds for all $i\in\coi{i_h}{\infty}$. For such an $i$, let $g$ be a Riemannian metric on $M\times\R$ which coincides with $h+\diff t^2$ on $M\times[0,3i]$.

\smallskip
We choose a function $u_i\in C^\infty(\R,[0,1])$ with $\supp(u_i)\subset[0,3i]$ and $u_i\restrict_{[i,2i]}\equiv1$ and $\abs{u_i'}\leq\frac{2}{i}$. Then $i \leq \norm{u_i}_{L^2(\R)}^2$ and $\norm{u_i'}_{L^2(\R)}^2 \leq \frac{4}{i^2}\cdot2i = \frac{8}{i}$ and $i^{2/p} \leq \norm{u_i}_{L^p(\R)}^2 \leq (3i)^{2/p}$, hence
\begin{align*}
\frac{\norm{u_i'}_{L^2(\R)}^2}{\norm{u_i}_{L^p(\R)}^2} &\leq \frac{8}{i^{1+2/p}},
&\frac{\norm{u_i}_{L^2(\R)}^2}{\norm{u_i}_{L^p(\R)}^2} &\geq \frac{i}{(3i)^{2/p}} = \frac{i^{2/(n+1)}}{3^{2/p}}.
\end{align*}
We consider the function $v_i\in C^\infty(M\times\R,\R_{\geq0})\without\set{0}$ defined by $v_i(x,t)\define w(x)u_i(t)$.

\smallskip
Since $\scal_g(x,t) = \scal_h(x)$ and $\diff_{(x,t)}v_i(z,1) = u_i(t)\diff_xw(z) +w(x)u_i'(t)$ for all $(x,t)\in M\times[0,3i]$ and $z\in T_xM$, and since $a_{n+1} = \frac{4n}{n-1} < \frac{4(n-1)}{n-2} = a_n$ and $\Yam_M(h)<0$, we obtain:
\[ \begin{split}
\Yamfunc_g(v_i)
&= \frac{\displaystyle \int_{M\times\R}\bigg(a_{n+1}\,\abs{\diff v_i}_g^2 +\scal_g\,v_i^2\bigg)\diff\mu_g}{\displaystyle\bigg(\int_{M\times\R}v_i^p\,\diff\mu_g\bigg)^{2/p}}
\leq \frac{\displaystyle \int_{M\times[0,3i]}\bigg(a_n\abs{\diff v_i}_g^2 +\scal_g\,v_i^2\bigg)\diff\mu_g}{\displaystyle\bigg(\int_{M\times[0,3i]}v_i^p\,\diff\mu_g\bigg)^{2/p}}
\\
&= \frac{\displaystyle a_n\int_Mw^2\,\diff\mu_h \cdot \int_{[0,3i]}(u_i')^2\,\diff t
+\int_M\bigg(a_n\abs{\diff w}_h^2 +\scal_h\,w^2\bigg)\diff\mu_h \cdot\int_{[0,3i]}u_i^2\,\diff t
}{\displaystyle\bigg(\int_Mw^p\,\diff\mu_h\cdot \int_{[0,3i]}u_i^p\,\diff t\bigg)^{2/p}}\\
&= \frac{a_n\,\norm{w}_{L^2(h)}^2\,\norm{u_i'}_{L^2(\R)}^2 +\Yam_M(h)\norm{u_i}_{L^2(\R)}^2
}{\norm{w}_{L^p(h)}^2 \, \norm{u_i}_{L^p(\R)}^2}\\
&\leq \frac{8a_n\,\norm{w}_{L^2(h)}^2}{i^{1+2/p}\,\norm{w}_{L^p(h)}^2} +i^{2/(n+1)}\,\frac{\Yam_M(h)}{3^{2/p}\,\norm{w}_{L^p(h)}^2}
\leq -i^{1/(n+1)} .
\end{split} \]
Thus $\Yam_{M\times\R}(g) \leq \Yamfunc_g(v_i) \leq -i^{1/(n+1)}$ and $\Yam_{M\times \R}(h+\diff t^2) \leq \inf\set{-i^{1/(n+1)} \suchthat i\in\coi{i_h}{\infty}} = -\infty$.

\smallskip
It remains to prove the case where $M$ is a closed $2$-manifold with $\chi(M)<0$. There exists an $i_h\in\R_{>0}$ with
\[
\forall i\in\coi{i_h}{\infty}\colon\; \frac{8a_3\,\norm{1}_{L^2(h)}^2}{i^{4/3}\,\norm{1}_{L^6(h)}^2} +i^{2/3}\,\frac{4\pi\chi(M)}{3^{1/3}\norm{1}_{L^6(h)}^2}
\leq -i^{1/3}.
\]
We take $w=1$ and define $u_i,v_i$ as before. Using the Gauss--Bonnet theorem $\int_M\scal_h\,\diff\mu_h = 4\pi\chi(M)$, we obtain similarly as above (with $p=\frac{2\cdot3}{3-2} = 6$) for $i\geq i_h$:
\[ \begin{split}
E_g(v_i) &= \frac{\displaystyle a_3\norm{w}_{L^2(h)}^2 \,\norm{u_i'}_{L^2(\R)}^2
+\int_M\bigg(a_3\abs{\diff w}_h^2 +\scal_h\,w^2\bigg)\diff\mu_h \,\norm{u_i}_{L^2(\R)}^2
}{\norm{w}_{L^p(h)}^2 \, \norm{u_i}_{L^p(\R)}^2}\\
&\leq \frac{8a_3\,\norm{1}_{L^2(h)}^2}{i^{1+2/6}\,\norm{1}_{L^6(h)}^2} +i^{2/3}\,\frac{4\pi\chi(M)}{3^{1/3}\,\norm{1}_{L^6(h)}^2}
\leq -i^{1/3} .
\end{split} \]
Thus $\Yam_{M\times\R}(g) \leq \Yamfunc_g(v_i) \leq -i^{1/3}$ and $\Yam_{M\times \R}(h+\diff t^2) \leq \inf\set{-i^{1/3} \suchthat i\in\coi{i_h}{\infty}} = -\infty$.
\end{proof}

\begin{lemma} \label{balls}
Let $m,n\in\N_{\geq3}$, let $g_0$ be a Riemannian metric on the open $n$-ball $B^n$. Then there is a metric $g\in\Metr(B^n)$ with $\Yam_{B^n}(g)\leq -m$ which coincides with $g_0$ outside a compact subset $K$ of $B^n$.
\end{lemma}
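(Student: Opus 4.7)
The plan is to carve out inside $B^n$ a long exact product region of the form $N\times[-L,L]$ with metric $h+\diff t^2$, where $(N,h)$ satisfies the hypotheses of Lemma~\ref{cylinder}, and then to transplant the test functions from that lemma's proof to push $\Yam_{B^n}(g)$ below $-m$. The key preparatory step is to exhibit, for each $n\ge 3$, a closed orientable $(n{-}1)$-manifold $N$ that embeds in $B^n$ as a separating hypersurface with trivial normal bundle and carries a metric $h$ with $\Yam_N(h)<0$ (when $n\ge 4$) or $\chi(N)<0$ (when $n=3$). For $n=3$ take $N=\Sigma_\gamma\subset\R^3\cong B^3$ with $\gamma\ge 2$. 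For $n\ge 4$ embed $\Sigma_\gamma\subset\R^3\subset\R^n\cong B^n$ with $\gamma\ge 2$ and let $N=\Sigma_\gamma\times S^{n-3}$ be the boundary of a tubular neighborhood of $\Sigma_\gamma$ in $\R^n$; its normal bundle in $\R^n$ is trivial since the normal line of $\Sigma_\gamma$ in $\R^3$ is trivial and the orthogonal complement of $\R^3$ in $\R^n$ contributes a trivial $(n{-}3)$-plane. Using the standard fact that every closed manifold of dimension $\ge 3$ admits a metric of negative scalar curvature, $N$ (for $n\ge 4$) carries an $h$ with $\scal_h<0$ everywhere, whence $\Yam_N(h)\le\Yamfunc_h(1)<0$ since the constant test function has vanishing gradient.

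Next I build $g$. Fix a tubular neighborhood $T\subset B^n$ of the embedded $N$, identified with $N\times[-\delta,\delta]$ and having compact closure in $B^n$. For $i\in\N$, define $g=g_i$ by the following prescriptions: $g=g_0$ outside $T$; on the middle slab $N\times[-\delta/2,\delta/2]$, $g$ equals the warped metric $h(x)+b^2\,\diff\tau^2$ with $b\define 6i/\delta$, which in the arc-length coordinate $t=b\tau$ becomes precisely the product $h+\diff t^2$ on $N\times[-3i,3i]$; on the two buffer zones $N\times\big([-\delta,-\delta/2]\cup[\delta/2,\delta]\big)$, $g$ is a smooth convex combination of $h+b^2\,\diff\tau^2$ and the representation of $g_0$ in the tube coordinates, interpolating from the warped product at $|\tau|=\delta/2$ to $g_0$ at $|\tau|=\delta$. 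Convex combinations of positive-definite symmetric $2$-tensors are positive definite, so this defines a smooth Riemannian metric $g$ on $B^n$ which agrees with $g_0$ outside the compact set $K\define\overline{T}$.

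Finally, transplant the test function from the proof of Lemma~\ref{cylinder}: with $w,u_i$ as in that proof, put $v_i(x,t)=w(x)u_i(t)$. Then $\supp(v_i)\subset N\times[0,3i]$ sits inside the product region where $g=h+\diff t^2$, so $v_i$ extends by $0$ to an element of $C^\infty_c(B^n,\R_{\geq0})\without\set{0}$. The computation from the proof of Lemma~\ref{cylinder} then applies to this $v_i$ verbatim and yields $\Yam_{B^n}(g)\le\Yamfunc_g(v_i)\le -i^{1/n}$ for $i\ge i_h$. Choosing any $i\ge\max\set{i_h,m^n}$ forces $\Yam_{B^n}(g)\le -m$, as required.

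The principal obstacle is the initial topological step — producing, for every $n\ge 3$, a closed hypersurface of $B^n$ with trivial normal bundle that carries a metric of negative Yamabe type (or, in the $n=3$ case, has negative Euler characteristic). Once $N$ is in hand, the rest is a routine smoothing of the metric in the buffer zones followed by a direct application of Lemma~\ref{cylinder}'s test-function estimate.
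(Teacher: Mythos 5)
Your proof is correct and follows essentially the same route as the paper: embed a closed hypersurface with trivial normal bundle that has negative Yamabe constant (resp.\ negative Euler characteristic when $n=3$), stretch its tubular neighborhood into a long exact Riemannian cylinder, and apply the test-function estimate of Lemma~\ref{cylinder}. The only difference is cosmetic: for $n\ge4$ the paper simply takes the hypersurface to be $S^{n-1}$, which already has dimension $\ge3$ and hence admits a metric of scalar curvature $-1$, so your more elaborate $\Sigma_\gamma\times S^{n-3}$ construction is unnecessary (though valid).
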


\begin{proof}
Let $M$ be an $(n-1)$-dimensional compact submanifold of $B^n$ which admits a Riemannian metric $h$ of scalar curvature $-1$ (and hence $\Yam_M(h)<0$ if $n\geq4$, and $\chi(M)<0$ if $n=3$): If $n\geq4$, we can choose $M$ diffeomorphic to $S^{n-1}$; if $n=3$, we can choose $M$ diffeomorphic to a closed orientable surface of genus $2$. There exist a relatively compact (tubular) neighborhood $U$ of $M$ in $B^n$ and a diffeomorphism $\varphi\colon M\times\R\to U$. Let $i_h$ be as in Lemma \ref{cylinder} (with the $n$ there replaced by $n-1$). We choose a number $i\geq i_h$ with $i^{1/n}\geq m$, and a Riemannian metric $g$ on $B^n$ whose restriction to $\varphi(M\times[0,3i])$ is $(\varphi^{-1})^\ast(h+\diff t^2)$ and whose restriction to $B^n\without\bar{U}$ is $g_0$. This yields $\Yam_{B^n}(g) \leq \Yam_{M\times\R}(\varphi^\ast g) \leq -i^{1/n} \leq -m$ by Fact \ref{increasing} and Lemma \ref{cylinder}.
\end{proof}

\begin{proof}[Proof of Theorem \ref{imagetheorem}(\ref{imagetheoremsigma})]
From Gromov's h-principle \cite[Theorem 4.5.1]{Gromov1969} (which holds for manifolds each of whose connected components is noncompact) we know that there exists a metric $g_0\in \Metr(M)$ with positive scalar curvature. Clearly, $\Yam_M(g_0)\geq 0$. Hence $0\leq\sigma(M)\leq\sigma(S^n)$.

\smallskip
We choose an embedded open $n$-ball $B$ in $M$. For any $m\in\N_{\geq3}$, Lemma \ref{balls} gives us a metric $g_1$ on $M$ which coincides with $g_0$ outside a compact subset $K$ of $B$ and satisfies $\Yam_B(g_1)\leq -m$, hence also $\Yam_M(g_1)\leq-m$ by Fact \ref{increasing}. We consider the path $g\colon[0,1]\to\Metr(M)$ from $g_0$ to $g_1$ given by $g(t)\define(1-t)g_0+tg_1$. This path is continuous with respect to the fine $C^2$-topology on $\Metr(M)$: for every $t_0\in[0,1]$ and every neighborhood $U$ of $\graph(j^2(g(t_0)))$ in $J^2\Sym^2_+T^\ast M$, the set of $t\in[0,1]$ with $\graph(j^2(g(t)))\subseteq U$ is open in $[0,1]$ because all $g(t)$ coincide outside the compact subset $K$ of $M$.

\smallskip
According to Theorem \ref{finecontinuity}, the map $[0,1]\to\R\cup\set{-\infty}$ (which actually takes only values in $\R$) given by $t\mapsto\Yam_M(g(t))$ is therefore continuous, being a composition of continuous maps. Thus $\cci{-m}{\Yam_M(g_0)}$ is contained in the image of $\Yam_M$. Since this holds for every $m$, the interval $\oci{-\infty}{\Yam_M(g_0)}$ is contained in the image of $\Yam_M$.

\smallskip
It remains to show that there is also a metric $h\in\Metr(M)$ with $\Yam_M(h)=-\infty$. We choose a compact exhaustion $(K_i)_{i\in\N}$ of $M$ and a sequence of open balls $B_i\subset K_{i+1}\without K_i$; this is possible: since $M$ is noncompact, each $K_{i+1}\without K_i$ has nonempty interior. Lemma \ref{balls} yields for each $i\in\N$ a metric $h_i$ on $B_i$ which coincides outside a compact subset of $B_i$ with $g_0$ and satisfies $\Yam_{B_i}(h_i)\leq -i$. We define $h\in\Metr(M)$ by $h\restrict_{B_i}=h_i$ for every $i\in\N$, and $h=g_0$ on $M\without\bigcup_{i\in\N}B_i$. By Fact \ref{increasing}, we have $\Yam_M(h) \leq \Yam_{B_i}(h) = \Yam_{B_i}(h_i) \leq -i$ for all $i$; hence $\Yam_M(h)=-\infty$.
\end{proof}

\begin{proof}[Proof of Theorem \ref{imagetheorem}(\ref{imagetheorempositive})]
If the $n$-manifold $M$, each of whose connected components is noncompact, is diffeomorphic to an open subset of a closed $n$-manifold $\bar{M}$, then there exists an embedding $\iota\colon M\to\bar{M}$ such that for each connected component $C$ of $\bar{M}$ the set $C\without\iota(M)$ has nonempty interior. (Let $C$ be a connected component of $\bar{M}$ such that $M_C\define C\cap\iota(M)$ is nonempty. There exists a smooth embedding $\gamma\colon\cci{0}{1}\to C$ with $\gamma^{-1}(M_C)=\coi{0}{1}$. Moreover, there is a closed tubular neighborhood $A$ in $M_C$ of the image of $\gamma\restrict_{\coi{0}{1}}$ such that $M_C\without A$ is diffeomorphic to $M_C$. Taking $\iota\restrict_{\iota^{-1}(C)}$ to be the inclusion $\iota^{-1}(C)\cong M_C\cong M_C\without A\to C$ for each $C$, we obtain an embedding $\iota$ with the claimed property.) We choose such an embedding and identify $M$ with $\iota(M)$.

\smallskip
We extend the constant function $a_n$ on $M$ to a function $s\in C^\infty(\bar{M},\R)$ which is somewhere negative on each connected component of $\bar{M}$; this is possible because $C\without M$ has nonempty interior for each connected component $C$ of $\bar{M}$. By \cite[Theorem 1.1]{KazdanWarner}, $\bar{M}$ admits a Riemannian metric $\bar{g}$ with scalar curvature $s$. The metric $g\define\iota^\ast\bar{g}$ on $M$ has constant scalar curvature $a_n$.

\smallskip
Let $p=p_n$. By the Sobolev embedding theorem on $(\bar{M},\bar{g})$, there is a constant $c\in\R_{>0}$ such that $\norm{u}_{L^p(\bar{g})} \leq c\norm{u}_{H^{1,2}(\bar{g})}$ holds for all $u\in C^\infty(\bar{M},\R)$. Every test function $v\in C^\infty_c(M,\R_{\geq0})\without\set{0}$ can be extended by $0$ to a function $\bar{v}\in C^\infty(\bar{M},\R_{\geq0})\without\set{0}$ and thus satisfies
\[ \begin{split}
\norm{v}_{L^p(g)}^2 &= \norm{\bar{v}}_{L^p(\bar{g})}^2 \leq c^2\norm{\bar{v}}_{H^{1,2}(\bar{g})}^2 = c^2\norm{v}_{H^{1,2}(g)}^2
= \frac{c^2}{a_n}\int_M\Big(a_n\abs{\diff v}_g^2 +a_nv^2\Big)\diff\mu_g\\
&= \frac{c^2}{a_n}\Yamfunc_g(v)\norm{v}_{L^p(g)}^2 .
\end{split} \]
This yields $\Yamfunc_g(v)\geq a_n/c^2$ for all test functions $v$, hence $\Yam_M(g)\geq a_n/c^2 >0$ and $\sigma(M)>0$.
\end{proof}

\section{The compact-open discontinuity of the Yamabe map: proof of Theorem \ref{cocontinuity}}

\begin{proof}[Proof of Theorem \ref{cocontinuity}]
For the compact-open $C^2$-topology, upper semicontinuity of $\Yam_M$ and continuity at metrics $g$ with $\Yam_M(g)=-\infty$ have been proved in Lemma \ref{coupper}. If $M$ is compact, then the fine $C^2$-topology coincides with the compact-open $C^2$-topology, so Theorem \ref{finecontinuity} yields the compact-open $C^2$-continuity; of course this continuity was already known from \cite[Proposition 4.31]{Besse}. It remains to show that if $M$ is noncompact, then at each metric $g\in\Metr(M)$ with $\Yam_M(g)>-\infty$ the Yamabe map $\Yam_M$ is not (lower semi)continuous with respect to the compact-open $C^\infty$-topology (and hence neither with respect to any other compact-open $C^k$-topology).

\smallskip
Theorem \ref{imagetheorem}(\ref{imagetheoremsigma}) says that $M$ admits a metric $g_{-\infty}$ with $\Yam_M(g_{-\infty})=-\infty$. We choose a compact exhaustion $(K_i)_{i\geq0}$ of $M$ and define $(g_i)_{i\geq0}$ in $\Metr(M)$ by
\[
g_i \define
\begin{cases} g &\text{on $K_i$}\\
g_{-\infty} &\text{on $M\without K_{i+1}$}\\
\end{cases},
\]
and on $K_{i+1}\without K_i$ in an arbitrary way such that $g_i$ becomes a smooth metric on $M$. Then $(g_i)_{i\geq0}$ converges to $g$ in the compact-open $C^\infty$-topology: since every compact subset $K$ of $M$ is contained in some $K_j$, we have $\norm{g_i-g}_{C^r(K;g)} \leq \norm{g_i-g}_{C^r(K_j;g)} = 0$ for $i\geq j$ and all $r\in\N$.

\smallskip
By Theorem \ref{gtheorem}(\ref{gtheoreminftyb}), $\Yam_M(g_{-\infty})=-\infty$ implies $\lim_{i\to\infty}\Yam_{M\without K_i}(g_{-\infty}) = -\infty$. Since the sequence $\big(\Yam_{M\without K_i}(g_{-\infty})\big)_{i\geq0}$ is monotonically increasing, it must be constant $-\infty$.

\smallskip
Hence $\Yam_M(g_i) \leq \Yam_{M\without K_{i+1}}(g_i) = \Yam_{M\without K_{i+1}}(g_{-\infty}) = -\infty$ for all $i\geq0$. But the limit metric $g$ satisfies $\Yam_M(g)>-\infty$. This shows that $\Yam_M$ is not compact-open $C^\infty$-continuous at $g$.
\end{proof}

\section{Preparations for the uniform continuity proof} \label{uniformprep}

\begin{lemma} \label{uniformlemma}
Let $M$ be a manifold, let $\eps\in\R_{>0}$. Every $g\in\Metr(M)$ has a neighborhood $\mathcal{U}$ with respect to the uniform $C^2$-topology such that the following properties hold for all $h\in\mathcal{U}$:
\begin{enumerate}
\item\label{unione} $\forall \alpha\in T^\ast M\colon \bigabs{\abs{\alpha}_h^2 -\abs{\alpha}_g^2} \leq \eps\abs{\alpha}_g^2$.
\item\label{unitwo} $\Bigabs{\F{h}{g}-1} \leq \eps$.
\item\label{unithree} $\abs{\scal_h-\scal_g} \leq \tfrac{\eps}{2}\big(1+\abs{\Ric_g}_g\big)$.
\end{enumerate}
\end{lemma}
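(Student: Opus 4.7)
The plan is to set $\mathcal{U} \define \mathcal{N}_{g,\delta,2}$ for a suitable $\delta = \delta(\eps,n) \in \ooi{0}{1/2}$ and to verify the three conditions pointwise from the bound $\norm{h-g}_{C^2(g)} < \delta$; all constants $C_i$ below will be purely dimensional.

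For (\ref{unione}), I would write $\abs{\alpha}_h^2 - \abs{\alpha}_g^2 = (h^{-1}-g^{-1})(\alpha,\alpha)$, where $h^{-1},g^{-1}$ denote the induced metrics on $T^\ast M$, and apply the resolvent identity $h^{-1}-g^{-1} = h^{-1}(g-h)g^{-1}$. Since $\delta \le 1/2$ forces $g^{-1}h$ to be close to the identity in $g$-operator norm (in particular $\abs{h^{-1}}_g \le C_0$), this gives $\abs{h^{-1}-g^{-1}}_g \le C_1 \delta$ and hence (\ref{unione}) as soon as $\delta \le \eps/C_1$. For (\ref{unitwo}), the same closeness of $g^{-1}h$ to the identity puts all of its eigenvalues in $[1-c\delta,1+c\delta]$, and then $\F{h}{g} = \sqrt{\det(g^{-1}h)}$ yields $\bigabs{\F{h}{g}-1} \le C_2 \delta$.

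The main obstacle is (\ref{unithree}), where the factor $1+\abs{\Ric_g}_g$ on the right-hand side dictates the shape of the argument. I would introduce $k \define h-g$ and the difference-of-Christoffels tensor $T \define \Gamma(h) - \Gamma(g)$, which is algebraically of the form $T = \tfrac{1}{2} h^{-1}\cdot\nabla^g k$ (schematically, with $\nabla^g$ the Levi-Civita connection of $g$). The standard curvature identity then expresses $\Ric_h - \Ric_g$ as a contraction of $\nabla^g T$ and $T\otimes T$; the crucial point is that $\Ric_g$ itself does not appear on the right-hand side of this identity. Taking the trace with $h^{-1}$ rather than $g^{-1}$ yields the decomposition
\[
\scal_h - \scal_g = (h^{-1}-g^{-1})(\Ric_g) + h^{-1}\bigl(\Ric_h - \Ric_g\bigr) .
\]
By (\ref{unione}), the first summand is bounded by $C_1 \delta \, \abs{\Ric_g}_g$. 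For the second, the estimates $\abs{T}_g \le C\delta$, \,$\abs{\nabla^g h^{-1}}_g = \abs{h^{-1}(\nabla^g k) h^{-1}}_g \le C\delta$, \,$\abs{\nabla^g T}_g \le C(\delta+\delta^2)$, together with $\abs{h^{-1}}_g \le C_0$, combine to bound it by $C\delta$. Altogether $\bigabs{\scal_h - \scal_g} \le C_3 \delta (1+\abs{\Ric_g}_g)$, and $\delta \le \eps/(2C_3)$ completes the verification.
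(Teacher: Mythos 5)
Your argument is correct, and for the crucial part (\ref{unithree}) it takes a genuinely cleaner route than the paper, so a comparison is worthwhile. For (\ref{unione}) and (\ref{unitwo}) the paper diagonalizes $h$ against $g$ pointwise via the spectral theorem and invokes soft continuity of $t\mapsto 1/t$ and of $(t_1,\dots,t_n)\mapsto\prod_i\sqrt{t_i}$; your resolvent identity and $\F{h}{g}=\sqrt{\det(g^{-1}h)}$ give the same conclusions quantitatively, and the difference is cosmetic. For (\ref{unithree}), both proofs rest on the same underlying decomposition: the only way $\Ric_g$ can enter is through the re-tracing term $(h^{-1}-g^{-1})(\Ric_g)$, which is $O(\delta\,\abs{\Ric_g}_g)$ (in the paper this appears as the term $\sum_a\frac{\abs{1-h_a}}{h_a}\bigabs{\Ric_g(e_a,e_a)}$), while the remaining contribution is a universal polynomial in $h^{-1}$, $\nabla^g(h-g)$ and $\nabla^g\nabla^g(h-g)$ in which no curvature of $g$ survives, hence is $O(\delta)$ with purely dimensional constants. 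The paper establishes this second fact by an explicit computation in $g$-normal coordinates, estimating $\partial_d\Tamma_{ab}^c-\partial_d\Gamma_{ab}^c$ and the quadratic Christoffel terms by hand --- precisely the ``slightly tedious elementary arguments'' the authors apologize for in the remark following the lemma. You package the same fact invariantly via the difference tensor $T=\Gamma(h)-\Gamma(g)=\tfrac12 h^{-1}\cdot\nabla^g(h-g)$ and the standard comparison identity expressing $R_h-R_g$ as a contraction of $\nabla^g T$ and $T\otimes T$; this avoids normal coordinates entirely and makes transparent \emph{why} no derivative of $\Gamma(g)$ (i.e.\ no curvature of $g$) appears outside the re-tracing term. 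The only caveat is that your appeal to ``(1)'' for bounding $(h^{-1}-g^{-1})(\Ric_g)$ really uses the stronger operator-norm bound $\abs{h^{-1}-g^{-1}}_g\leq C_1\delta$ established en route to (1), not the statement of (1) itself applied to covectors --- but that bound is exactly what your resolvent identity delivers, so nothing is missing.
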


\noindent\emph{Remark.} The occurrence of $\abs{\Ric_g}_g$ on the right-hand side of \eqref{unithree} is not surprising, because the linearization of $g\mapsto\scal_g$ \cite[Theorem 1.174(e)]{Besse} involves the Ricci tensor. But in order to prove the lemma, also the remainder term in the Taylor expansion has to be estimated on arbitrary noncompact manifolds. Therefore it is hard to avoid the slightly tedious elementary arguments in the following proof.

\begin{proof}
Since $\R_{>0}\ni t\mapsto\frac{1}{t}$ is continuous, there exists a $\delta\in\R_{>0}$ such that $\bigabs{\frac{1}{t}-1}\leq\eps$ holds for all $t\in\R_{>0}$ with $\abs{t-1}\leq\delta$. We claim that every $h\in\mathcal{N}_{g,\delta,0}$ satisfies \eqref{unione}. In order to prove that, we consider $h\in\mathcal{N}_{g,\delta,0}$ and $x\in M$ and $\alpha\in T_x^\ast M$. The spectral theorem yields a $g$-orthonormal basis $(e_1,\dots,e_n)$ of $T_xM$ which is also $h$-orthogonal; thus there are $h_1,\dots,h_n\in\R_{>0}$ with $\forall i,j\colon h(e_i,e_j)=h_i\delta_{ij}$. The condition $h\in\mathcal{N}_{g,\delta,0}$ implies $\abs{h-g}_g(x)\leq\delta$, i.e.\ $\delta^2 \geq \sum_{i,j=1}^n(h(e_i,e_j)-g(e_i,e_j))^2 = \sum_{i=1}^n(h_i-1)^2$. In particular $\forall i\colon \abs{h_i-1}\leq\delta$, hence $\forall i\colon \bigabs{\frac{1}{h_i}-1}\leq\eps$. For the numbers $\alpha_i\define\alpha(e_i)$, we compute (using that $\big(h_1^{-1/2}e_1,\dots,h_n^{-1/2}e_n\big)$ is an $h$-orthonormal basis of $T_xM$):
\[
\Bigabs{\abs{\alpha}_h^2 -\abs{\alpha}_g^2}
= \Biggabs{\sum_{i=1}^n\frac{1}{h_i}\alpha_i^2 -\sum_{i=1}^n\alpha_i^2}
\leq \sum_{i=1}^n\biggabs{\frac{1}{h_i} -1}\alpha_i^2
\leq \eps\abs{\alpha}_g^2 .
\]
This proves our claim; in particular, every element $h$ of $\mathcal{U}_1\define \mathcal{N}_{g,\delta,2} \subseteq \mathcal{N}_{g,\delta,0}$ satisfies \eqref{unione}.

\smallskip
Since $(\R_{>0})^n\ni(t_1,\dots,t_n)\mapsto\prod_{i=1}^n\sqrt{t_i}$ is continuous, there exists a number $\delta_2\in\R_{>0}$ such that $\bigabs{\prod_{i=1}^n\sqrt{t_i} -1} \leq\eps$ holds for all $(t_1,\dots,t_n)\in(\R_{>0})^n$ with $\forall i\colon\abs{t_i-1} \leq\delta_2$. We claim that every $h\in\mathcal{N}_{g,\delta_2,0}$ satisfies \eqref{unitwo}. In order to prove that, we consider $h\in\mathcal{U}_2$ and $x\in M$ and again a $g$-orthonormal basis $(e_1,\dots,e_n)$ of $T_xM$ which is also $h$-orthogonal, and we define $h_1,\dots,h_n$ as before. We obtain
\[
\Bigabs{\F{h}{g}-1}(x) = \biggabs{\frac{\diff\mu_h(e_1,\dots,e_n)}{\diff\mu_g(e_1,\dots,e_n)} -1}
= \Biggabs{\frac{\diff\mu_h\big(h_1^{-1/2}e_1,\dots,h_n^{-1/2}e_n\big)}{\diff\mu_g(e_1,\dots,e_n)} \prod_{i=1}^nh_i^{1/2} -1}
= \Biggabs{\prod_{i=1}^nh_i^{1/2} -1} .
\]
Since $\forall i\colon \abs{h_i-1}\leq\delta_2$ holds by the same argument as above, we get $\bigabs{\F{h}{g}-1}(x) \leq \eps$. This is true for every $x\in M$, which proves our claim; in particular, every $h\in\mathcal{U}_2\define \mathcal{N}_{g,\delta_2,2} \subseteq \mathcal{N}_{g,\delta_2,0}$ satisfies \eqref{unitwo}.

\smallskip
There exists a (small) number $\delta_3\in\ooi{0}{1}$ with
\begin{align*}
\frac{n\delta_3}{1-\delta_3} &\leq \frac{\eps}{2} ,
&\frac{2n^2}{1-\delta_3}\bigg(\frac{3n\delta_3^2}{2(1-\delta_3)^2} +\frac{3\delta_3}{2(1-\delta_3)}\bigg)
+\frac{2n^3}{1-\delta_3}\bigg(\frac{3\delta_3}{2(1-\delta_3)}\bigg)^2 &\leq \frac{\eps}{2} .
\end{align*}
Let $\mathcal{U}_3\define\mathcal{N}_{g,\delta_3,2}$. We claim that \eqref{unithree} holds for every $h\in\mathcal{U}_3$.

\smallskip
Let $x\in M$. We choose a basis $(e_1,\dots,e_n)$ of $T_xM$ with the same properties as before and define $h_1,\dots,h_n$ in the same way. Existence of normal coordinates at $x$ \comment{***\cite[2.89]{GHL} ***}tells us that there are local coordinates $(x_1,\dots,x_n)$ around $x$ such that the corresponding coordinate vector fields $\partial_1,\dots,\partial_n$ satisfy at $x$ the equations $\partial_i(x)=e_i$ and $\Gamma_{ij}^k(x)=0$ for all $i,j,k\in\set{1,\dots,n}$, where $\Gamma_{ij}^k$ are the Christoffel symbols of the metric $g$ with respect to the local coordinates. As usual, $g^{ij}$ and $h^{ij}$ denote the elements of the inverses of the matrix-valued functions $(g_{ij})_{i,j=1\dots n}$ and $(h_{ij})_{i,j=1\dots n}$ given by $g_{ij}=g(\partial_i,\partial_j)$ and $h_{ij}=h(\partial_i,\partial_j)$. At $x$, they satisfy $g^{ij}(x) = \delta_{ij}$ and $h^{ij}(x)=\frac{1}{h_i}\delta_{ij}$. Let $\Tamma_{ij}^k$ denote the Christoffel symbols of $h$ with respect to our local coordinates. In the following, all sums run from $1$ to $n$.

\smallskip
From $h\in\mathcal{N}_{g,\delta_3,2}$, we deduce $\forall i\colon \delta_3\geq\abs{h_i-1}$ as before; moreover, with $\nabla$ denoting the Levi-Civita connection of $g$ and using $\nabla g=0$,
\begin{align*}
\delta_3^2 &\geq \abs{\nabla h}^2_g(x)
= \sum_{i,j,k}\Big((\nabla_{\partial_i}h)(\partial_j,\partial_k)\Big)^2(x)
= \sum_{i,j,k}\bigg(\partial_ih_{jk} -\sum_l\Gamma_{ij}^lh_{lk} -\sum_l\Gamma_{ik}^lh_{lj}\bigg)^2(x)\\
&= \sum_{i,j,k}(\partial_ih_{jk})^2(x)
\end{align*}
and
\[ \begin{split}
\delta_3^2 &\geq \abs{\nabla\nabla h}^2_g(x)\\
&= \sum_{i,j,k,l}\bigg(\partial_l(\nabla_{\partial_i}h)(\partial_j,\partial_k)\\[-1ex]
&\mspace{70mu}-\sum_m\bigg(\Gamma_{li}^m(\nabla_{\partial_m}h)(\partial_j,\partial_k) +\Gamma_{lj}^m(\nabla_{\partial_i}h)(\partial_m,\partial_k) +\Gamma_{lk}^m(\nabla_{\partial_i}h)(\partial_j,\partial_m)\bigg)\bigg)^2(x)\\
&= \sum_{i,j,k,l}\bigg(\partial_l\bigg(\partial_ih_{jk} -\sum_m\Gamma_{ij}^mh_{mk} -\sum_m\Gamma_{ik}^mh_{mj}\bigg)\bigg)^2(x)\\
&= \sum_{i,j,k,l}\bigg(\partial_l\partial_ih_{jk} -\sum_m(\partial_l\Gamma_{ij}^m)h_{mk} -\sum_m(\partial_l\Gamma_{ik}^m)h_{mj}\bigg)^2(x) .
\end{split} \]
Hence we obtain for all $i,j,k,l\in\set{1,\dots,n}$:
\begin{equation} \label{estimates} \begin{split}
\delta_3 &\geq \abs{h_i-1} ,\\
\delta_3 &\geq \abs{\partial_ih_{jk}}(x) ,\\
\delta_3 &\geq \Bigabs{ \partial_l\partial_ih_{jk} -(\partial_l\Gamma_{ij}^k)h_k -(\partial_l\Gamma_{ik}^j)h_j }(x) .
\end{split} \end{equation}

Recall that the Christoffel symbols are given by $\Tamma_{ab}^c = \frac{1}{2}\sum_mh^{cm}(\partial_ah_{bm} +\partial_bh_{am} -\partial_mh_{ab})$. Since every function $A\in C^\infty(\R^n,\GL(n))$ satisfies $\partial_i(A^{-1}) = -A^{-1}(\partial_iA)A^{-1}$, we get
\begin{equation} \label{Tamma} \begin{split}
(\partial_d\Tamma_{ab}^c)(x)
&= -\sum_m\bigg(\frac{\partial_dh_{cm}}{2h_ch_m}\Big(\partial_ah_{bm} +\partial_bh_{am} -\partial_mh_{ab}\Big)\bigg)(x)\\
&\mspace{20mu}+\frac{1}{2h_c}\Big(\partial_d\partial_ah_{bc} +\partial_d\partial_bh_{ac} -\partial_d\partial_ch_{ab}\Big)(x) .
\end{split} \end{equation}
Using the symmetry $\Gamma_{ij}^k=\Gamma_{ji}^k$, we obtain from \eqref{estimates}:
\begin{align*}
&\Bigabs{ \partial_d\partial_ah_{bc} +\partial_d\partial_bh_{ac} -\partial_d\partial_ch_{ab} -2h_c\partial_d\Gamma_{ab}^c }(x)\\
&\mspace{50mu}= \Big|
\big(\partial_d\partial_ah_{bc} -h_c\partial_d\Gamma_{ab}^c -h_b\partial_d\Gamma_{ac}^b \big)\\
&\mspace{50mu}\mspace{25mu}+\big(\partial_d\partial_bh_{ac} -h_c\partial_d\Gamma_{ba}^c -h_a\partial_d\Gamma_{bc}^a\big)
-\big(\partial_d\partial_ch_{ab} -h_b\partial_d\Gamma_{ca}^b -h_a\partial_d\Gamma_{cb}^a\big) \Big|(x)\\
&\mspace{50mu}\leq 3\delta_3 .
\end{align*}
Together with \eqref{Tamma} and \eqref{estimates}, this yields
\begin{align*}
\Bigabs{\partial_d\Tamma_{ab}^c -\partial_d\Gamma_{ab}^c}(x)
&\leq \sum_m\biggabs{\frac{\partial_dh_{cm}}{2h_ch_m}\Big(\partial_ah_{bm} +\partial_bh_{am} -\partial_mh_{ab}\Big)}(x)\\
&\mspace{20mu} +\biggabs{ \frac{\partial_d\partial_ah_{bc} +\partial_d\partial_bh_{ac} -\partial_d\partial_ch_{ab} -2h_c\partial_d\Gamma_{ab}^c}{2h_c} }(x)\\
&\leq \sum_m\frac{3\delta_3^2}{2h_ch_m} +\frac{3\delta_3}{2h_c}\\
&\leq \frac{3n\delta_3^2}{2(1-\delta_3)^2} +\frac{3\delta_3}{2(1-\delta_3)} .
\end{align*}
The well-known local coordinate formula for scalar curvature tells us that
\[
\scal_g(x) = \sum_{a,b,c}g^{ab}\bigg(\partial_c\Gamma_{ab}^c -\partial_b\Gamma_{ac}^c +\sum_d\Gamma_{ab}^d\Gamma_{cd}^c -\sum_d\Gamma_{ac}^d\Gamma_{bd}^c\bigg)(x)
= \sum_{a,c}\big(\partial_c\Gamma_{aa}^c -\partial_a\Gamma_{ac}^c\big)(x)
\]
and
\[ \begin{split}
\scal_h(x) &= \sum_{a,b,c}h^{ab}\bigg(\partial_c\Tamma_{ab}^c -\partial_b\Tamma_{ac}^c +\sum_d\Tamma_{ab}^d\Tamma_{cd}^c -\sum_d\Tamma_{ac}^d\Tamma_{bd}^c\bigg)(x)\\
&= \sum_{a,c}\frac{\partial_c\Tamma_{aa}^c -\partial_a\Tamma_{ac}^c +\sum_d\Tamma_{aa}^d\Tamma_{cd}^c -\sum_d\Tamma_{ac}^d\Tamma_{ad}^c}{h_a}(x) .
\end{split} \]
The local coordinate formula for $\Ric_g$ yields for each $a\in\set{1,\dots,n}$:
\[ \begin{split}
\abs{\Ric_g}_g &\geq \abs{\Ric_g(e_a,e_a)}
= \biggabs{ \sum_c\bigg(\partial_c\Gamma_{aa}^c -\partial_a\Gamma_{ac}^c +\sum_d\Gamma_{aa}^d\Gamma_{cd}^c -\sum_d\Gamma_{ac}^d\Gamma_{ad}^c\bigg) }(x)\\
&= \biggabs{ \sum_c\big(\partial_c\Gamma_{aa}^c -\partial_a\Gamma_{ac}^c\big) }(x) \,.
\end{split} \]
Using the estimate (which follows from \eqref{estimates})
\[ \begin{split}
\bigabs{\Tamma_{ab}^c}(x)
&\leq \frac{1}{2}\sum_m\Bigabs{ h^{cm}(\partial_ah_{bm} +\partial_bh_{am} -\partial_mh_{ab}) }(x)
\leq \frac{3\delta_3}{2(1-\delta_3)} ,
\end{split} \]
we obtain finally:
\[ \begin{split}
\bigabs{\scal_h-\scal_g}(x)
&= \Biggabs{ \sum_{a,c}\frac{\partial_c\Tamma_{aa}^c -\partial_a\Tamma_{ac}^c +\sum_d\Tamma_{aa}^d\Tamma_{cd}^c -\sum_d\Tamma_{ac}^d\Tamma_{ad}^c
-h_a\big(\partial_c\Gamma_{aa}^c -\partial_a\Gamma_{ac}^c\big)}{h_a} }(x)\\
&\leq \sum_{a,c}\Biggabs{ \frac{\big(\partial_c\Tamma_{aa}^c -\partial_c\Gamma_{aa}^c\big) -(\partial_a\Tamma_{ac}^c -\partial_a\Gamma_{ac}^c)}{h_a} }(x)\\
&\mspace{20mu}+\sum_{a,c,d}\Biggabs{ \frac{\Tamma_{aa}^d\Tamma_{cd}^c -\Tamma_{ac}^d\Tamma_{ad}^c }{h_a} }(x)
+\sum_a\frac{\abs{1-h_a}}{h_a}\Biggabs{\sum_c\big(\partial_c\Gamma_{aa}^c -\partial_a\Gamma_{ac}^c\big) }(x)\\
&\leq \frac{2n^2}{1-\delta_3}\bigg(\frac{3n\delta_3^2}{2(1-\delta_3)^2} +\frac{3\delta_3}{2(1-\delta_3)}\bigg)
+\frac{2n^3}{1-\delta_3}\bigg(\frac{3\delta_3}{2(1-\delta_3)}\bigg)^2
+\frac{n\delta_3}{1-\delta_3}\abs{\Ric_g}_g(x)\\
&\leq \frac{\eps}{2}\big(1+\abs{\Ric_g}_g(x)\big) .
\end{split} \]
This is true for every $x\in M$, which proves our claim.

\smallskip
The uniform $C^2$-neighborhood $\mathcal{U}\define \mathcal{U}_1\cap\mathcal{U}_2\cap\mathcal{U}_3$ of $g$ has the desired property.
\end{proof}

\begin{corollary} \label{uniformlemmamod}
Let $M$ be a manifold, let $\eps\in\R_{>0}$. If $g\in\Metr(M)$ admits a constant $c\in\R_{>0}$ with $\abs{\Ric_g}_g\leq c(1+\abs{\scal}_g)$, then it has a neighborhood $\mathcal{U}$ with respect to the uniform $C^2$-topology such that the following properties hold for all $h\in\mathcal{U}$:
\begin{enumerate}
\item $\forall \alpha\in T^\ast M\colon \bigabs{\abs{\alpha}_h^2 -\abs{\alpha}_g^2} \leq \eps\abs{\alpha}_g^2$.
\item $\Bigabs{\F{h}{g}-1} \leq \eps$.
\item $\abs{\scal_h-\scal_g} \leq \tfrac{\eps}{2}\big(1+\abs{\scal_g}\big)$.
\end{enumerate}
\end{corollary}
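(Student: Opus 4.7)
The plan is to reduce the corollary to Lemma \ref{uniformlemma} by using the curvature hypothesis to replace $\abs{\Ric_g}_g$ by $\abs{\scal_g}$ on the right-hand side of property~(3), at the cost of a multiplicative constant depending on $c$.

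More precisely, I would set $\eps'\define\eps/(1+c)\in\ooi{0}{\eps}$ and apply Lemma \ref{uniformlemma} to $g$ with this $\eps'$ in place of $\eps$, obtaining a uniform $C^2$-neighborhood $\mathcal{U}$ of $g$ such that every $h\in\mathcal{U}$ satisfies
\[
\bigabs{\abs{\alpha}_h^2 -\abs{\alpha}_g^2} \leq \eps'\abs{\alpha}_g^2 , \qquad
\Bigabs{\F{h}{g}-1} \leq \eps' , \qquad
\abs{\scal_h-\scal_g} \leq \tfrac{\eps'}{2}\bigl(1+\abs{\Ric_g}_g\bigr) .
\]
Since $\eps'\leq\eps$, properties (1) and (2) of the corollary follow immediately from the first two inequalities. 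For property (3), I would insert the hypothesis $\abs{\Ric_g}_g\leq c(1+\abs{\scal_g})$ and estimate
\[
\tfrac{\eps'}{2}\bigl(1+\abs{\Ric_g}_g\bigr) \leq \tfrac{\eps'}{2}\bigl(1+c+c\abs{\scal_g}\bigr) \leq \tfrac{\eps'(1+c)}{2}\bigl(1+\abs{\scal_g}\bigr) = \tfrac{\eps}{2}\bigl(1+\abs{\scal_g}\bigr) ,
\]
where the second inequality uses $1+c+c\abs{\scal_g} \leq (1+c)(1+\abs{\scal_g})$, which holds because $c\leq 1+c$.

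There is essentially no obstacle: the corollary is a bookkeeping consequence of Lemma \ref{uniformlemma}. The only thing to watch is the normalisation constant: one needs the factor $(1+c)$ (not just $c$) to absorb both the additive ``$1$'' and the ``$c\abs{\scal_g}$'' term in a single bound of the form $K(1+\abs{\scal_g})$. Everything else is a direct substitution.
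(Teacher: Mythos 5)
Your proof is correct and is essentially identical to the paper's: both apply Lemma \ref{uniformlemma} with $\eps/(1+c)$ in place of $\eps$ and then absorb the hypothesis $\abs{\Ric_g}_g\leq c(1+\abs{\scal_g})$ via $1+c+c\abs{\scal_g}\leq(1+c)(1+\abs{\scal_g})$. Nothing to add.
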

\begin{proof}
We apply Lemma \ref{uniformlemma} to $\tilde{\eps}\define\frac{\eps}{1+c}$ instead of $\eps$. Let $h$ be an element of the resulting uniform $C^2$-neighborhood $\mathcal{U}$ of $g$. Since $\tilde{\eps}\leq\eps$, we obtain our properties (1), (2) from the properties (1), (2) of \ref{uniformlemma}. Moreover, $\abs{\scal_h-\scal_g} \leq \frac{\tilde{\eps}}{2}\big(1+\abs{\Ric_g}_g\big) \leq \frac{\tilde{\eps}}{2}\big(1+c+c\abs{\scal_g}\big) \leq \tfrac{\eps}{2}\big(1+\abs{\scal_g}\big)$.
\end{proof}

\section{Continuity with respect to the uniform topology: proof of Theorem \ref{uniformcontinuity}}

\begin{proof}[Proof of Theorem \ref{uniformcontinuity}]
Upper semicontinuity and continuity at metrics with Yamabe constant $-\infty$ follow from Corollary \ref{upper}. It remains to prove lower semicontinuity at each metric $g\in\Metr(M)$ with $\Yam_M(g)>-\infty$ for which there exist constants $\delta,c\in\R_{>0}$ with $\norm{(\scal_g-\delta)_-}_{L^{n/2}(g)} < \infty$ and $\abs{\Ric_g}_g\leq c(1+\abs{\scal_g})$.

\smallskip
We start with the case $\delta=1$. Let $\eps_0\in\ooi{0}{1}$. Let $p=p_n$. There exists a (small) $\eps\in\ooi{0}{1}$ such that
\begin{equation*} \begin{split}
\tfrac{3}{2}\eps(7-\eps)(1-\eps)^{-2/p}\,\bignorm{(\scal_g-1)_-}_{L^{n/2}(g)} &\leq \eps_0 \\
\text{and}\quad \bigg(1-\frac{(1-\eps)^2}{(1+\eps)^{2/p}}\bigg)\abs{\Yam_M(g)} &\leq \eps_0 .
\end{split} \end{equation*}
Let
\begin{align*}
A &\define \Bigset{x\in M \Bigsuchthat \tfrac{\eps}{2}\big(1-\tfrac{\eps}{2}\big)^{-1} \leq \scal_g(x)} ,\\
B &\define \Bigset{x\in M \Bigsuchthat 0 < \scal_g(x) < \tfrac{\eps}{2}\big(1-\tfrac{\eps}{2}\big)^{-1}} ,\\
C &\define \Bigset{x\in M \Bigsuchthat \scal_g(x) \leq 0} .
\end{align*}

\smallskip
We choose a uniform $C^2$-neighborhood of $\mathcal{U}$ with the properties stated in Corollary \ref{uniformlemmamod}. For every $h\in\mathcal{U}$ and $v\in C^\infty_c(M,\R_{\geq0})$, we have
\[ \begin{split}
&-(1-\eps)^2\int_A\scal_g\,v^2\,\diff\mu_g +\int_A\Big(\scal_g-\tfrac{\eps}{2}\abs{\scal_g}-\tfrac{\eps}{2}\Big)\,v^2\,\F{h}{g}\,\diff\mu_g\\
&\geq -(1-\eps)^2\int_A\scal_g\,v^2\,\diff\mu_g +(1-\eps)\int_A\Big(\big(1-\tfrac{\eps}{2}\big)\scal_g-\tfrac{\eps}{2}\Big)\,v^2\,\diff\mu_g\\
&= \tfrac{\eps}{2}(1-\eps)\int_A(\scal_g-1)\,v^2\,\diff\mu_g\\
&\geq -\tfrac{\eps}{2}(1-\eps)\int_A(\scal_g-1)_-\,v^2\,\diff\mu_g
\;\geq\; -\tfrac{\eps}{2}(1-\eps)\,\bignorm{(\scal_g-1)_-}_{L^{n/2}(A;g)}\,\norm{v}_{L^p(A;g)}^2\\
&\geq -\tfrac{\eps}{2}(1-\eps)\,\bignorm{(\scal_g-1)_-}_{L^{n/2}(g)}\,\norm{v}_{L^p(g)}^2
\end{split} \]
and
\[ \begin{split}
&-(1-\eps)^2\int_B\scal_g\,v^2\,\diff\mu_g +\int_B\Big(\scal_g-\tfrac{\eps}{2}\abs{\scal_g}-\tfrac{\eps}{2}\Big)\,v^2\,\F{h}{g}\,\diff\mu_g\\
&\geq -(1-\eps)^2\int_B\scal_g\,v^2\,\diff\mu_g +(1+\eps)\int_B\Big(\big(1-\tfrac{\eps}{2}\big)\scal_g-\tfrac{\eps}{2}\Big)\,v^2\,\diff\mu_g\\
&\geq \tfrac{\eps}{2}(5-3\eps)\int_B\scal_g\,v^2\,\diff\mu_g -\tfrac{\eps}{2}(1+\eps)\int_Bv^2\,\diff\mu_g
\;\geq\; \tfrac{\eps}{2}(5-3\eps)\int_B(\scal_g-1)\,v^2\,\diff\mu_g\\
&\geq -\tfrac{\eps}{2}(5-3\eps)\,\bignorm{(\scal_g-1)_-}_{L^{n/2}(g)}\,\norm{v}_{L^p(g)}^2
\end{split} \]
and
\[ \begin{split}
&-(1-\eps)^2\int_C\scal_g\,v^2\,\diff\mu_g +\int_C\Big(\scal_g-\tfrac{\eps}{2}\abs{\scal_g}-\tfrac{\eps}{2}\Big)\,v^2\,\F{h}{g}\,\diff\mu_g\\
&\geq -(1-\eps)^2\int_C\scal_g\,v^2\,\diff\mu_g
+(1+\eps)\int_C\Big(\big(1+\tfrac{\eps}{2}\big)\scal_g -\tfrac{\eps}{2}\Big)\,v^2\,\diff\mu_g\\
&= \tfrac{\eps}{2}(7-\eps)\int_C\scal_g\,v^2\,\diff\mu_g -\tfrac{\eps}{2}(1+\eps)\int_Cv^2\,\diff\mu_g
\;\geq\; \tfrac{\eps}{2}(7-\eps)\int_C(\scal_g-1)\,v^2\,\diff\mu_g\\
&\geq -\tfrac{\eps}{2}(7-\eps)\,\bignorm{(\scal_g-1)_-}_{L^{n/2}(g)}\,\norm{v}_{L^p(g)}^2 \,;
\end{split} \]
hence, using $\int_Mv^p\,\diff\mu_g = \int_Mv^p\big(\F{h}{g}\big)^{-1}\diff\mu_h \leq (1-\eps)^{-1}\int_Mv^p\,\diff\mu_h$:
\[ \begin{split}
&-(1-\eps)^2\int_M\scal_g\,v^2\,\diff\mu_g +\int_M\Big(\scal_g-\tfrac{\eps}{2}\abs{\scal_g}-\tfrac{\eps}{2}\Big)\,v^2\,\F{h}{g}\,\diff\mu_g\\
&\geq -\tfrac{3}{2}\eps(7-\eps)\,\bignorm{(\scal_g-1)_-}_{L^{n/2}(g)}\,\norm{v}_{L^p(g)}^2\\
&\geq -\tfrac{3}{2}\eps(7-\eps)(1-\eps)^{-2/p}\,\bignorm{(\scal_g-1)_-}_{L^{n/2}(g)}\,\norm{v}_{L^p(h)}^2 \,.
\end{split} \]

We get for all $h\in\mathcal{U}$ and $v\in C_c^\infty(M,\R_{\geq0})$ with $\norm{v}_{L^p(h)}=1$:
\[ \begin{split}
\Yamfunc_h(v)
&= a_n\int_M\abs{\diff v}_h^2\,\F{h}{g}\,\diff\mu_g +\int_M\scal_h\,v^2\,\F{h}{g}\,\diff\mu_g\\
&\geq (1-\eps)^2\int_M a_n\,\abs{\diff v}_g^2\,\diff\mu_g +\int_M\Big(\scal_g-\tfrac{\eps}{2}\abs{\scal_g}-\tfrac{\eps}{2}\Big)\,v^2\,\F{h}{g}\,\diff\mu_g\\
&= (1-\eps)^2 \bigg(\Yamfunc_g(v)\,\norm{v}_{L^p(g)}^2 -\int_M\scal_g\,v^2\,\diff\mu_g \bigg)
+\int_M\Big(\scal_g-\tfrac{\eps}{2}\abs{\scal_g}-\tfrac{\eps}{2}\Big)\,v^2\,\F{h}{g}\,\diff\mu_g\\
&\geq (1-\eps)^2\,\Yam_M(g)\,\norm{v}_{L^p(g)}^2 -\tfrac{3}{2}\eps(7-\eps)(1-\eps)^{-2/p}\,\bignorm{(\scal_g-1)_-}_{L^{n/2}(g)} \,.
\end{split} \]

Since $(1+\eps)^{-2/p} = (1+\eps)^{-2/p}\,\norm{v}_{L^p(h)}^2 \leq \norm{v}_{L^p(g)}^2 \leq (1-\eps)^{-2/p}\,\norm{v}_{L^p(h)}^2 = (1-\eps)^{-2/p}$, we obtain in the case $\Yam_M(g)\geq0$:
\[
(1-\eps)^2\,\Yam_M(g)\,\norm{v}_{L^p(g)}^2
\geq \frac{(1-\eps)^2}{(1+\eps)^{2/p}}\,\Yam_M(g)
= \Yam_M(g) -\bigg(1-\frac{(1-\eps)^2}{(1+\eps)^{2/p}}\bigg)\abs{\Yam_M(g)}
\geq \Yam_M(g) -\eps_0 ;
\]
and in the case $\Yam_M(g)<0$:
\[
(1-\eps)^2\,\Yam_M(g)\,\norm{v}_{L^p(g)}^2
\geq \frac{(1-\eps)^2}{(1-\eps)^{2/p}}\,\Yam_M(g)
\geq \Yam_M(g) ,
\]
because $2-2/p>0$. This yields in each case:
\[ \begin{split}
\Yamfunc_h(v)
&\geq \Yam_M(g) -2\eps_0 \,,
\end{split} \]
hence $\Yam_M(h)\geq \Yam_M(g)-2\eps_0$. Since there exists for every $\eps_0\in\R_{>0}$ a uniform $C^2$-neighborhood $\mathcal{U}$ such that this holds for all $h\in\mathcal{U}$, the Yamabe map is indeed lower semicontinuous in the case $\delta=1$.

\smallskip
Now we consider an arbitrary $\delta\in\R_{>0}$. Because of our assumption on $g$, the metric $\bar{g} = \delta g$ satisfies $\abs{\Ric_{\bar{g}}}_{\bar{g}} = \frac{1}{\delta}\abs{\Ric_g}_g \leq \frac{c}{\delta}(1+\abs{\scal_g}) = \frac{c}{\delta}(1+\delta\abs{\scal_{\bar{g}}}) \leq \tilde{c}(1+\abs{\scal_{\bar{g}}})$ for $\tilde{c}\define c\max\bigset{1,\frac{1}{\delta}}$, and
\[
\int_M\big((\scal_{\bar{g}}-1)_-\big)^{n/2}\diff\mu_{\bar{g}}
= \int_M\Big(\tfrac{1}{\delta}(\scal_g-\delta)_-\Big)^{n/2}\,\delta^{n/2}\,\diff\mu_g
= \int_M\big((\scal_g-\delta)_-\big)^{n/2}\diff\mu_g < \infty .
\]
Thus the case we have proved already (applied to $\bar{g},\tilde{c}$ instead of $g,c$) yields lower semicontinuity of $\Yam_M$ at $\bar{g}$ and hence, by conformal invariance of $\Yam_M$, also at $g$.
\end{proof}

\section{Discontinuity with respect to the uniform topology: proof of Example \ref{uniformexample}}

\begin{proof}[Proof of \ref{uniformexample}]
Since $\sigma(N)>0$, there exists a metric $h\in\Metr(N)$ with $\Yam_N(h)\geq0$. Like every nonempty closed manifold of dimension $\geq3$, $N$ admits a metric $h'$ with $\Yam_N(h')<0$. We choose a smooth path $(h_t)_{t\in[0,1]}$ in $\Metr(N)$ with $h_0=h'$ and $h_1=h$. Let $t_0\define\min\set{t\in[0,1] \suchthat \Yam_N(h_t)=0}$ (the minimum exists because $\Yam_N$ is continuous). By the solution of the Yamabe problem for closed manifolds, the conformal class of $h_{t_0}$ contains a metric $k_{t_0} = f^2h_{t_0}$ with scalar curvature $0$. For $t\in[0,1]$, let $k_t\define f^2h_t$ and $g_t\define k_t+\diff t^2$. Then $\Yam_N(k_t)=\Yam_N(h_t)<0$ for all $t<t_0$. Thus, for all $t<t_0$, Lemma \ref{cylinder} implies $\Yam_M(g_t)=-\infty$. On the other hand, $\Yam_M(g_{t_0})\geq 0$ because $\scal_{g_{t_0}}=0$. Every uniform $C^\infty$-neighborhood of $g_{t_0}$ contains metrics $g_t$ with $t<t_0$, because for each $r\in\N$, \,$\norm{g_t-g_{t_0}}_{C^r(g_{t_0})} = \norm{k_t-k_{t_0}}_{C^r(k_{t_0})} = \norm{f^2(h_t-h_{t_0})}_{C^r(k_{t_0})}$ tends to $0$ as $t\to t_0$. Hence $\Yam_M$ is not continuous at $g_{t_0}$ with respect to the uniform $C^\infty$-topology, and thus not continuous with respect to any uniform $C^k$-topology.
\end{proof}



\begin{thebibliography}{99}

\bibitem{Akutagawa} K. Akutagawa, \emph{Aubin's lemma for the Yamabe constants of infinite coverings and a positive mass theorem}, Math. Ann. (online first, 2011), DOI: 10.1007/s00208-011-0667-y.

\bibitem{AkutagawaBotvinnik2003} K. Akutagawa, B. Botvinnik, \emph{Yamabe metrics on cylindrical manifolds}, Geom. Funct. Anal.\ \textbf{13} (2003), 259--333.

\bibitem{AkutagawaIshidaLeBrun} K. Akutagawa, M. Ishida, C. LeBrun, \emph{Perelman's invariant, Ricci flow, and the Yamabe invariants of smooth manifolds}, Arch.\ Math.\ (Basel) \textbf{88} (2007), 71--76.

\bibitem{AmmannDahlHumbert} B. Ammann, M. Dahl, E. Humbert, \emph{Smooth Yamabe invariant and surgery}, arXiv:0804.1418.

\bibitem{Aubin} T. Aubin, \emph{\'Equations diff\'erentielles non lin\'eaires et probl\`eme de Yamabe concernant la courbure scalaire}, J.\ Math.\ Pures Appl.\ (9) \textbf{55} (1976), 269--296.

\bibitem{BerardBergery} L. B\'erard Bergery, \emph{Scalar curvature and isometry group}, in: M.\ Berger, S.\ Murakami, T. Ochiai (eds.), \emph{Spectra of Riemannian manifolds}, Kaigai Publications, Tokyo, 1983, 9--28.

\bibitem{Besse} A.~L. Besse, \emph{Einstein manifolds}, Springer-Verlag, Berlin, 1987.

\bibitem{BourbakiGT1} N. Bourbaki, \emph{General topology. Chapters 1--4}, Springer-Verlag, Berlin, 1989.

\bibitem{BrayNeves} H.~L. Bray, A. Neves, \emph{Classification of prime 3-manifolds with Yamabe invariant greater than $\RP^3$}, Ann.\ of Math.\ (2) \textbf{159} (2004), 407--424; corrigendum 887.

\bibitem{BrendleMarques} S. Brendle, F.~C. Marques, \emph{Recent progress on the Yamabe problem}, Surveys in Geometric Analysis and Relativity, Advanced Lectures in Math.\ \textbf{20}, 29--47.\comment{ arXiv:1010.4960v1.} 


\bibitem{Gromov1969} M. Gromov, \emph{Stable mappings of foliations into manifolds}, Math.\ USSR, Izv. \textbf{3} (1969), 671--694.

\bibitem{Gromov} M.\ Gromov, \emph{Partial differential relations}, Springer-Verlag, Berlin, 1986.

\bibitem{Grosse09} N. Gro{\ss}e, \emph{The Yamabe equation on manifolds of bounded geometry}, arXiv:0912.4398

\bibitem{Grosse11} N. Gro{\ss}e, \emph{The Yamabe equation on complete manifolds with finite volume}, arXiv:1111.2471

\bibitem{Hirsch1961} M.~W. Hirsch, \emph{On imbedding differentiable manifolds in euclidean space}, Ann.\ of Math.\ (2) \textbf{73} (1961), 566--571.

\bibitem{Hirsch} M.~W. Hirsch, \emph{Differential topology}, Springer-Verlag, New York, 1994.
    \newblock Corrected reprint of the 1976 original.

\bibitem{KazdanWarner} J.~L. Kazdan, F.~W. Warner, \emph{Scalar curvature and conformal deformation of Riemannian structure}, J. Diff.\ Geom.\ \textbf{10} (1975), 113--134.

\bibitem{Kim1996} S. Kim, \emph{Scalar curvature on noncompact complete Riemannian manifolds}, Nonlinear Anal. \textbf{26} (1996), 1985--1993.

\bibitem{Kim1997} \leavevmode\vrule height 2pt depth -1.6pt width 23pt, \emph{The Yamabe problem and applications on noncompact complete Riemannian manifolds}, Geom. Dedicata \textbf{64} (1997), 373--381.

\bibitem{Kim2000} \leavevmode\vrule height 2pt depth -1.6pt width 23pt, \emph{An obstruction to the conformal compactification of Riemannian manifolds}, Proc.\ Amer.\ Math.\ Soc.\ \textbf{128} (2000), 1833--1838.

\bibitem{LeeParker} J.~M. Lee, T.~H. Parker, \emph{The Yamabe problem}, Bull.\ Amer.\ Math.\ Soc.\ (N.S.) \textbf{17} (1987), 37--91.

\bibitem{PeteanRuiz} J. Petean, J.~M. Ruiz, \emph{Isoperimetric profile comparisons and Yamabe constants}, Ann.\ Global Anal.\ Geom.\ \textbf{40} (2011), 177--189.

\bibitem{SchoenYau1988} R. Schoen, S.-T. Yau, \emph{Conformally flat manifolds, Kleinian groups and scalar curvature}, Invent.\ Math.\ \textbf{92} (1988), 47--71.

\bibitem{Spring} D. Spring, \emph{Convex integration theory. Solutions to the $h$-principle in geometry and topology}, Birkh\"auser, Basel, 1998.

\end{thebibliography}
\end{document}